\numberwithin{equation}{section}
\newtheorem{theorem}{Theorem}[section]
\newtheorem{lemma}{Lemma}[section]
\newtheorem{corollary}{Corollary}[section]
\newtheorem{remark}{Remark}[section]
\newtheorem{proposition}{Proposition}[section]
\newcommand{\eqn}{\begin{eqnarray}}
\newcommand{\een}{\end{eqnarray}}
\DeclareMathOperator{\dv}{div}
\DeclareMathOperator{\curl}{curl}
 \newcommand{\norm}{\@ifstar{\@normb}{\@normi}}
 \newcommand{\@normb}[2]{\left\Vert{#1}\right\Vert_{#2}}
 \newcommand{\@normi}[2]{\Vert{#1}\Vert_{#2}}
\begin{document}

\title[Hall MHD]{On the regularity of magneto-vorticity field and the global existence for the Hall magnetohydrodynamic equations}

\author{Hantaek Bae}
\address{Department of Mathematical Sciences, Ulsan National Institute of Science and Technology (UNIST), Republic of Korea}
\email{hantaek@unist.ac.kr}

\author{Kyungkeun Kang}
\address{Department of Mathematics, Yonsei University, Republic of Korea}
\email{kkang@yonsei.ac.kr} 

\author{Jaeyong Shin}
\address{Department of Mathematics, Yonsei University, Republic of Korea}
\email{sinjaey@yonsei.ac.kr}

\date{\today}
\keywords{Hall MHD; Magneto-vorticity; regularity criterion; Global well-posedness}
\subjclass[2010]{35Q35, 35Q85, 76W05}

\begin{abstract}
In this paper, we investigate the incompressible viscous and resistive Hall magnetohydrodynamic equations (Hall MHD in short). We first study the regularity of the magneto-vorticity field $B+\omega$. In three dimensions, we derive some bounds of $B+\omega$ under a condition of the velocity field $u$. Moreover, if we consider the Hall MHD with 2D variables, the uniform-in-time bounds of $B+\omega$ come from the three dimensional case. The regularity of $B+\omega$ gives us a crucial clue of blow-up scenario and provides conditions of the existence of global-in-time  solutions. In particular, we prove the global well-posedness of the Hall MHD (also the electron MHD) with 2D variables  when the third  component of the initial current density $J_0=\nabla\times B_0$ is sufficiently small. We also derive temporal decay rate of $B+\omega$.
\end{abstract}

\maketitle

\vspace{-1ex}

\section{Introduction}
In this paper, we investigate the incompressible viscous and resistive Hall magnetohydrodynamic equations (Hall MHD in short):
\begin{subequations}\label{H MHD}
\begin{align}
&\partial_{t} u-\nu\Delta u+u\cdot\nabla u-(\nabla \times  B)\times B+\nabla p=0, \label{H MHD a}\\
&\partial_{t} B-\eta\Delta B+u\cdot\nabla B-B\cdot\nabla u+h\nabla \times ((\nabla \times  B)\times B)=0,  \label{H MHD b}\\ 
&\dv u=0,\quad \dv B=0,
\end{align}
\end{subequations}
where $u$ is the  velocity field, $p$ is the pressure, and $B$ is the magnetic field. The term $(\nabla \times  B)\times B$ is called the Lorentz force and  $\nabla \times ((\nabla \times  B)\times B)$ is called the Hall term.   The positive constants  $(\nu, \eta, h)$ and called the viscosity, the resistivity, and  the Hall  constants, respectively.  The Hall term is introduced by \cite{Lighthill} to add the influence of the electric current in the Lorentz force.

The Hall MHD is important in describing many physical phenomena \cite{Balbus, Forbes, Homann, Lighthill, Mininni, Shalybkov, Shay, Turner, Wardle}. The Hall MHD has been actively studied mathematically: see \cite{Acheritogaray, Bae Kang, Chae Degond Liu, Chae Lee, Chae Schonbek, Chae Wan Wu, Chae Weng, Chae Wolf, Chae Wolf 1, Chae Wolf 2, Dai 1, Dai 2, Dai 3, Danchin Tan 2, Danchin Tan 3, Jeong Oh, Kwak, Yamazaki} and references therein.

In this paper, we consider the Hall MHD with three and two dimensional space variables. The goal of this paper is threefold: (i) regularity of the magneto-vorticity field $B+h\nabla\times u$ and its applications; (ii) global existence of the Hall MHD with two dimensional variables when the third component of the initial current density $J_0=\nabla\times B_0$ is sufficiently small; (iii) temporal decay rates of the magneto-vorticity $B+\omega$ in $L^{2}(\mathbb{R}^{3})$. For (i) and (ii), we set initial data in $H^{d}(\mathbb{R}^d)$, $d=2,3$, which are not optimal to construct solutions to the Hall MHD, but we use these spaces for simplicity.

\subsection{Magneto-vorticity field $B+h\omega$} \label{sec:1.1}
The starting point of this paper is to study the regularity of the magneto-vorticity field $B+h\omega$, where $\omega=\nabla \times  u$. To do so, we first derive the equation of $B+h\omega$. By taking the curl operator to $h$(\ref{H MHD a}) and adding this to (\ref{H MHD b}), we can remove the Hall term. Consequently, we arrive at the following equations
\eqn \label{eq:B+omega}
\begin{split}
&\partial_{t}(B+h\omega)-\nu \Delta (B+h\omega)+u\cdot \nabla (B+h\omega) -(B+h\omega)\cdot \nabla u=(\eta-\nu)\Delta B,\\
& \dv (B+h\omega)=0.
\end{split}
\een
Since the argument used in this paper does not depend on the size of $h$, we set $h=1$. When $\nu=\eta>0$, the right-hand side of (\ref{eq:B+omega}) disappears and thus $B+\omega$ looks like the vorticity form of the incompressible Navier-Stokes equations. So we can expect more regularity of $B+\omega$ than each of $B$ and $\omega$ when $\nu=\eta>0$.

The quantity $B+\omega$, which is called the magneto-vorticity field \cite{Poly}, is already used in several mathematical work to study (\ref{H MHD}): for example, regularity and partial regularity when $\nu=\eta>0$ \cite{Chae Wolf, Chae Wolf 1, Chae Wolf 2}; small data global existence with 2D variables when $\nu=\eta>0$ \cite{Danchin Tan 3};  singularity formation with axisymmetry \cite{Chae Weng} and ill-posedness \cite{Jeong Oh} when $\nu=\eta=0$. The condition $\nu=\eta>0$  is also used in many numerical simulations \cite{Meyrand, Mininni}.

\subsubsection{\bf Regularity of $B+\omega$ in 3D}
We first recall the energy inequality
\begin{equation}\label{eq:energy-ineq}
\norm{u(t)}{L^2}^{2}+\norm{B(t)}{L^2}^{2}+2\nu\int^{t}_{0}\norm{\nabla u(\tau)}{L^2}^{2}\,d\tau+2\eta\int^{t}_{0}\norm{\nabla B(\tau)}{L^2}^{2}\,d\tau\leq \norm{u_{0}}{L^2}^{2}+\norm{B_{0}}{L^2}^{2}=\mathcal{E}_0. 
\end{equation}
The local well-posedness of \eqref{H MHD} with initial data in $H^3(\mathbb{R}^{3})$  is proved in \cite{Chae Degond Liu}. Moreover, it is shown in \cite{Chae Lee} that the maximal existence time $T^*<\infty$ and so 
\[
\limsup_{t\nearrow T^*}\left(\norm{u(t)}{H^3}^2+\norm{B(t)}{H^3}^2\right)=\infty
\]
if and only if 
\begin{equation}\label{eq:Chae-Lee}
\int^{T^*}_{0}\left(\norm{u(t)}{\text{BMO}}^{2}+\norm{\nabla B(t)}{\text{BMO}}^{2}\right)\,dt=\infty,
\end{equation}
where BMO is the space of functions of bounded mean oscillation \cite{John Nirenberg}. The first result of this paper is about a condition of $u$ to bound $B+\omega$ up to the existence time of $(u,B)$. In fact, the condition (\ref{eq:regularity-cond}) below is same to the blow-up condition of the incompressible Navier-Stokes equations \cite{Kozono Taniuchi}.

\begin{theorem}\label{Theorem 1.1}\upshape
Let $\nu,\eta>0$ and $(u_{0},B_{0})\in H^{3}(\mathbb{R}^3)$ with $\dv u_0=\dv B_0=0$. Let $(u,B)\in C([0,T);H^{3}(\mathbb{R}^3))$ be the solution  of \eqref{H MHD}. If $u$  satisfies
\begin{equation} \label{eq:regularity-cond}
U_T= \int^{T}_{0}\norm{u(t)}{\text{BMO}}^{2}\,dt<\infty,
\end{equation}
then we have the following bounds:
\begin{equation}\label{eq:bound-1}
\begin{split}
&\sup_{t\in[0,T)}\norm{(B+\omega)(t)}{L^2}^{2}+\nu\int^{T}_{0}\norm{\nabla(B+\omega)(t)}{L^{2}}^{2}\,dt \\
&\leq \left(\norm{B_{0}+\omega_{0}}{L^2}^{2}+C\mathcal{E}_0\frac{(\nu-\eta)^{2}}{\nu\eta} \right)\exp{\left(\frac{CU_T}{\nu}\right)}= \mathcal{E}_1(U_T),
\end{split}
\end{equation}
\begin{equation} \label{eq:bound-2}
\sup_{t\in[0,T)}\left\|\omega(t)\right\|^{2}_{L^{2}}+\min{(\nu,\eta)}\int^{T}_{0}\left\|\nabla\omega(t)\right\|^{2}_{L^{2}}\,dt \leq \mathcal{E}_1(U_T)+\mathcal{E}_0=\mathcal{E}_2(U_T).
\end{equation}
Moreover, if $\nu=\eta$, then $B+\omega$ also satisfies
\begin{equation}\label{eq:bound-3}
\sup_{t\in[0,T)}\left\|(B+\omega)(t)\right\|^{2}_{H^{2}}+\nu\int^{T}_{0}\left\|\nabla(B+\omega)(t)\right\|^{2}_{H^{2}}\,dt\leq \norm{B_{0}+\omega_{0}}{H^2}^{2}\exp\left(\frac{C\mathcal{E}_2(U_T)}{\nu^2}\right).
\end{equation}
\end{theorem}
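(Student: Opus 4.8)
The plan is to base all three bounds on the magneto-vorticity equation (\ref{eq:B+omega}) (set $h=1$ and write $W=B+\omega$), running three nested energy estimates, each feeding the next.

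For (\ref{eq:bound-1}) I would pair (\ref{eq:B+omega}) with $W$ in $L^{2}(\mathbb{R}^{3})$. The transport term vanishes since $\dv u=0$, leaving
\[
\tfrac{1}{2}\tfrac{d}{dt}\|W\|_{L^{2}}^{2}+\nu\|\nabla W\|_{L^{2}}^{2}=\int(W\cdot\nabla u)\cdot W\,dx+(\eta-\nu)\int\Delta B\cdot W\,dx .
\]
The last term equals $-(\eta-\nu)\int\nabla B:\nabla W$, which Young's inequality turns into $\tfrac{\nu}{4}\|\nabla W\|_{L^{2}}^{2}+C\tfrac{(\eta-\nu)^{2}}{\nu}\|\nabla B\|_{L^{2}}^{2}$; since $\int_{0}^{T}\|\nabla B\|_{L^{2}}^{2}\le\mathcal{E}_{0}/(2\eta)$ by (\ref{eq:energy-ineq}), this produces exactly the $\mathcal{E}_{0}(\nu-\eta)^{2}/(\nu\eta)$ contribution. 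The heart of the matter is the stretching term. Integrating by parts and using $\dv W=0$,
\[
\int(W\cdot\nabla u)\cdot W\,dx=-\sum_{i}\int u_{i}\,\big(W\cdot\nabla W_{i}\big)\,dx .
\]
For each $i$ the field $W$ is divergence free and $\nabla W_{i}$ is curl free, so by the div-curl lemma of Coifman--Lions--Meyer--Semmes $W\cdot\nabla W_{i}$ lies in the Hardy space $\mathcal{H}^{1}(\mathbb{R}^{3})$ with $\|W\cdot\nabla W_{i}\|_{\mathcal{H}^{1}}\le C\|W\|_{L^{2}}\|\nabla W_{i}\|_{L^{2}}$; then $\mathcal{H}^{1}$--$\text{BMO}$ duality yields
\[
\Big|\int(W\cdot\nabla u)\cdot W\,dx\Big|\le C\|u\|_{\text{BMO}}\|W\|_{L^{2}}\|\nabla W\|_{L^{2}}\le\tfrac{\nu}{4}\|\nabla W\|_{L^{2}}^{2}+\tfrac{C}{\nu}\|u\|_{\text{BMO}}^{2}\|W\|_{L^{2}}^{2}.
\]
Absorbing the two dissipative terms and applying Gr\"onwall's inequality with the $L^{1}_{t}$ weight $\tfrac{C}{\nu}\|u\|_{\text{BMO}}^{2}$ gives the supremum bound in (\ref{eq:bound-1}); integrating the differential inequality once more yields the $\nu\int_{0}^{T}\|\nabla W\|_{L^{2}}^{2}$ part. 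Then (\ref{eq:bound-2}) is immediate from $\omega=W-B$, the triangle inequalities $\|\omega\|_{L^{2}}\le\|W\|_{L^{2}}+\|B\|_{L^{2}}$ and $\|\nabla\omega\|_{L^{2}}\le\|\nabla W\|_{L^{2}}+\|\nabla B\|_{L^{2}}$, together with (\ref{eq:energy-ineq}) and (\ref{eq:bound-1}).

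For (\ref{eq:bound-3}) the decisive feature is that when $\nu=\eta$ the right-hand side of (\ref{eq:B+omega}) disappears, so $W$ solves the Navier--Stokes-type vorticity equation $\partial_{t}W-\nu\Delta W+u\cdot\nabla W=W\cdot\nabla u$ with $\dv W=\dv u=0$ and no forcing to propagate. I would run an $H^{2}$ energy estimate: apply $\partial^{\alpha}$ for $|\alpha|\le 2$, pair with $\partial^{\alpha}W$, and sum. The transport part leaves only commutators (the top piece $\int u\cdot\nabla\partial^{\alpha}W\cdot\partial^{\alpha}W$ vanishes). After integrating by parts so that the highest derivative always falls on $W$ and is absorbed into $\nu\|\nabla W\|_{H^{2}}^{2}$, and after Gagliardo--Nirenberg interpolation in $\mathbb{R}^{3}$, each trilinear term is bounded by $\tfrac{\nu}{2}\|\nabla W\|_{H^{2}}^{2}+g(t)\|W\|_{H^{2}}^{2}$ with a weight $g$ built only from $\|\nabla u\|_{L^{2}}$, $\|\nabla\omega\|_{L^{2}}$ and $\|\nabla W\|_{L^{2}}$; in particular $\|D^{3}u\|_{L^{2}}$ never appears, because the borderline term $\int(W\cdot\nabla\partial^{\alpha}u)\cdot\partial^{\alpha}W$ is rewritten as $-\int\partial^{\alpha}u\cdot(W\cdot\nabla)\partial^{\alpha}W$, pairing $\partial^{\alpha}u\sim\nabla\omega$ against the dissipation. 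Since $\int_{0}^{T}g\,dt$ is controlled by a multiple of $\mathcal{E}_{2}(U_{T})/\nu^{2}$ via (\ref{eq:energy-ineq}) and (\ref{eq:bound-1})--(\ref{eq:bound-2}), Gr\"onwall's inequality delivers (\ref{eq:bound-3}).

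The main obstacle is the vortex-stretching term in the first step: one must replace the naive bound $\|\nabla u\|_{L^{\infty}}\|W\|_{L^{2}}^{2}$ by one involving only $\|u\|_{\text{BMO}}$, and this is exactly where the compensated-compactness structure ($\dv W=0$, $\curl\nabla W_{i}=0$) and Hardy--$\text{BMO}$ duality enter. Once that is available, (\ref{eq:bound-2}) is bookkeeping and (\ref{eq:bound-3}) is a by-now-standard parabolic $H^{2}$ estimate whose only care is arranging the integrations by parts so that no norm of $B$ or $u$ beyond those already controlled ever enters.
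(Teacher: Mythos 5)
Your proposal is correct and follows essentially the same route as the paper: the $L^2$ estimate for $B+\omega$ hinges on rewriting the stretching term as $-\int u\cdot\big((B+\omega)\cdot\nabla(B+\omega)\big)$ and invoking the Coifman--Lions--Meyer--Semmes div-curl lemma together with $\mathcal{H}^1$--BMO duality, exactly as in the paper, with the $(\eta-\nu)\Delta B$ term absorbed via Young's inequality and the energy inequality; \eqref{eq:bound-2} is the same bookkeeping, and your $H^2$ estimate for \eqref{eq:bound-3} (testing with $\partial^{\alpha}$, $|\alpha|\le 2$, versus the paper's $-\Delta$ and $\Delta^{2}$ multipliers) is an equivalent implementation producing the same weight $\tfrac{C}{\nu}(\|\nabla u\|_{L^2}^2+\|\Delta u\|_{L^2}^2)$ controlled by $\mathcal{E}_2(U_T)$.
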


\begin{remark} \label{Remark 1.1} \upshape
There are several remarks in order.
\begin{enumerate}
\item \eqref{eq:regularity-cond} can be replaced by Serrin-type conditions with a minor modification:
\[
U_T=\int^{T}_{0}\norm{u(t)}{L^p}^q\,dt<\infty\quad\text{with}\quad \frac{3}{p}+\frac{2}{q}\leq 1. 
\]
\item \eqref{eq:regularity-cond} is a necessary and sufficient condition of the regularity criterion: when \eqref{eq:bound-2} holds, 
\[
\int^{T}_{0}\norm{u(t)}{\text{BMO}}^{2}\,dt\leq C\int^{T}_{0}\norm{u(t)}{\dot{H}^{\frac{3}{2}}}^{2}\,dt\leq C\int^{T}_{0}\norm{\nabla u(t)}{L^2}\norm{\nabla\omega(t)}{L^2}\,dt<\infty
\]
by (\ref{BMO embedding}), (\ref{curl-Lp}) and (\ref{Interpolation}). However, \eqref{eq:regularity-cond} does not guarantee that the $(u,B)$ can be extended beyond  $t=T$. Nevertheless, Theorem \ref{Theorem 1.1} provides more information about the solution at blow-up time $t=T^*$. In fact, Theorem \ref{Theorem 1.1} with \eqref{eq:Chae-Lee} implies that if $U_{T^{\ast}}=\infty$, then
\begin{align*}
& \sup_{t\in[0,T^*)}\norm{(B+\omega)(t)}{L^2}^{2}+\nu\int^{T^*}_{0}\norm{\nabla(B+\omega)(t)}{L^{2}}^{2}\,dt=\infty,\\
& \sup_{t\in[0,T^*)}\left\|\omega(t)\right\|^{2}_{L^{2}}+\min{(\nu,\eta)}\int^{T^*}_{0}\left\|\nabla\omega(t)\right\|^{2}_{L^{2}}\,dt=\infty.
\end{align*}
On the other hand, if 
\[
U_{T^*}=\int^{T^*}_{0}\norm{u(t)}{\text{BMO}}^2\,dt<\infty \quad\text{and}\quad \int^{T^*}_{0}\norm{\nabla B(t)}{\text{BMO}}^2\,dt=\infty,
\]
(\ref{eq:bound-1}) and (\ref{eq:bound-2}) still hold until $t=T^*$ even if $(u,B)$ blows up at $t=T^*$. If $\nu=\eta>0$, we can  show the blow-up criteria only in terms of $u$ by using \eqref{eq:bound-3}: $T^*<\infty$ if and only if
\[
\int^{T^*}_{0}\norm{\nabla \omega(t)}{\text{BMO}}^2\,dt=\infty,
\]
which is an alternative proof of the result in \cite{Ye}. 
\item When $\nu=\eta>0$, Theorem \ref{Theorem 1.1} also implies 
\[
\sup_{t\in[0,T)}\left\|\partial_{t}(B+\omega)(t)\right\|_{L^{2}} \leq C(\nu+\sqrt{\mathcal{E}_2(U_T)}) \norm{B_{0}+\omega_{0}}{H^2}\exp\left(\frac{C\mathcal{E}_2(U_T)}{\nu^2}\right).
\]
\item We now consider \eqref{H MHD} with $\nu=\eta=0$. Then, we can find a similar regularity condition even though \eqref{H MHD} is not locally well-posed in general  \cite{Jeong Oh}. Formally, if
\[
A(T)=\int^{T}_0\norm{\nabla u(t)}{L^{\infty}}\,dt<\infty,
\]
then the classical solution $(u,B)$ of \eqref{H MHD} (if it exists) satisfies
\[
\begin{aligned}
\sup_{t\in[0,T)} \norm{(B+\omega)(t)}{L^p} &\leq \norm{B_0+\omega_0}{L^p} e^{CA(T)},  \  \ 2\leq p\leq \infty,\\
\sup_{t\in[0,T)}\norm{\omega(t)}{L^2}^2 &\leq \norm{B_0+\omega_0}{L^2}^2e^{CA(T)}+\norm{u_0}{L^2}^2+\norm{B_0}{L^2}^2.
\end{aligned}
\]
\end{enumerate} 
\end{remark}

A natural question that arises from Theorem \ref{Theorem 1.1} is whether or not $U_T$ in \eqref{eq:regularity-cond} can be bounded uniformly in time. There are two approaches to answer this question: (1) smallness condition of $B_0+\omega_0$; (2) the dimension reduction. We will deal with the second case in Section \ref{sec:1.1.2} (Theorem \ref{Theorem 1.2}). We now deal with the first case. More precisely, we are finding initial data with small $B_{0}+\omega_{0}$ from which \eqref{eq:bound-1}, \eqref{eq:bound-2} and \eqref{eq:bound-3} hold until $t=T^*$ when {$\frac{|\nu-\eta|}{\nu+\eta}$ is sufficiently small.}

\begin{corollary} \label{Corollary 1.1} \upshape
Let $\nu,\eta>0$ satisfy $\frac{|\nu-\eta|}{\nu+\eta}\leq \frac{1}{8}$ and let $(u_{0},B_{0})\in H^{3}(\mathbb{R}^3)$ with $\dv u_0=\dv B_0=0$. Let $(u,B)\in C([0,T^*);H^{3}(\mathbb{R}^3))$ be the  solution of \eqref{H MHD}. If $(u_0,B_0)$ satisfies
\eqn \label{eq:cor-cond}
C_0=\left(\norm{B_0+\omega_0}{L^2}^{2}+C\mathcal{E}_0\frac{(\nu-\eta)^2}{(\nu+\eta)^2}\right) \exp\left(\frac{C(\mathcal{E}_0+\mathcal{E}_0^2)}{(\nu+\eta)^{4}}\right)<1,
\een
then 
\eqn \label{eq:cor:bound}
\sup_{t\in[0,T^*)}\norm{(B+\omega)(t)}{L^2}^2+\frac{\nu+\eta}{2}\int^{T^*}_0\norm{\nabla(B+\omega)(t)}{L^2}^2\,dt \leq C_0.
\een
\end{corollary}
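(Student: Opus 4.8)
The goal of Corollary \ref{Corollary 1.1} is to bootstrap the conditional bound of Theorem \ref{Theorem 1.1} into an unconditional one, under a smallness assumption on $B_0+\omega_0$ (together with $\frac{|\nu-\eta|}{\nu+\eta}$ small). The plan is to run a continuity argument on the interval $[0,T^*)$. I would set
\[
\Sigma(T)=\sup_{t\in[0,T]}\norm{(B+\omega)(t)}{L^2}^2+\frac{\nu+\eta}{2}\int^{T}_0\norm{\nabla(B+\omega)(t)}{L^2}^2\,dt,
\]
and let $\mathcal T=\{T\in[0,T^*): \Sigma(T)\le 1\}$. This set is nonempty (it contains $0$, since $C_0<1$ forces $\norm{B_0+\omega_0}{L^2}^2<1$) and relatively closed in $[0,T^*)$ by continuity of $\Sigma$. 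I would then show it is open, which forces $\mathcal T=[0,T^*)$ and hence $\Sigma(T^*)\le C_0$ after re-inserting the sharper constant from the estimate.

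The heart of the matter is the energy estimate for $B+\omega$ coming from \eqref{eq:B+omega}. Pairing \eqref{eq:B+omega} with $B+\omega$ in $L^2$, the transport term drops by $\dv u=0$, the dissipation gives $\nu\norm{\nabla(B+\omega)}{L^2}^2$, and the two problematic terms are the stretching term $\int (B+\omega)\cdot\nabla u\cdot(B+\omega)$ and the forcing $(\eta-\nu)\int\Delta B\cdot(B+\omega)$. For the stretching term I would bound $\big|\int (B+\omega)\cdot\nabla u\cdot(B+\omega)\big|\lesssim \norm{u}{\dot H^{3/2}}\norm{B+\omega}{L^2}\norm{\nabla(B+\omega)}{L^2}$ (via the $\dot H^{3/2}\hookrightarrow\mathrm{BMO}$-type estimate used in Remark \ref{Remark 1.1}(2), i.e. (\ref{BMO embedding}), (\ref{curl-Lp}), (\ref{Interpolation})), then use $\norm{u}{\dot H^{3/2}}\lesssim\norm{\nabla u}{L^2}^{1/2}\norm{\omega}{\dot H^{1}}^{1/2}$ together with $\omega=(B+\omega)-B$ to write $\norm{\nabla\omega}{L^2}\le\norm{\nabla(B+\omega)}{L^2}+\norm{\nabla B}{L^2}$. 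The forcing term is handled by $|\eta-\nu|\,\norm{\nabla B}{L^2}\norm{\nabla(B+\omega)}{L^2}$. After several applications of Young's inequality, one absorbs all occurrences of $\norm{\nabla(B+\omega)}{L^2}^2$ into a fraction (say half) of the dissipation $\frac{\nu+\eta}{2}\norm{\nabla(B+\omega)}{L^2}^2$; the price is a factor of the form $\frac{C}{\nu+\eta}\norm{\nabla u}{L^2}^2\Sigma(T)$ hitting $\Sigma$ itself on the right-hand side (this is where the bootstrap hypothesis $\Sigma\le1$ enters, since it lets us treat that factor as a coefficient), plus inhomogeneous terms controlled by $\frac{(\nu-\eta)^2}{(\nu+\eta)^2}\int\norm{\nabla B}{L^2}^2$ and by $\norm{\nabla u}{L^2}^2\norm{\nabla B}{L^2}^2$.

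Feeding this into Gr\"onwall and using the energy inequality \eqref{eq:energy-ineq} to bound $\int_0^{T^*}\norm{\nabla u}{L^2}^2\le\frac{\mathcal E_0}{2\nu}\lesssim\frac{\mathcal E_0}{\nu+\eta}$ and $\int_0^{T^*}\norm{\nabla B}{L^2}^2\lesssim\frac{\mathcal E_0}{\nu+\eta}$, together with $\int_0^{T^*}\norm{\nabla u}{L^2}^2\norm{\nabla B}{L^2}^2\le\sup_t\norm{\nabla\cdots}{}$-type crude bounds giving $\lesssim\frac{\mathcal E_0^2}{(\nu+\eta)^2}$, one obtains
\[
\Sigma(T)\le \Big(\norm{B_0+\omega_0}{L^2}^2+C\mathcal E_0\tfrac{(\nu-\eta)^2}{(\nu+\eta)^2}\Big)\exp\!\Big(\tfrac{C(\mathcal E_0+\mathcal E_0^2)}{(\nu+\eta)^4}\Big)=C_0<1
\]
for all $T\in\mathcal T$. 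In particular $\Sigma(T)\le C_0<1$ strictly, which opens $\mathcal T$; by connectedness $\mathcal T=[0,T^*)$ and \eqref{eq:cor:bound} follows (with the slightly weaker dissipation constant $\frac{\nu+\eta}{2}$ claimed, since we only recovered half the dissipation in the absorption step). The main obstacle is bookkeeping the powers of $\nu+\eta$: one must be careful that when $\frac{|\nu-\eta|}{\nu+\eta}\le\frac18$ the constants in front of $\norm{\nabla(B+\omega)}{L^2}^2$ that get absorbed are genuinely dominated by $\frac{\nu+\eta}{2}$ uniformly (not just by $\min(\nu,\eta)$), and that the homogeneous coefficient multiplying $\Sigma$ in the differential inequality, after using $\Sigma\le1$, still integrates to the exponent $\frac{C(\mathcal E_0+\mathcal E_0^2)}{(\nu+\eta)^4}$ and not something larger. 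Keeping the stretching term's $\norm{\nabla\omega}{L^2}$ expressed through $B+\omega$ rather than through $\omega$ directly is what makes the nonlinearity close on the single quantity $\Sigma$.
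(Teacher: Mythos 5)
Your overall architecture is sound and matches the paper's in spirit: both proofs are bootstrap arguments keyed to the hypothesis $C_0<1$, with the smallness of $\norm{B+\omega}{L^2}^2$ propagated forward to tame a superlinear term in the energy inequality for \eqref{eq:B+omega}. Where you genuinely diverge is in the treatment of the stretching term and in how the bootstrap is packaged. The paper does \emph{not} integrate by parts or use the BMO--Hardy duality here; it estimates $\int((B+\omega)\cdot\nabla u)\cdot(B+\omega)$ directly by H\"older as $\norm{B+\omega}{L^6}\norm{B+\omega}{L^3}\norm{\nabla u}{L^2}$, uses \eqref{Sobolev inequalties a}, \eqref{Interpolation} and $\norm{\nabla u}{L^2}\lesssim\norm{B+\omega}{L^2}+\norm{B}{L^2}$ on half a power of $\norm{\nabla u}{L^2}$, and ends up with the superlinearity sitting in front of the dissipation, as $\frac{\nu+\eta}{16}\norm{B+\omega}{L^2}^{2/3}\norm{\nabla(B+\omega)}{L^2}^2$; the bootstrap is then delegated to Lemma \ref{Lemma 2.2}, whose hypothesis $(1+\beta(1-X^\alpha))D$ is tailored to exactly this structure. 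Your route (integration by parts, \eqref{Hardy 1}--\eqref{BMO 1}, \eqref{BMO embedding}, then $\norm{u}{\dot H^{3/2}}\lesssim\norm{\nabla u}{L^2}^{1/2}\norm{\nabla\omega}{L^2}^{1/2}$ and the triangle inequality on $\nabla\omega$) instead pushes the superlinearity into the Gr\"onwall coefficient, as a term of the form $\frac{C}{(\nu+\eta)^3}\norm{\nabla u}{L^2}^2 X^2$, and your hand-rolled continuity argument plays the role of Lemma \ref{Lemma 2.2}. Both mechanisms close for the same reason ($X\le 1$ on the bootstrap interval), and your version has the virtue of reusing the Theorem \ref{Theorem 1.1} machinery verbatim.

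There is, however, one step you flag but do not actually carry out, and it is precisely the step where the hypothesis $\frac{|\nu-\eta|}{\nu+\eta}\le\frac18$ is consumed: your sketch never uses that hypothesis. Pairing \eqref{eq:B+omega} with $B+\omega$ yields dissipation $\nu\norm{\nabla(B+\omega)}{L^2}^2$, not $\frac{\nu+\eta}{2}\norm{\nabla(B+\omega)}{L^2}^2$, and when $\eta>\nu$ one has $\frac{\nu+\eta}{2}>\nu$, so no amount of absorption starting from $\nu$ can produce the coefficient $\frac{\nu+\eta}{2}$ claimed in \eqref{eq:cor:bound}. The paper fixes this by rewriting the equation as \eqref{eq:B+omega3}, splitting $\nu\Delta(B+\omega)=\frac{\nu+\eta}{2}\Delta(B+\omega)+\frac{\nu-\eta}{2}\Delta(B+\omega)$ and using $|\nu-\eta|\le\frac{\nu+\eta}{8}$ to absorb the extra piece $\frac{|\nu-\eta|}{2}\norm{\nabla(B+\omega)}{L^2}^2$ into a small fraction of the symmetrized dissipation; the bookkeeping is arranged so that at least $\frac{\nu+\eta}{2}\norm{\nabla(B+\omega)}{L^2}^2$ survives whenever $X\le1$. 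You need to insert this symmetrization explicitly. A secondary, cosmetic issue: your cross term $\norm{\nabla u}{L^2}^{1/2}\norm{\nabla B}{L^2}^{1/2}\norm{B+\omega}{L^2}\norm{\nabla(B+\omega)}{L^2}$ contributes a Gr\"onwall coefficient integrating to $\frac{C\mathcal{E}_0}{(\nu+\eta)^2}$, which is not literally of the form $\frac{C(\mathcal{E}_0+\mathcal{E}_0^2)}{(\nu+\eta)^4}$ appearing in \eqref{eq:cor-cond}; since the exact shape of $C_0$ is part of the statement, you would either need to massage this exponent or accept a slightly different smallness condition. Neither point invalidates the strategy, but both must be addressed for the proof to deliver the corollary as stated.
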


As one can see in Remark \ref{Remark 1.1} (2), $U_{T^{\ast}}$ is uniformly bounded by \eqref{eq:energy-ineq} and \eqref{eq:cor:bound}
\[
\begin{split}
U_{T^*} \leq C \int^{T^*}_0 \norm{\nabla u(t)}{L^2} \norm{\nabla \omega(t)}{L^2}\,dt \leq C\left(\frac{\mathcal{E}_0}{\nu}\right)^{\frac{1}{2}}\left(\frac{C_0}{\nu+\eta}\right)^{\frac{1}{2}}.
\end{split}
\]

\begin{remark} \label{Remark 1.2} \upshape
\noindent
\begin{enumerate}
\item In many electrically conducting fluids of physical interest, both the Reynolds number $Re=\nu^{-1}$ and the magnetic Reynolds number $Rm=\eta^{-1}$ can be large and comparable. {So in this case, we see that 
\[
\frac{|\nu-\eta|}{\nu+\eta}=\frac{|Re-Rm|}{Re+Rm}
\]
is very small and thus our assumption  is reasonable. This assumption is also imposed to the usual MHD (without the Hall term in (\ref{H MHD})) in \cite{He Huang Wang}.}
\item When $\nu=\eta>0$, \eqref{eq:cor-cond} is reduced to 
\eqn \label{eq:cor-cond2}
C_0=\norm{B_0+\omega_0}{L^2}^{2} \exp\left(\frac{C}{\nu^{4}}(\mathcal{E}_0+\mathcal{E}_0^2)\right)<1.
\een 
If $(u_0,B_0,\omega_0)\in L^2$ with \eqref{eq:cor-cond2}, we can find a global-in-time weak solution of (\ref{H MHD}) satisfying \eqref{eq:cor:bound}. Moreover, we can derive temporal decay rates of $B+\omega$ using the refined Fourier splitting method introduced in \cite{Bae Jung Shin}. However, the argument of deriving such decay rates of $B+\omega$ is rather different from those in Section \ref{sec:1.1} and Section \ref{sec:1.2} and thus we will deal with decay rates of $B+\omega$ in Section \ref{sec:5}.
\item Going a little further with $\nu=\eta>0$, if we assume $B_0+\omega_0\equiv0$, then $B+\omega\equiv0$ for all $t\geq0$. Then, we can write \eqref{H MHD} only in terms of $u$ or $B$: for example, $u$ satisfies 
\[
\partial_{t}u-\nu\Delta u-((u-\Delta u)\times\omega)+\nabla\left(p+|u|^2/2\right)=0,\quad \dv u=0.
\]
Equivalently, the equations of $B$ is written as
\eqn \label{decoupled eq of B}
\partial_t B-\nu\Delta B+\nabla \times ((A+\nabla\times B)\times B)=0,\quad  \nabla \times A= B,\quad \dv  A=0
\een
which is the electron MHD equations if we neglect $\nabla \times  (A\times B)$. We remark that the global well-posedness problems of \eqref{decoupled eq of B} and the electron MHD equations are open as well.
\end{enumerate}
\end{remark}

\subsubsection{\bf Regularity of $B+\omega$ with 2D variables} \label{sec:1.1.2}
The Hall MHD with 2D variables, which is called the $2\frac{1}{2}$D Hall MHD, is described by $u=(u^{1}, u^{2}, u^{3}):[0,\infty)\times\mathbb{R}^{2}\rightarrow\mathbb{R}^{3}$ and $B=(B^{1}, B^{2}, B^{3}):[0,\infty)\times\mathbb{R}^{2}\rightarrow\mathbb{R}^{3}$ satisfying (\ref{H MHD}). To write (\ref{H MHD}) with the 2D variables, let $\widetilde{u}=(u^{1}, u^{2})$, $\widetilde{B}=(B^{1}, B^{2})$, and denote $\nabla=(\partial_{1}, \partial_{2})$, $\Delta=\partial^{2}_{1}+\partial^{2}_{2}$. Then (\ref{H MHD}) is reformulated as  
\begin{subequations}\label{H MHD New}
\begin{align}
&\partial_{t}u-\nu{\Delta} u+\widetilde{u}\cdot {\nabla} u-\widetilde{B}\cdot {\nabla} B+{\nabla} p=0, \label{H MHD New a}\\
&\partial_{t} \widetilde{B}-\eta {\Delta} \widetilde{B}+\widetilde{u}\cdot {\nabla} \widetilde{B}-\widetilde{B}\cdot {\nabla} \widetilde{u}+\widetilde{B}\cdot {\nabla} \widetilde{J}-\widetilde{J}\cdot {\nabla} \widetilde{B}=0,  \label{H MHD New b}\\ 
&\partial_{t} B^3-\eta {\Delta} B^3+\widetilde{u}\cdot {\nabla} B^3-\widetilde{B}\cdot {\nabla} u^3+\widetilde{B}\cdot {\nabla} J^3=0,  \label{H MHD New c}\\ 
&\dv u=0,\quad \dv B=0,
\end{align}
\end{subequations}
where $\widetilde{J}=(\partial_{2}B^{3}, -\partial_{1}B^{3})$ and $J^3=\partial_1 B^2-\partial_2 B^1$.  Compared to the $2\frac{1}{2}$D incompressible Navier-Stokes equations where the equations of $\widetilde{u}$ and $u^3$ are decoupled and so globally well-posed with smooth initial data, the same kind of decoupling  does not happen to (\ref{H MHD New}) due to the Hall term. Thus, the global well-posedness of the $2\frac{1}{2}$D Hall MHD is still a  challenging problem.

Parallel to the three dimensional case, we first deal with $B+\omega$.  We see that $U_T$ in \eqref{eq:regularity-cond} is bounded uniformly in time: by \eqref{eq:energy-ineq} and (\ref{BMO embedding}) with $d=2$
\[
\int^{T}_{0}\norm{u(t)}{\text{BMO}}^2\,dt\leq C\int^{T}_{0}\norm{\nabla u(t)}{L^2}^2\,dt\leq \frac{C\mathcal{E}_0}{\nu}
\]
for all $T\geq0$. Hence, we can state the following theorem without an additional assumption on $u$.

\begin{theorem}\label{Theorem 1.2}\upshape
Let $\nu,\eta>0$ and $(u_{0},B_{0})\in H^{2}(\mathbb{R}^2)$ with $\dv u_0=\dv  B_0=0$. There exists a unique solution $(u,B)\in C([0,T^*);H^2(\mathbb{R}^2))$ of \eqref{H MHD New} which has the following uniform-in-time bounds:
\[
\begin{split}
&\sup_{t\in [0,T^*)}\norm{(B+\omega)(t)}{L^2}^{2}+\nu\int^{T^*}_{0}\norm{\nabla(B+\omega)(t)}{L^{2}}^{2}\,dt\\
&\leq \left(\norm{B_{0}+\omega_{0}}{L^2}^{2}+C\mathcal{E}_0\frac{(\nu-\eta)^{2}}{\nu\eta}\right)\exp\left(\frac{C\mathcal{E}_0}{\nu^{2}}\right)= \mathcal{E}_1, 
\end{split}
\]
and
\eqn \label{eq:2.5D-bound-2}
\sup_{t\in[0,T^*)}\left\|\omega(t)\right\|^{2}_{L^{2}}+\min{(\nu,\eta)}\int^{T^*}_{0}\left\|\nabla\omega(t)\right\|^{2}_{L^{2}}\,dt\leq \mathcal{E}_{1}+\mathcal{E}_0=\mathcal{E}_2. 
\een
Moreover, if $\nu=\eta$ and $B_0+\omega_0\in H^k(\mathbb{R}^2)$ for each $k=1,2$, $B+\omega$ also satisfies
\begin{equation}\label{eq:2.5D-bound-3}
\sup_{t\in[0,T^*)}\left\|(B+\omega)(t)\right\|^{2}_{H^{k}}+\nu\int^{T^*}_{0}\left\|\nabla(B+\omega)(t)\right\|^{2}_{H^{k}}\,dt\leq \norm{B_{0}+\omega_{0}}{H^k}^{2}\exp\left(\frac{C\mathcal{E}_2}{\nu^2}\right).
\end{equation}
\end{theorem}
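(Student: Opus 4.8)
The plan is to deduce all three statements essentially from Theorem~\ref{Theorem 1.1} together with the remark that in two dimensions the hypothesis \eqref{eq:regularity-cond} is automatic. Before that one needs the local well-posedness of \eqref{H MHD New} in $H^2(\mathbb{R}^2)$ and a maximal time $T^*$; I would obtain this by the mollification/energy scheme of \cite{Chae Degond Liu} adapted to the plane, the only new point being that the top-order ($\dot H^2$) estimate for the Hall contributions $\widetilde B\cdot\nabla\widetilde J-\widetilde J\cdot\nabla\widetilde B$ and $\widetilde B\cdot\nabla J^3$ in \eqref{H MHD New} is closed using their $2\frac{1}{2}$D transport-type structure, a fractional Leibniz (commutator) estimate, and the two-dimensional embeddings $H^2(\mathbb{R}^2)\hookrightarrow W^{1,p}$ ($p<\infty$) and $H^2(\mathbb{R}^2)\hookrightarrow L^\infty$. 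I would not assert $T^*=\infty$; the theorem only claims uniform bounds on $B+\omega$ and $\omega$ up to $T^*$, which is consistent with the $2\frac{1}{2}$D global well-posedness being open.

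The key observation is that, by the energy inequality \eqref{eq:energy-ineq} and the critical embedding $\dot H^{1}(\mathbb{R}^2)\hookrightarrow\text{BMO}(\mathbb{R}^2)$ (that is, \eqref{BMO embedding} with $d=2$), for every $T<T^*$
\[
U_T=\int_0^T\norm{u(t)}{\text{BMO}}^2\,dt\leq C\int_0^T\norm{\nabla u(t)}{L^2}^2\,dt\leq\frac{C\mathcal{E}_0}{\nu},
\]
so \eqref{eq:regularity-cond} holds with $U_T$ replaced throughout by the time-independent quantity $C\mathcal{E}_0/\nu$. Since \eqref{eq:B+omega} is dimension-free, the $L^2$ estimate from the proof of Theorem~\ref{Theorem 1.1} carries over, and in 2D it even simplifies: pairing \eqref{eq:B+omega} with $Z:=B+\omega$, the transport term drops by $\dv u=0$, the stretching term satisfies $|\int (Z\cdot\nabla u)\cdot Z|\leq\norm{\nabla u}{L^2}\norm{Z}{L^4}^2\leq C\norm{\nabla u}{L^2}\norm{Z}{L^2}\norm{\nabla Z}{L^2}$ by H\"older and Ladyzhenskaya's inequality $\norm{f}{L^4}^2\leq C\norm{f}{L^2}\norm{\nabla f}{L^2}$, and the forcing $(\eta-\nu)\Delta B$ is absorbed by Young's inequality. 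Gr\"onwall's lemma with $\int_0^T\norm{\nabla u}{L^2}^2\leq\mathcal{E}_0/(2\nu)$ and $\int_0^T\norm{\nabla B}{L^2}^2\leq\mathcal{E}_0/(2\eta)$ then yields the stated $\mathcal{E}_1$ with its $\exp(C\mathcal{E}_0/\nu^2)$ factor and $\tfrac{(\nu-\eta)^2}{\nu\eta}\mathcal{E}_0$ term; the bound \eqref{eq:2.5D-bound-2} follows at once from $\norm{\omega}{L^2}\leq\norm{B+\omega}{L^2}+\norm{B}{L^2}$, $\norm{\nabla\omega}{L^2}\leq\norm{\nabla(B+\omega)}{L^2}+\norm{\nabla B}{L^2}$, and \eqref{eq:energy-ineq}.

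For \eqref{eq:2.5D-bound-3} with $\nu=\eta$ the right-hand side of \eqref{eq:B+omega} vanishes, so $Z=B+\omega$ solves $\partial_t Z-\nu\Delta Z+u\cdot\nabla Z-Z\cdot\nabla u=0$. I would perform $\dot H^k$ energy estimates for $k=1,2$: apply $\nabla^k$, pair with $\nabla^k Z$, and estimate the commutators by two-dimensional Gagliardo--Nirenberg inequalities (Ladyzhenskaya for $k=1$; $\norm{f}{L^\infty}^2\leq C\norm{\nabla f}{L^2}\norm{\nabla^2 f}{L^2}$ for $k=2$) together with the Biot--Savart bounds $\norm{\nabla u}{L^2}\lesssim\norm{\omega}{L^2}$ and $\norm{\nabla^2 u}{L^2}\lesssim\norm{\nabla\omega}{L^2}$. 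The goal is to write each cubic term as (a time-integrable weight)$\,\cdot\,\norm{Z}{H^k}^2+\tfrac{\nu}{4}\norm{\nabla Z}{H^k}^2$, where the weight is assembled only from $\norm{\nabla u}{L^2}^2$, $\norm{\nabla\omega}{L^2}^2$ and $\norm{\nabla Z}{L^2}^2$, each of which has finite integral over $[0,T^*)$ by \eqref{eq:energy-ineq}, \eqref{eq:2.5D-bound-2} and the $L^2$-bound just proved; a last Gr\"onwall step then produces the factor $\exp(C\mathcal{E}_2/\nu^2)$.

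The main obstacle is exactly this last bookkeeping. In three dimensions the analogue \eqref{eq:bound-3} must be \emph{assumed} via $\mathcal{E}_2(U_T)<\infty$, and even then the natural weights are not integrable in time, so no time-uniform $H^k$ bound is available; what makes \eqref{eq:2.5D-bound-3} go through is the genuinely planar improvement in the Gagliardo--Nirenberg exponents, which lets one peel off a full power of $\norm{\nabla Z}{L^2}$ (resp.\ $\norm{\nabla\omega}{L^2}$) into the dissipation and leaves a Gr\"onwall weight controlled by the enstrophy-type quantities of \eqref{eq:2.5D-bound-2}. A secondary, more routine difficulty is the one flagged in the first step: closing the local theory exactly at $H^2(\mathbb{R}^2)$ rather than $H^{2+\varepsilon}$, since the top-order Hall term sits at the borderline of $H^2(\mathbb{R}^2)\not\hookrightarrow W^{1,\infty}$ and one must exploit both the endpoint embedding into $W^{1,p}$, $p<\infty$, and the $2\frac{1}{2}$D structure of \eqref{H MHD New}.
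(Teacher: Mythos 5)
Your proposal follows the paper's route exactly: the paper's (essentially implicit) proof of Theorem \ref{Theorem 1.2} consists precisely of the observation that $U_{T^*}\le C\mathcal{E}_0/\nu$ by \eqref{eq:energy-ineq} and \eqref{BMO embedding} with $d=2$, after which the $L^2$, $\dot H^1$ and $\dot H^2$ estimates from the proof of Theorem \ref{Theorem 1.1} are repeated with the three-dimensional Sobolev inequalities replaced by their planar (Ladyzhenskaya/Agmon) counterparts. One small correction: the two-dimensional interpolation inequality you invoke for $k=2$ should read $\norm{f}{L^\infty}^2\le C\norm{f}{L^2}\norm{\nabla^2 f}{L^2}$ rather than $\norm{f}{L^\infty}^2\le C\norm{\nabla f}{L^2}\norm{\nabla^2 f}{L^2}$ (the latter is not scale-invariant in $\mathbb{R}^2$), and your closing remark that no time-uniform $H^k$ bound is available in 3D overstates the contrast, since \eqref{eq:bound-3} does hold there under \eqref{eq:regularity-cond}; neither point affects the validity of the argument.
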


\vspace{1ex}

As an application of Theorem \ref{Theorem 1.2}, we provide a blow-up criterion of (\ref{H MHD New}). Before that,  we give some blow-up criteria  of \eqref{H MHD New}. Let $(u,B)\in C([0,T^*);H^2 (\mathbb{R}^2))$ be a solution of \eqref{H MHD New}. Then, the maximal existence time $T^*<\infty$ if and only if 
\[
\begin{split}
\text{\cite{Chae Lee}}: &\ \int^{T^*}_{0}\norm{\nabla \times B(t)}{\text{BMO}}^{2}\,dt=\infty,\\
\text{\cite{Bae Kang, Rahman}}: & \ \int^{T^*}_{0}\norm{\nabla B^{3}(t)}{L^{r'}}^{r}\,dt=\infty \ \text{or} \ \int^{T^*}_{0}\norm{\nabla \widetilde{B}(t)}{L^{r'}}^{r}\,dt=\infty, \quad \frac{1}{r}+\frac{1}{r'}=\frac{1}{2},\ 2\leq r<\infty.
\end{split}
\]

We now provide new blow-up criteria with the aid of Theorem \ref{Theorem 1.2}. To do so, we use $L^{\infty}([0,T];L^{2})\cap L^{2}([0,T];\dot{H}^{1}) \subset X^{r}_{T}$ for all $r\in[2,\infty)$, where a function space $X^{r}_{T}$ is given by the following norm:
\[
\norm{f}{X^{r}_{T}}^r:=
	\begin{cases}	\displaystyle \int^{T}_{0}\norm{f(t)}{L^{r'}}^{r}\,dt\quad\text{when}\ \ 2<r<\infty,\ \frac{1}{r}+\frac{1}{r'}=\frac{1}{2}, \\
	\displaystyle \int^{T}_{0}\norm{f(t)}{\dot{H}^{1}}^{2}\,dt\quad\text{when}\ \ r=2.
	\end{cases}	
\]
When $\nu=\eta>0$,  we deduce that $\nabla (B+\omega)\in X^{r}_{T}$ from \eqref{eq:2.5D-bound-3} with $k=1$. Then, both $\norm{\nabla B}{X^{r}_{T}}$ and $\norm{\nabla\omega}{X^{r}_{T}}$ are bounded for all $T>0$ or blow up at the same time $T^*<\infty$. From this observation, we are able to state a blow-up criterion  in terms of any single component $B^3$, $\widetilde{B}$, $\omega^3=\partial_1 u^2-\partial_2 u^1$, and $\widetilde{\omega}=(\partial_2 u^3,-\partial_1 u^3)$.

\begin{corollary} \label{Corollary 1.2}\upshape
Let $\nu,\eta>0$ and $(u_{0},B_{0})\in H^{2}(\mathbb{R}^2)$ with $\dv {u}_0=\dv {B}_0=0$. Let $T^*$ be the maximal existence time of the solution in Theorem \ref{Theorem 1.2}. Then,  $T^*<\infty$ is equivalent to each of the followings:
\eqn \label{BW 1}
\text{(i)}\ \left\|\nabla B^{3}\right\|_{X^{r}_{T^*}}=\infty, \quad  \text{(ii)} \ \left\|\nabla \widetilde{B}\right\|_{X^{r}_{T^*}}=\infty. 
\een
In addition, if $\nu=\eta$, then $T^*<\infty$ is equivalent to each of the followings as well:
\eqn \label{BW 2}
\text{(iii)} \ \left\|\nabla \omega^3\right\|_{X^{r}_{T^*}}=\infty,\quad \text{(iv)} \ \left\|\nabla \widetilde{\omega}\right\|_{X^{r}_{T^*}}=\infty.
\een
\end{corollary}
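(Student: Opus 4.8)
The strategy is to \emph{remove the ``or''} from the blow-up criteria of Chae--Lee and Bae--Kang/Rahman quoted above: it suffices to prove that, for the solution of Theorem \ref{Theorem 1.2},
\[
\norm{\nabla B^{3}}{X^{r}_{T^{*}}}<\infty\iff \norm{\nabla\widetilde{B}}{X^{r}_{T^{*}}}<\infty .
\]
Granting this, if $\norm{\nabla B^{3}}{X^{r}_{T^{*}}}=\infty$ then the Bae--Kang/Rahman disjunction holds, so $T^{*}<\infty$; and if $T^{*}<\infty$ then the disjunction forces one of the two norms to be infinite, hence both, so $\norm{\nabla B^{3}}{X^{r}_{T^{*}}}=\infty$. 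This proves \eqref{BW 1}, and then \eqref{BW 2} follows when $\nu=\eta$: by \eqref{eq:2.5D-bound-3} with $k=1$ and the embedding $L^{\infty}([0,T^{*});L^{2})\cap L^{2}([0,T^{*});\dot{H}^{1})\subset X^{r}_{T^{*}}$ we have $\nabla(B+\omega)\in X^{r}_{T^{*}}$, so $\nabla(B^{3}+\omega^{3})$ and $\nabla(\widetilde{B}+\widetilde{\omega})$ lie in $X^{r}_{T^{*}}$; hence $\norm{\nabla\omega^{3}}{X^{r}_{T^{*}}}$ and $\norm{\nabla B^{3}}{X^{r}_{T^{*}}}$ (resp.\ $\norm{\nabla\widetilde{\omega}}{X^{r}_{T^{*}}}$ and $\norm{\nabla\widetilde{B}}{X^{r}_{T^{*}}}$) are simultaneously finite or infinite, and \eqref{BW 2} follows from \eqref{BW 1}. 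The endpoint $r=2$ needs only cosmetic changes, using the Chae--Lee criterion together with $\dot{H}^{1}(\mathbb{R}^{2})\hookrightarrow\text{BMO}$ in place of Bae--Kang/Rahman.

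For the equivalence I first record the $2$D Biot--Savart/Calderón--Zygmund dictionary: since $\dv\widetilde{B}=\dv\widetilde{u}=0$, one has $\norm{\nabla\widetilde{B}}{L^{p}}\approx\norm{J^{3}}{L^{p}}$ and $\norm{\nabla\widetilde{u}}{L^{p}}\approx\norm{\omega^{3}}{L^{p}}$ for $1<p<\infty$, while $|\widetilde{J}|=|\nabla B^{3}|$ and $|\widetilde{\omega}|=|\nabla u^{3}|$. Second, the velocity is already controlled uniformly in time: by \eqref{eq:energy-ineq}, \eqref{eq:2.5D-bound-2} and the $2$D Gagliardo--Nirenberg inequality, $\int_{0}^{T^{*}}\norm{\widetilde{u}}{L^{r'}}^{r}\,dt\lesssim\norm{\widetilde{u}}{L^{\infty}_{t}L^{2}}^{r-2}\int_{0}^{T^{*}}\norm{\nabla\widetilde{u}}{L^{2}}^{2}\,dt<\infty$ for $2<r<\infty$, and likewise $\int_{0}^{T^{*}}\norm{\nabla\widetilde{u}}{L^{4}}^{2}\,dt<\infty$. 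Now suppose $\norm{\nabla B^{3}}{X^{r}_{T^{*}}}<\infty$. Writing $\nabla^{\perp}=(\partial_{2},-\partial_{1})$ so that $\widetilde{J}=\nabla^{\perp}B^{3}$, equation \eqref{H MHD New b} becomes the \emph{linear} magnetic-field equation
\[
\partial_{t}\widetilde{B}-\eta\Delta\widetilde{B}+(\vec{a}\cdot\nabla)\widetilde{B}-(\widetilde{B}\cdot\nabla)\vec{a}=0,\qquad \vec{a}:=\widetilde{u}-\nabla^{\perp}B^{3},\qquad\dv\vec{a}=0,
\]
whose drift $\vec{a}$ lies in $L^{r}([0,T^{*});L^{r'})$ by the previous step and the hypothesis (equivalently, the flux function $\phi$ with $\widetilde{B}=\nabla^{\perp}\phi$ solves the scalar advection--diffusion equation $\partial_{t}\phi-\eta\Delta\phi+\vec{a}\cdot\nabla\phi=0$). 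A Serrin-type $L^{2}$ estimate (after $\int(\widetilde{B}\cdot\nabla)\vec{a}\cdot\widetilde{B}=-\int(\widetilde{B}\cdot\nabla)\widetilde{B}\cdot\vec{a}$, Gagliardo--Nirenberg, absorption and Grönwall) then gives $\sup_{[0,T^{*})}\norm{\widetilde{B}}{L^{2}}^{2}+\eta\int_{0}^{T^{*}}\norm{\nabla\widetilde{B}}{L^{2}}^{2}\,dt\le\norm{\widetilde{B}_{0}}{L^{2}}^{2}\exp\!\big(C_{\eta}\textstyle\int_{0}^{T^{*}}\norm{\vec{a}}{L^{r'}}^{r}\,dt\big)<\infty$. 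This does not reach $H^{1}$, however: differentiating the $\widetilde{B}$-equation produces the stretching term $(\widetilde{B}\cdot\nabla)\nabla^{\perp}B^{3}$, involving $\nabla^{2}B^{3}$, which the hypothesis on $\nabla B^{3}$ does not control.

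The main step --- and the one I expect to be the principal difficulty --- is to upgrade this to $\widetilde{B}\in L^{\infty}([0,T^{*});H^{1})\cap L^{2}([0,T^{*});H^{2})$, because then $\nabla\widetilde{B}\in L^{\infty}([0,T^{*});L^{2})\cap L^{2}([0,T^{*});\dot{H}^{1})\subset X^{r}_{T^{*}}$, which completes the proof via the known criterion. For this one runs a \emph{coupled} $\dot{H}^{1}$ estimate for the pair $(\widetilde{B},B^{3})$ --- equivalently an $L^{2}$ estimate for the full current $J=\nabla\times B=(\nabla^{\perp}B^{3},\,J^{3})$, for which $\norm{J}{L^{2}}=\norm{\nabla B}{L^{2}}$ and $\nabla\times J=-\Delta B$. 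By self-adjointness of the curl and $(B\cdot\nabla)J-(J\cdot\nabla)B=\nabla\times(J\times B)$, the Hall contribution to $\tfrac12\frac{d}{dt}\norm{J}{L^{2}}^{2}$ reduces to $-\sum_{i}\int(J\times\partial_{i}B)\cdot\partial_{i}J$, and the crucial structural point is a \emph{null structure} of this expression in the $2\frac{1}{2}$D setting: because the $\widetilde{B}$-dependent slot of $J$ (namely $J^{3}$) pairs in the cross product only with the $B^{3}$-dependent slots of $J$ (namely $\widetilde{J}=\nabla^{\perp}B^{3}$), every cubic term either is built solely from $B^{3}$ or carries at least one factor $\nabla B^{3}$ --- in particular there is no term carrying two first-order and one second-order derivative of $\widetilde{B}$ alone. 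Hence, after suitable integrations by parts, each term is bounded by $\varepsilon(\norm{\nabla^{2}\widetilde{B}}{L^{2}}^{2}+\norm{\nabla^{2}B^{3}}{L^{2}}^{2})$, absorbed by the dissipation, plus a Grönwall term $C_{\varepsilon}g(t)\big(\norm{\nabla\widetilde{B}}{L^{2}}^{2}+\norm{\nabla B^{3}}{L^{2}}^{2}\big)$ with $g\in L^{1}([0,T^{*}))$ assembled from the controlled quantities $\norm{\nabla B^{3}}{L^{r'}}^{r}$, $\norm{\nabla\widetilde{u}}{L^{4}}^{2}$, $\norm{\nabla\widetilde{u}}{L^{2}}^{2}$ and $\norm{\nabla\widetilde{B}}{L^{2}}^{2}$ (the last already integrable by the previous paragraph); the non-Hall terms are handled by standard arguments using the control of $u$ from Theorem \ref{Theorem 1.2}. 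Grönwall then yields the sought uniform $H^{1}$ bound on $[0,T^{*})$. The reverse implication $\norm{\nabla\widetilde{B}}{X^{r}_{T^{*}}}<\infty\Rightarrow\norm{\nabla B^{3}}{X^{r}_{T^{*}}}<\infty$ is proved the same way, now exploiting that $J^{3}=\partial_{1}B^{2}-\partial_{2}B^{1}$ enters \eqref{H MHD New c} as a source independent of $B^{3}$ and using the same null structure with the roles of $\widetilde{B}$ and $B^{3}$ exchanged. Combining the two implications with the Bae--Kang/Rahman criterion yields \eqref{BW 1}, and \eqref{BW 2} follows as above.
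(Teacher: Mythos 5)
Your treatment of \eqref{BW 2} coincides exactly with the paper's: since $(u_0,B_0)\in H^2(\mathbb{R}^2)$ gives $B_0+\omega_0\in H^1$, the bound \eqref{eq:2.5D-bound-3} with $k=1$ together with $L^{\infty}([0,T^*);L^2)\cap L^2([0,T^*);\dot{H}^1)\subset X^r_{T^*}$ yields $\nabla(B^3+\omega^3),\ \nabla(\widetilde{B}+\widetilde{\omega})\in X^r_{T^*}$, so (iii) is equivalent to (i) and (iv) to (ii). Where you diverge is \eqref{BW 1}: the paper does not reprove it, but simply records that the \emph{individual} criteria (i) and (ii) (not merely their disjunction) are already established in the cited works of Bae--Kang and Rahman--Yamazaki for $r>2$, the endpoint $r=2$ following from the same computations. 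You instead read the quoted result as a genuine disjunction and undertake to prove $\|\nabla B^3\|_{X^r_{T^*}}<\infty\iff\|\nabla\widetilde{B}\|_{X^r_{T^*}}<\infty$, which amounts to re-deriving the regularity criteria of those references from scratch. That is a legitimate self-contained route, and your first ($L^2$) step is sound.

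However, the structural claim on which your main step rests is false as stated. Expanding $\sum_i\int(J\times\partial_iB)\cdot\partial_iJ$ produces, among others, the terms $-J^3\,\partial_iB^2\,\partial_iJ^1$ and $J^3\,\partial_iB^1\,\partial_iJ^2$, each of the form $(\nabla\widetilde{B})(\nabla\widetilde{B})(\nabla^2B^3)$: these are neither built solely from $B^3$ nor do they carry a factor $\nabla B^3$, contradicting your dichotomy. (This is precisely the term the paper isolates in \eqref{eq:5.8} as $-2\int(\nabla\partial_kB^3\cdot\nabla^{\perp}\partial_k\psi)\Delta\psi$, and it is the critical term that forces the smallness hypothesis on $(\nabla\times B_0)^3$ in Theorems \ref{Theorem 1.3} and \ref{Theorem 1.4}.) Your scheme can still be closed under the hypothesis $\nabla B^3\in X^r_{T^*}$, but only after one further integration by parts that moves the second derivative off $B^3$, producing $\int(\nabla B^3)(\nabla\widetilde{B})(\nabla^2\widetilde{B})\le\|\nabla B^3\|_{L^{r'}}\|\nabla\widetilde{B}\|_{L^{r}}\|\nabla^2\widetilde{B}\|_{L^2}\le\varepsilon\|\nabla^2\widetilde{B}\|_{L^2}^2+C\|\nabla B^3\|_{L^{r'}}^{r}\|\nabla\widetilde{B}\|_{L^2}^2$ by Gagliardo--Nirenberg and Young, which is then amenable to Gr\"onwall. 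So the only part of your ``null structure'' that is true and needed is the absence of a $(\nabla\widetilde{B})^2\nabla^2\widetilde{B}$ term; the rest requires this extra manipulation. None of this work is demanded by the paper, whose proof of \eqref{BW 1} is a citation and whose only new content in this corollary is the one-line deduction of \eqref{BW 2}.
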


\vspace{1ex}

The conditions (\ref{BW 1}) for $r>2$ are already proved in \cite{Bae Kang, Rahman}, and the case $r=2$ is also easily deduced from the computations in \cite{Bae Kang, Rahman}. On the other hand, by Theorem \ref{Theorem 1.2}, 
\[
\left\|\nabla(B+\omega)\right\|^{r}_{X^{r}_{T^*}}=\left\|\nabla(B^{3}+\omega^3)\right\|^{r}_{X^{r}_{T^*}}+\left\|\nabla(\widetilde{B} +\widetilde{\omega})\right\|^{r}_{X^{r}_{T^*}}<\infty
\]
and so we prove \eqref{BW 2}.

\vspace{1ex}

Compared to \cite{Rahman} where the blow-up criteria are established in terms of $ \norm{\nabla\Delta\widetilde{\omega}}{X^r_{T^*}}$ when $\nu=\eta>0$, we remove two derivatives from the result in \cite{Rahman} by using Theorem \ref{Theorem 1.2}. Corollary \ref{Corollary 1.2} indicates that we can show the finite time blow-up or prove the existence of global-in-time solutions of \eqref{H MHD New} by investigating any single component of $(u,B)$.

\subsection{Global existence of the $2\frac{1}{2}$D Hall MHD} \label{sec:1.2}
In what follows, we discuss how to establish  the global existence of \eqref{H MHD New} with small initial data. There are some results of global existence of \eqref{H MHD New} with smallness assumption to initial data: $\norm{u_0}{H^1}+\norm{B_0}{H^1}$ when $\nu,\eta>0$ \cite{Bae Kang}; $\|B_{0}\|_{H^{1}}$ when $\nu =\eta$ \cite{Danchin Tan 3} and $\nu \ne \eta$ \cite{Tan}. Compared to \cite{Tan} where the global well-posedness of \eqref{H MHD New} with small $\norm{B_0}{H^1}$ is established by introducing a new functional, we can show the global well-posedness result with small $\norm{\nabla\times B_0}{L^2}$ when $\nu,\eta>0$ by directly applying the energy method and the regularity gain of $u$ from \eqref{eq:2.5D-bound-2}. We also note that $\dot{H}^1$ is a scaling invariant space of $B_0$ if we neglect the effect of $u$ from \eqref{H MHD New}.  But, we skip the proof of the global well-posedness result with small $\norm{\nabla\times B_0}{L^2}$ because we are able to prove the global well-posedness of \eqref{H MHD New} under a smallness assumption to only the third component of $\nabla\times B_{0}$.

To demonstrate our idea, we first deal with the $2\frac{1}{2}$D electron MHD: by removing $u$ in \eqref{H MHD New}
\eqn \label{2D EMHD}
\partial_{t} \widetilde{B}-\eta {\Delta} \widetilde{B}+\widetilde{B}\cdot {\nabla} \widetilde{J}-\widetilde{J}\cdot {\nabla} \widetilde{B}=0, \quad \partial_{t} B^3-\eta {\Delta} B^3+\widetilde{B}\cdot {\nabla} J^3=0.
\een

\begin{theorem} \label{Theorem 1.3}\upshape
Let $\eta>0$ and $B_0\in H^{2}(\mathbb{R}^2)$ with $\dv B_0=0$ and let
\eqn \label{constants EMHD}
\mathcal{H}_{0}=\norm{B_0}{H^2}^2 e^{C\eta^{-4}\norm{\nabla B_0}{L^2}^4}.
\een
There exists $\epsilon>0$ such that if 
\begin{equation}\label{eq:EMHD-data}
\norm{(\nabla\times B_0)^3}{L^2}^{2\exp{(-C\eta^{-2}\norm{\nabla B_0}{L^2}^2)}}\exp\left[\frac{C}{\eta^{2}}(1+\ln \mathcal{H}_{0})\norm{\nabla B_0}{L^2}^2\right] \leq \epsilon \eta^{2},
\end{equation} 
then there exists a unique global-in-time solution $B\in C([0,\infty);H^{2}(\mathbb{R}^2))$ of \eqref{2D EMHD} satisfying 
\[
\begin{split}
&\sup_{t\in[0,\infty)}\norm{B(t)}{H^2}^2+\eta\int^{\infty}_{0}\norm{\nabla B(t)}{H^2}^2\,dt\leq \mathcal{H}_{0},\\
& \sup_{t\in[0,\infty)}\norm{(\nabla\times B)^3(t)}{L^2}^{2}\leq \norm{(\nabla\times B_0)^3}{L^2}^{2\exp{(-C\eta^{-2}\norm{\nabla B_0}{L^2}^2)}}\exp\left[\frac{C}{\eta^{2}}(1+\ln \mathcal{H}_{0})\norm{\nabla B_0}{L^2}^2\right].
\end{split}
\]
\end{theorem}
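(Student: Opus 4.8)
The plan is to deduce global existence from a uniform-in-time a priori bound. By local well-posedness of \eqref{2D EMHD} in $H^{2}(\mathbb{R}^2)$ and the blow-up criterion for the $2\frac12$D electron/Hall system (which follows exactly as in \cite{Bae Kang, Rahman}), it suffices to show $\widetilde{B},B^{3}\in L^{\infty}(0,\infty;\dot H^{1})\cap L^{2}(0,\infty;\dot H^{2})$, because then the $2$D Gagliardo--Nirenberg inequality $\norm{\nabla f}{L^{4}}^{4}\lesssim\norm{\nabla f}{L^{2}}^{2}\norm{\Delta f}{L^{2}}^{2}$ forces $\norm{\nabla B^{3}}{X^{4}_{\infty}}+\norm{\nabla\widetilde{B}}{X^{4}_{\infty}}<\infty$. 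I would run a continuity argument: fix a small $\epsilon>0$ (to be chosen) and let $T^{\ast\ast}\in(0,\infty]$ be the largest time such that $\norm{(\nabla\times B)^{3}(t)}{L^{2}}^{2}\le\epsilon\eta^{2}$ on $[0,T^{\ast\ast})$; on this interval I will prove the asserted bounds, and in particular that the bound on $\norm{(\nabla\times B)^{3}(t)}{L^{2}}^{2}$ is \emph{strictly} below $\epsilon\eta^{2}$, which by continuity forces $T^{\ast\ast}=\infty$.

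Two ingredients are routine. Pairing \eqref{2D EMHD} with $B$ and using the null structure $a\cdot(a\times b)=0$ of the Hall/Lorentz terms gives the energy estimate $\norm{B(t)}{L^{2}}^{2}+2\eta\int_{0}^{t}\norm{\nabla B}{L^{2}}^{2}\,d\tau\le\norm{B_{0}}{L^{2}}^{2}$. For the magneto-vorticity level (the $u\equiv\omega\equiv0$ analogue of Section~\ref{sec:1.1}) I would pair \eqref{2D EMHD} with $-\Delta\widetilde{B}$ and $-\Delta B^{3}$, use $\dv\widetilde{B}=0$ (so that $\norm{\nabla\widetilde{B}}{L^{2}}=\norm{(\nabla\times B)^{3}}{L^{2}}$ and $\norm{\nabla^{2}\widetilde{B}}{L^{2}}=\norm{\nabla(\nabla\times B)^{3}}{L^{2}}$), and observe that after integration by parts the Jacobian-type (purely $B^{3}$) contributions cancel, so that the nonlinearity $\mathcal N$ in
\[
\tfrac12\tfrac{d}{dt}\norm{\nabla B}{L^{2}}^{2}+\eta\norm{\Delta B}{L^{2}}^{2}=\mathcal N
\]
obeys, schematically, $|\mathcal N|\lesssim\bigl(\norm{\nabla\widetilde{B}}{L^{2}}^{1/2}\norm{\nabla B^{3}}{L^{2}}^{1/2}+\norm{\nabla\widetilde{B}}{L^{2}}\bigr)\norm{\Delta B}{L^{2}}^{2}$, i.e.\ every surviving term carries a factor of $\nabla\widetilde{B}=(\nabla\times B)^{3}$. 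On $[0,T^{\ast\ast})$, using $\norm{\nabla\widetilde{B}}{L^{2}}^{2}\le\epsilon\eta^{2}$ and $\norm{\nabla B^{3}}{L^{2}}^{2}\le\norm{\nabla B}{L^{2}}^{2}$, one absorbs $\mathcal N$ into $\tfrac{\eta}{2}\norm{\Delta B}{L^{2}}^{2}$ provided $\epsilon\le c(\eta,\norm{\nabla B_{0}}{L^{2}})$, obtaining $\norm{\nabla B(t)}{L^{2}}^{2}\le\norm{\nabla B_{0}}{L^{2}}^{2}$ and $\int_{0}^{\infty}\norm{\Delta B}{L^{2}}^{2}\,d\tau\le\eta^{-1}\norm{\nabla B_{0}}{L^{2}}^{2}$. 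Since $\dot H^{2}$ is the scaling-critical space for \eqref{2D EMHD} and $\Delta B$ is now controlled in $L^{2}_{t}$, the $H^{2}$-bound follows by the standard device of splitting $[0,\infty)$ into subintervals on which $\int\norm{\Delta B}{L^{2}}^{2}$ is small and closing a Grönwall estimate on each; tracking the powers of $\eta$ through the Young inequalities yields $\sup_{t}\norm{B(t)}{H^{2}}^{2}+\eta\int_{0}^{\infty}\norm{\nabla B}{H^{2}}^{2}\,dt\le\mathcal H_{0}$ with $\mathcal H_{0}$ as in \eqref{constants EMHD}.

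The heart of the proof is the propagation of smallness of $j:=(\nabla\times B)^{3}$. Pairing the $\widetilde{B}$-equation in \eqref{2D EMHD} with $-\Delta\widetilde{B}$ produces $\tfrac12\tfrac{d}{dt}\norm{j}{L^{2}}^{2}+\eta\norm{\nabla j}{L^{2}}^{2}$ on the left; the nonlinearity I would estimate by keeping $\widetilde{B}$ in $L^{\infty}$ via the $2$D Brezis--Gallouet--Wainger inequality $\norm{\widetilde{B}}{L^{\infty}}^{2}\lesssim\norm{\widetilde{B}}{L^{2}}^{2}+\norm{j}{L^{2}}^{2}\bigl(1+\log(e+\norm{B}{H^{2}})+\log\tfrac{1}{\norm{j}{L^{2}}^{2}}\bigr)$, pairing the remaining derivatives with $\norm{\nabla j}{L^{2}}$ and with the $B^{3}$-dissipation $\norm{\Delta B^{3}}{L^{2}}$ (which is integrable in $t$ by the $\dot H^{1}$ step). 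This should deliver a differential inequality of log-linear type
\[
\tfrac{d}{dt}\norm{j}{L^{2}}^{2}\le M(t)\,\norm{j}{L^{2}}^{2}\log\tfrac{1}{\norm{j}{L^{2}}^{2}}+M(t)\,(1+\log\mathcal H_{0})\,\norm{j}{L^{2}}^{2},\qquad \int_{0}^{\infty}M(t)\,dt\le\frac{C}{\eta^{2}}\norm{\nabla B_{0}}{L^{2}}^{2},
\]
where $M(t)\sim C\eta^{-1}\norm{\Delta B^{3}(t)}{L^{2}}^{2}$. Writing $y=\log\norm{j}{L^{2}}^{2}<0$ this becomes the linear inequality $y'\le -M(t)y+M(t)(1+\log\mathcal H_{0})$, and multiplying by $\exp(\int_{0}^{t}M)$ and integrating gives exactly
\[
\norm{j(t)}{L^{2}}^{2}\le\norm{j(0)}{L^{2}}^{\,2\exp(-\int_{0}^{t}M)}\exp\!\Bigl((1+\log\mathcal H_{0})\!\int_{0}^{t}M\Bigr)\le\norm{(\nabla\times B_{0})^{3}}{L^{2}}^{\,2\exp(-C\eta^{-2}\norm{\nabla B_{0}}{L^{2}}^{2})}\exp\!\Bigl[\frac{C}{\eta^{2}}\bigl(1+\log\mathcal H_{0}\bigr)\norm{\nabla B_{0}}{L^{2}}^{2}\Bigr],
\]
which is the claimed bound for $(\nabla\times B)^{3}$; under \eqref{eq:EMHD-data} with the $\epsilon$ fixed above, this right-hand side is $<\epsilon\eta^{2}$, strictly improving the bootstrap hypothesis, so $T^{\ast\ast}=\infty$ and the solution is global with all the stated bounds.

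The main obstacle is this last step, and it is the nonlinear structure of the $j$-equation: one must find the integration-by-parts scheme --- exploiting $\widetilde{J}=\nabla^{\perp}B^{3}$, the two divergence-free constraints, and the cancellation of the purely-$B^{3}$ pieces at this level as well --- for which, modulo the dissipation $\norm{\nabla j}{L^{2}}^{2}$, the borrowed dissipation $\norm{\Delta B^{3}}{L^{2}}^{2}$, and a logarithm of $\norm{B}{H^{2}}$, the nonlinearity forces in no quantity larger than $\norm{j}{L^{2}}^{2}$ itself; it is precisely this log-linear structure that produces the unusual fractional exponent $2\exp(-C\eta^{-2}\norm{\nabla B_{0}}{L^{2}}^{2})$ on $\norm{(\nabla\times B_{0})^{3}}{L^{2}}$. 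A secondary, organizational point is that the three estimates are mutually dependent --- the $\dot H^{1}$ bound needs $j$ small, the $H^{2}$ bound needs the $\dot H^{1}$ bound, and the $j$-estimate needs $\norm{B}{H^{2}}$ inside its logarithm --- so they must be carried simultaneously on $[0,T^{\ast\ast})$ before $T^{\ast\ast}$ is sent to infinity; the rest is the $\eta$-bookkeeping that matches the constants in \eqref{constants EMHD} and \eqref{eq:EMHD-data}.
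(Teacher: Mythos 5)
Your proposal follows essentially the same route as the paper: a continuity/bootstrap argument on the smallness of $(\nabla\times B)^3=-\Delta\psi$, the cancellation of the purely-$B^3$ Jacobian contributions in the $\dot H^1$ estimate (the paper's identity \eqref{eq:Delta-psi}) so that the surviving nonlinearity carries a factor of $\norm{\Delta\psi}{L^2}$ and can be absorbed into the dissipation, a Gr\"onwall $H^2$ bound, and a Brezis--Gallouet type logarithmic inequality (the paper's Lemma \ref{Lemma 2.1}) yielding the log-linear differential inequality for $\norm{(\nabla\times B)^3}{L^2}^2$ whose integration produces the fractional exponent $2\exp(-C\eta^{-2}\norm{\nabla B_0}{L^2}^2)$. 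The remaining differences are cosmetic: you work with $\widetilde B$ and $j$ directly rather than the stream function, close the $H^2$ bound by subinterval splitting rather than a direct Gr\"onwall, and integrate the log-ODE via the substitution $y=\log\norm{j}{L^2}^2$ instead of the paper's $\ln\ln F^{-1}$ manipulation.
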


\begin{remark}\upshape
\noindent
\begin{enumerate}
\item \eqref{eq:EMHD-data} is equivalent to 
\[
\norm{(\nabla\times B_0)^3}{L^2}^{2} \leq \left(\epsilon \eta^{2}\exp\left[-\frac{C}{\eta^{2}}(1+\ln \mathcal{H}_{0})\norm{\nabla B_0}{L^2}^2\right]\right)^{\exp{(C\eta^{-2}\norm{\nabla B_0}{L^2}^2)}}
\]
which allows initial data with large $\norm{\nabla B_0}{L^2}$ as long as $\norm{(\nabla\times B_0)^3}{L^2}$ is sufficiently small. \eqref{eq:EMHD-data} also allows any large $B^3_0$ when $\widetilde{B}_0\equiv0$ (so $\norm{(\nabla\times B_0)^3}{L^2}=0$): it  is because \eqref{2D EMHD} reduces to the heat equation $\partial_t B^3-\eta\Delta B^3=0$ when $\widetilde{B}\equiv0$.
\item {The first and second derivatives of $B$ decay in time:
\[
\norm{\nabla B(t)}{L^2}^2\leq \frac{C}{1+t},\quad \norm{\Delta B(t)}{L^2}^2\leq \frac{C}{(1+t)^2}
\]
which are classical results} following from \eqref{eq:4.8 dd} and \eqref{eq:4.10 dd} with the Sobolev interpolation. 
\item In the proof of Theorem \ref{Theorem 1.3} and Theorem \ref{Theorem 1.4} below, we crucially use Lemma \ref{Lemma 2.1} to set the smallness condition only  in terms of $(\nabla\times B_0)^3$ as detailed in Remark \ref{remark 4.1}.
\end{enumerate}
\end{remark}

We now present a similar result to the $2\frac{1}{2}$D Hall MHD.

\begin{theorem} \label{Theorem 1.4}\upshape
Let $\nu,\eta>0$ and $(u_0,B_0)\in H^{2}(\mathbb{R}^2)$ with $\dv u_0=\dv B_0=0$. Let $\mathcal{E}_0$ and $\mathcal{E}_2$ be defined in \eqref{eq:energy-ineq} and \eqref{eq:2.5D-bound-2}. We also define
\[
\begin{split}
\mathcal{H}_1&=\left(\norm{\nabla B_0}{L^2}^2+\frac{C\mathcal{E}_0\mathcal{E}_2}{\eta\min{(\nu,\eta)}}\right)\exp{\left(\frac{C\mathcal{E}_0}{\nu\eta}\right)},\qquad \mathcal{S}_1= \frac{\mathcal{H}_1}{\eta^2}+\frac{\mathcal{E}_0}{\nu\eta},\\
\mathcal{H}_2&= \left(\norm{B_0}{H^2}^2+\frac{C\mathcal{E}_0\mathcal{E}_2}{\eta\min{(\nu,\eta)}}+\frac{C\mathcal{E}_0\mathcal{E}_2^2}{\eta^4}\right)\exp{\left(\frac{C\mathcal{E}_2}{\eta\min{(\nu,\eta)}}+\frac{C\mathcal{H}_1^2}{\eta^4}\right)}+\norm{u_0}{L^2}^2.
\end{split}
\]
There exists $\epsilon>0$ such that if 
\[
\norm{(\nabla\times B_0)^3}{L^2}^{2\exp{(-C\mathcal{S}_1)}}e^{C\mathcal{S}_1(1+\ln \mathcal{H}_{2})} \leq \epsilon\eta^{2},
\]
there exists a unique global-in-time solution $(u,B)\in C([0,\infty);H^{2}(\mathbb{R}^2))$ of \eqref{H MHD New} satisfying 
\[
\begin{split}
&\sup_{t\in[0,\infty)}\norm{B(t)}{H^2}^2+\eta\int^{\infty}_{0}\norm{\nabla B(t)}{H^2}^2\,dt\leq \mathcal{H}_2,\\
&\sup_{t\in[0,\infty)}\norm{(\nabla\times B)^3(t)}{L^2}^{2}\leq \norm{(\nabla\times B_0)^3}{L^2}^{2\exp{(-C\mathcal{S}_1)}}e^{C\mathcal{S}_1(1+\ln \mathcal{H}_{2})}.
\end{split}
\]
\end{theorem}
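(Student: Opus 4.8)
The plan is to combine local well-posedness in $H^{2}(\mathbb{R}^{2})$ with uniform-in-time a priori estimates and a continuation argument. The guiding observation is that in the $2\tfrac12$D formulation \eqref{H MHD New b}--\eqref{H MHD New c} every Hall nonlinearity carries a factor of $\widetilde B$ (or $\nabla\widetilde B$), and $\widetilde B=\nabla^{\perp}(-\Delta)^{-1}J^{3}$ with $J^{3}=(\nabla\times B)^{3}$; hence controlling the whole system reduces to propagating smallness of $\|J^{3}\|_{L^{2}}$. The velocity field is never the obstruction: Theorem \ref{Theorem 1.2} already gives $\sup_{t}\|\omega(t)\|_{L^{2}}^{2}+\min(\nu,\eta)\int_{0}^{\infty}\|\nabla\omega(t)\|_{L^{2}}^{2}\,dt\le\mathcal E_{2}$, which with \eqref{eq:energy-ineq} controls $u$ in $L^{\infty}_{t}H^{1}\cap L^{2}_{t}\dot H^{2}$ by $\mathcal E_{0},\mathcal E_{2}$ — exactly the quantities feeding $\mathcal H_{1},\mathcal H_{2},\mathcal S_{1}$. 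Since Theorem \ref{Theorem 1.3} is the case $u\equiv0$, I would settle it first and then repeat the scheme, treating the extra $\widetilde u$-couplings as forcing bounded through Theorem \ref{Theorem 1.2}.

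\emph{Step 1 (an $\dot H^{1}$ bound for $B$ and the scalar $J^{3}$ equation).} Running the $\dot H^{1}$ energy estimate on \eqref{H MHD New b}--\eqref{H MHD New c} — testing against $-\Delta\widetilde B$ and $-\Delta B^{3}$ and using the partial cancellation among the Hall terms together with the $u$-bounds of Theorem \ref{Theorem 1.2} — yields $\sup_{t}\|\nabla B(t)\|_{L^{2}}^{2}+\eta\int_{0}^{\infty}\|\nabla^{2}B(t)\|_{L^{2}}^{2}\,dt\le\mathcal H_{1}$. Taking the two-dimensional curl of \eqref{H MHD New b} gives a scalar equation for $J^{3}$ of the form $\partial_{t}J^{3}-\eta\Delta J^{3}+\widetilde u\cdot\nabla J^{3}+\widetilde B\cdot\nabla J^{3}=R$, where $R$ gathers commutators linear in $\nabla\widetilde u,\nabla\widetilde B$ and terms bilinear in $(\widetilde B,\nabla^{2}B^{3})$ and $(\nabla\widetilde B,\nabla B^{3})$ coming from $\widetilde J=\nabla^{\perp}B^{3}$, every summand of $R$ still containing a factor of $\widetilde B$ or $\nabla\widetilde B$. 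Testing this equation against $J^{3}$, the drift $\widetilde B\cdot\nabla J^{3}$ drops out, Hölder puts one power of $\widetilde B$ (or $\nabla\widetilde B$) into a high Lebesgue norm, and since $\widetilde B$ sits at the two-dimensional borderline $\dot W^{1,2}\not\hookrightarrow L^{\infty}$, Lemma \ref{Lemma 2.1} is invoked to replace that factor by $\|J^{3}\|_{L^{2}}$ times a logarithm of the higher norms of $B$. This produces a differential inequality of the schematic form
\[
\frac{d}{dt}\|J^{3}\|_{L^{2}}^{2}+\eta\|\nabla J^{3}\|_{L^{2}}^{2}\le C\,\|J^{3}\|_{L^{2}}^{2}\,\ln\!\Big(e+\frac{P(\|B\|_{H^{2}})}{\|J^{3}\|_{L^{2}}^{2}}\Big)\,g(t),\qquad \int_{0}^{\infty}g(t)\,dt\le C\mathcal S_{1},
\]
with $P$ a fixed polynomial — this is why $\mathcal S_{1}$ is assembled from $\mathcal H_{1}/\eta^{2}$ and $\mathcal E_{0}/(\nu\eta)$. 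On any interval where $\|B\|_{H^{2}}^{2}\le\mathcal H_{2}$, integrating via $z=\ln(1/\|J^{3}\|_{L^{2}}^{2})$, $\dot z\ge-Czg$, gives precisely $\sup_{t}\|J^{3}(t)\|_{L^{2}}^{2}\le\|(\nabla\times B_{0})^{3}\|_{L^{2}}^{2\exp(-C\mathcal S_{1})}e^{C\mathcal S_{1}(1+\ln\mathcal H_{2})}\le\epsilon\eta^{2}$ under the hypothesis on the data.

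\emph{Step 3 ($H^{2}$ bound for $B$ and continuation).} In the full $H^{2}$ energy estimate on \eqref{H MHD New b}--\eqref{H MHD New c} the non-Hall nonlinearities and $\widetilde u$-couplings are absorbed into $\eta\|\nabla B\|_{H^{2}}^{2}$ using \eqref{eq:energy-ineq}, Theorem \ref{Theorem 1.2} and Step 1 (producing the constants entering $\mathcal H_{2}$), while each Hall term, carrying $\widetilde B$, is bounded by Hölder plus Lemma \ref{Lemma 2.1} so that its coefficient is $\le C\|J^{3}\|_{L^{2}}^{\theta}$ for some $\theta>0$; by Step 2 this is $\le C(\epsilon\eta^{2})^{\theta}$, hence absorbable for $\epsilon$ small, closing $\sup_{t}\|B(t)\|_{H^{2}}^{2}+\eta\int_{0}^{\infty}\|\nabla B(t)\|_{H^{2}}^{2}\,dt\le\mathcal H_{2}$. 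One runs Steps 2 and 3 together inside a continuity argument — Step 2 needs $\|B\|_{H^{2}}^{2}\le\mathcal H_{2}$, Step 3 needs $\|J^{3}\|_{L^{2}}^{2}\le\epsilon\eta^{2}$ — so both bounds persist on $[0,T^{*})$; then $\nabla B\in L^{\infty}_{t}L^{2}\cap L^{2}_{t}\dot H^{1}\subset X^{r}_{T^{*}}$ contradicts the blow-up criterion $\|\nabla\widetilde B\|_{X^{r}_{T^{*}}}=\infty$ or $\|\nabla B^{3}\|_{X^{r}_{T^{*}}}=\infty$ of Corollary \ref{Corollary 1.2}, forcing $T^{*}=\infty$; uniqueness is inherited from the local theory.

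\emph{Main obstacle.} The delicate point is the interlocking of Steps 2 and 3: $J^{3}$ cannot be estimated without $\|B\|_{H^{2}}$ under control, and the $H^{2}$ estimate cannot be closed without $\|J^{3}\|_{L^{2}}$ small, so the two estimates must be bootstrapped simultaneously with all constants tracked; this mismatch between the scale where smallness is assumed ($J^{3}$ in $L^{2}$) and the scale where regularity is closed ($B$ in $H^{2}$) is precisely what forces the exponent $2\exp(-C\mathcal S_{1})$ on $\|(\nabla\times B_{0})^{3}\|_{L^{2}}$ and the factor $1+\ln\mathcal H_{2}$. Subordinate to this but indispensable is the two-dimensional logarithmic borderline: "$\widetilde B$ small in $\dot W^{1,2}$" is not by itself usable in the $L^{\infty}$-type estimates one needs, and Lemma \ref{Lemma 2.1} is the device that makes both the Grönwall of Step 2 and the absorption in Step 3 go through.
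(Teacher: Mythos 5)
Your proposal follows essentially the same route as the paper: reduce to the stream-function/curl variables so that the smallness is carried by $(\nabla\times B)^3=-\Delta\psi$, propagate that smallness through a logarithmic differential inequality obtained from Lemma \ref{Lemma 2.1} and a log-log Gr\"onwall argument (your substitution $z=\ln(1/\norm{J^3}{L^2}^2)$ is exactly the paper's $\ln\ln F^{-1}$ computation), control $u$ entirely through \eqref{eq:energy-ineq} and \eqref{eq:2.5D-bound-2}, and close everything with a continuity argument against the blow-up criterion. The quantities $\mathcal H_1,\mathcal S_1,\mathcal H_2$ arise in your scheme for the same reasons they do in the paper.

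One step, as you describe it, would not go through. In Step 3 you claim that in the $H^2$ energy estimate ``each Hall term, carrying $\widetilde B$, is bounded by H\"older plus Lemma \ref{Lemma 2.1} so that its coefficient is $\le C\norm{J^3}{L^2}^{\theta}$'' and is then absorbed by smallness. This is false for the Hall contributions generated by $\widetilde J=\nabla^{\perp}B^3$: after testing against $\Delta^2 B$ one encounters trilinear terms built purely from $B^3$ (schematically $\int \partial_k B^3\,\det(\nabla\partial_k B^3,\nabla\Delta B^3)$ and $\int \nabla B^3\,\Delta B^3\,\nabla\Delta B^3$), which contain no factor of $\widetilde B$ or $J^3$ and hence cannot be made small. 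The paper avoids this entirely: the smallness assumption $\sup_t\norm{\Delta\psi(t)}{L^2}^2\le\epsilon\eta^2$ is used only in the $\dot H^1$ estimate, where the cancellation identity \eqref{eq:Delta-psi} leaves a term bounded by $C\norm{\Delta\psi}{L^2}\norm{\Delta B}{L^2}^2$ that is absorbed into $\eta\norm{\Delta B}{L^2}^2$; the $H^2$ level is then closed by a plain Gr\"onwall inequality of the form $\frac{d}{dt}\norm{\Delta B}{L^2}^2+\eta\norm{\nabla\Delta B}{L^2}^2\le \frac{C}{\eta^3}\norm{\nabla B}{L^2}^2\norm{\Delta B}{L^2}^4+\dots$, integrable thanks to the already-established bound $\mathcal H_1$ (this is where the factor $\exp(C\mathcal H_1^2/\eta^4)$ in $\mathcal H_2$ comes from). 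Relatedly, your ``main obstacle'' paragraph attributes the need for $\norm{J^3}{L^2}^2\le\epsilon\eta^2$ to the $H^2$ step, whereas it is actually the $\dot H^1$ step (your Step 1) that requires it; your continuity argument still works once the bootstrap is set up around the $\dot H^1$ bound rather than the $H^2$ bound.
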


\vspace{1ex}

Compared to Theorem \ref{Theorem 1.3}, Theorem \ref{Theorem 1.4} is rather complicated, but we prove  it as the proof of Theorem \ref{Theorem 1.3} because the regularity gain of $u$ from \eqref{eq:2.5D-bound-2} enables us to consider \eqref{H MHD New} as a perturbation of \eqref{2D EMHD}.
To the best of our knowledge, Theorem \ref{Theorem 1.3} and Theorem \ref{Theorem 1.4} are the first global well-posedness results with non-small magnetic field for the $2\frac{1}{2}$D electron MHD and Hall MHD.

\section{Preliminaries}
All generic constants will be denoted by $C$. We follow the convention that such constants can vary from expression to expression and even between two occurrences within the same expression. We  use the simplified form of the integral of the spatial variables:
\[
\int=:\int_{\mathbb{R}^{d}}dx, \quad d=2,3.
\]

For $i\in \mathbb{N}$, we denote $\nabla^{i}f$ by the full $i$-th order derivative of $f$. For $k\in \mathbb{N}$, $H^{k}$ is a energy space equipped with the following norm
\[
\|f\|^{2}_{H^{k}}=\|f\|^{2}_{L^{2}}+\sum^{k}_{i=1}\left\|\nabla^{i} f\right\|^{2}_{L^{2}}.
\]

Let $\widehat{f}$ and $\mathcal{F}(f)$ be the Fourier transform:
\[
\widehat{f}(\xi)=\mathcal{F}(f)(\xi)=\int f(x)e^{-ix\cdot \xi}dx.
\]
By using the Fourier transform, we can also define $H^{s}$, $s\notin \mathbb{N}$:
\[
\|f\|^{2}_{H^{s}}=\int (1+|\xi|^{2})^{s}\left|\widehat{f}(\xi)\right|^{2}, \quad \|f\|^{2}_{\dot{H}^{s}}=\int |\xi|^{2s}|\widehat{f}(\xi)|^2.
\]

We present some Sobolev estimates. For $1<p<\infty$ and a smooth divergence-free vector field $f$,
\begin{equation}\label{curl-Lp}
\norm{\nabla f}{L^{p}}\leq C(p) \norm{\nabla\times f}{L^{p}},
\end{equation}
We also use the following inequalities: 
\begin{subequations} \label{Sobolev inequalties}
\begin{align}
\text{3D}: \ &\left\|f\right\|_{L^{3}}\leq C \left\|f\right\|_{\dot{H}^{\frac{1}{2}}}, \ \ \left\|f\right\|_{L^{6}}\leq C \left\|f\right\|_{\dot{H}^{1}}, \ \ \|f\|_{L^{\infty}} \leq C \|\nabla f\|^{1/2}_{L^{2}} \|\Delta f\|^{1/2}_{L^{2}} \label{Sobolev inequalties a}\\
\text{2D}: \ & \norm{f}{L^{4}}\leq C\norm{f}{L^{2}}^{1/2}\norm{\nabla f}{L^{2}}^{1/2}\\
\text{Interpolation}: \  &\|f\|_{\dot{H^{s}}}\leq \|f\|^{\theta}_{\dot{H^{s_{0}}}}\|f\|^{1-\theta}_{\dot{H^{s_{1}}}}, \quad s_{0}<s<s_{1}, \ s=\theta s_{0}+(1-\theta)s_{1}. \label{Interpolation}
\end{align}
\end{subequations}

We now recall some properties of $\text{BMO}$ and the Hardy space $\mathcal{H}$: see \cite[Chapter 6]{Duoandikoetxea} for the definitions  of these two spaces.

\begin{enumerate}
\item  \cite[Theorem II.1]{Coifman}: if $(E,B)$ satisfy $\dv E=0$ and $\curl B=0$,  then $E\cdot B\in \mathcal{H}$ and
\eqn \label{Hardy 1}
\left\|E\cdot B\right\|_{\mathcal{H}}\leq C \|E\|_{L^{2}} \|B\|_{L^{2}}.
\een
\item Duality of $\text{BMO}$ and $\mathcal{H}$ \cite{Fefferman}:
\eqn \label{BMO 1}
\Big|\int fg \Big|\leq C \|f\|_{\text{BMO}} \|g\|_{\mathcal{H}}.
\een
\item Embedding property of $\text{BMO}$ \cite[Section 1.2]{Brezis Nirenberg}: $\dot{H}^{\frac{d}{2}}\subset \text{BMO}$ and 
\eqn \label{BMO embedding}
\|f\|_{\text{BMO}}\leq C \|f\|_{\dot{H}^{s}}, \quad s=\frac{d}{2}.
\een
\end{enumerate}

We also recall Gr\"onwall's inequality \cite[Page 624]{Evans}: if $f\geq0$ is an absolutely continuous function satisfying $
f'(t) \leq \phi(t)f(t) +\psi(t)$ for any non-negative and integrable functions $\phi$ and $\psi$, then 
\[
f(t) \leq \left(f(0)+\int^{t}_{0}\psi(\tau)d\tau\right)\exp\int^{t}_{0}\phi(\tau)\,d\tau \quad \text{for all $t\leq T$}.
\]

We finally give the following lemma which will be used in the proof of Corollary \ref{Corollary 1.1}.

\begin{lemma}[{\cite[Modified version of Lemma A.4]{Danchin Tan 3}}]\label{Lemma 2.2}\upshape
Let $D, E, W$ be non-negative measurable functions and $X$ be a non-negative differentiable function. Suppose there exists $\alpha,\beta\geq0$ such that
\[
\frac{d}{dt}X+\left(1+\beta(1-X^\alpha)\right) D \leq E+ WX
\]
on $0\leq t\leq T$. If 
\[
\left( X(0)+\int^T_0 E(t)\,dt\right)\exp{\left(\int^{T}_{0}W(t)\,dt\right)}<1,
\]
then, for $0\leq t\leq T$,
\[
X(t)+\int^{t}_{0}D(\tau)\,d\tau\leq \left( X(0)+\int^t_0 E(\tau)\,d\tau\right)\exp{\left(\int^{t}_{0}W(\tau)\,d\tau\right)}.
\]
\end{lemma}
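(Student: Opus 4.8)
The plan is to run a continuity (bootstrap) argument driven by the sign of the coefficient $1+\beta(1-X^{\alpha})$. The elementary point is that whenever $0\le X(t)\le 1$ one has $X(t)^{\alpha}\le 1$ (with the convention $X^{0}\equiv1$), hence $\beta\bigl(1-X(t)^{\alpha}\bigr)D(t)\ge 0$, so the hypothesis collapses to the clean differential inequality
\[
\frac{d}{dt}X(t)+D(t)\ \le\ E(t)+W(t)X(t)
\]
on any subinterval of $[0,T]$ where $X\le 1$. First I would introduce the comparison function
\[
Y(t):=\Bigl(X(0)+\int_{0}^{t}E(\tau)\,d\tau\Bigr)\exp\Bigl(\int_{0}^{t}W(\tau)\,d\tau\Bigr),
\]
which is non-decreasing since $E,W\ge 0$, satisfies $Y(0)=X(0)$, and by hypothesis obeys $Y(T)<1$; in particular $X(0)=Y(0)\le Y(T)<1$.

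For the bootstrap, set $T^{*}:=\sup\{t\in[0,T]:X(s)<1\ \text{for all}\ s\in[0,t]\}$, which is positive because $X$ is continuous and $X(0)<1$, and observe that $X<1$ on $[0,T^{*})$. On that interval the displayed differential inequality holds; multiplying by the integrating factor $\exp\bigl(-\int_{0}^{t}W\bigr)$, integrating over $[0,t]$, using $W\ge 0$ (so $\exp\bigl(-\int_{0}^{\tau}W\bigr)\ge\exp\bigl(-\int_{0}^{t}W\bigr)$ for $\tau\le t$) to keep the full $\int_{0}^{t}D$ term and $\exp\bigl(-\int_{0}^{\tau}W\bigr)\le1$ to bound the $E$ term, and multiplying back, one obtains
\[
X(t)+\int_{0}^{t}D(\tau)\,d\tau\ \le\ \Bigl(X(0)+\int_{0}^{t}E(\tau)\,d\tau\Bigr)\exp\Bigl(\int_{0}^{t}W(\tau)\,d\tau\Bigr)=Y(t)\ \le\ Y(T)<1
\]
for every $t<T^{*}$ (the bound $X\le Y$ is also exactly what Gr\"onwall's inequality as stated above gives). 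Hence $X(t)\le Y(T)<1$ on $[0,T^{*})$, so by continuity $X(T^{*})\le Y(T)<1$ as well; were $T^{*}<T$, continuity of $X$ would then force $X<1$ on a neighbourhood of $T^{*}$, contradicting maximality. Therefore $T^{*}=T$, the inequality $X(t)+\int_{0}^{t}D\le Y(t)$ holds on $[0,T)$, and letting $t\to T^{-}$ (monotone convergence for $\int_{0}^{t}D$) extends it to $t=T$, which is the assertion.

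The only genuine obstacle is the indefinite sign of $\beta(1-X^{\alpha})$: once $X$ passes $1$, the left-hand side of the hypothesis need no longer control $D$, so the estimate is a priori only conditional, and the smallness assumption $Y(T)<1$ is precisely the device that confines the trajectory to the favourable region $\{X\le1\}$ — the continuity argument being what upgrades the conditional bound into an unconditional one. In the degenerate case $\alpha=0$ the $\beta$-term vanishes identically, the inequality reduces to $\frac{d}{dt}X+D\le E+WX$, and the conclusion is just Gr\"onwall's inequality with no smallness required; the argument above subsumes this case automatically.
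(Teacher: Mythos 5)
Your proof is correct. The paper itself gives no proof of Lemma \ref{Lemma 2.2} --- it is quoted as a modification of Lemma A.4 in \cite{Danchin Tan 3} --- but your argument is exactly the expected one: the observation that $1+\beta(1-X^{\alpha})\geq 1$ on the region $X\leq 1$ reduces the hypothesis to $\frac{d}{dt}X+D\leq E+WX$, the integrating-factor Gr\"onwall step (using $W\geq 0$ to retain the full $\int_0^t D$) yields the stated bound there, and the smallness condition together with a continuity/bootstrap argument keeps the trajectory in $\{X<1\}$ up to time $T$. This matches the mechanism of the cited lemma, and the closing of the bootstrap (continuity of $X$ at $T^{*}$ plus the strict inequality $Y(T)<1$) is handled correctly.
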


\section{Proof of Theorem \ref{Theorem 1.1} and Corollary \ref{Corollary 1.1}}
In this section, we prove Theorem \ref{Theorem 1.1} and Corollary \ref{Corollary 1.1}. 

\subsection{Proof of Theorem \ref{Theorem 1.1}}
From \eqref{eq:B+omega} with (\ref{Hardy 1}) and (\ref{BMO 1}), it follows that 
\begin{align*}
&\frac{1}{2}\frac{d}{dt}\norm{B+\omega}{L^{2}}^{2}+\nu\norm{\nabla(B+\omega)}{L^{2}}^{2}=\int ((B+\omega)\cdot\nabla u)\cdot (B+\omega)+(\eta-\nu)\int\Delta B \cdot (B+\omega)\\
&=-\int ((B+\omega)\cdot\nabla(B+\omega))\cdot u+(\nu-\eta)\int\nabla B :\nabla(B+\omega)\\
&\leq \norm{u}{\text{BMO}}\norm{(B+\omega)\cdot\nabla(B+\omega)}{\mathcal{H}}+|\nu-\eta|\norm{\nabla B}{L^2}\norm{\nabla(B+\omega)}{L^2}\\
&\leq \norm{u}{\text{BMO}}\norm{B+\omega}{L^2}\norm{\nabla(B+\omega)}{L^2}+|\nu-\eta|\norm{\nabla B}{L^2}\norm{\nabla(B+\omega)}{L^2}\\
&\leq \frac{\nu}{2}\norm{\nabla(B+\omega)}{L^2}^{2}+\frac{C}{\nu}\norm{u}{\text{BMO}}^{2}\norm{B+\omega}{L^2}^{2}+\frac{C}{\nu}(\nu-\eta)^{2}\norm{\nabla B}{L^2}^{2}.
\end{align*}
From this, we deduce that 
\eqn \label{B+omega L2}
\frac{d}{dt}\norm{B+\omega}{L^{2}}^{2}+\nu\norm{\nabla(B+\omega)}{L^{2}}^{2}\leq \frac{C}{\nu}\norm{u}{\text{BMO}}^{2}\norm{B+\omega}{L^2}^{2}+\frac{C(\nu-\eta)^{2}}{\nu}\norm{\nabla B}{L^2}^{2}.
\een
By Gr\"onwall's inequality with  \eqref{eq:energy-ineq}, we derive \eqref{eq:bound-1}. Moreover, \eqref{eq:bound-2} is proved by using \eqref{eq:bound-1} and \eqref{eq:energy-ineq}:
\[
\begin{split}
& \sup_{t\in[0,T)}\left\|\omega(t)\right\|^{2}_{L^{2}}+\min{(\nu,\eta)}\int^{T}_{0}\left\|\nabla\omega(t)\right\|^{2}_{L^{2}}dt \\
&\leq \sup_{t\in[0,T)}\left\|(B+\omega)(t)\right\|^{2}_{L^{2}}+\nu\int^{T}_{0}\left\|\nabla (B+\omega)(t)\right\|^{2}_{L^{2}}dt +\sup_{t\in[0,T)}\left\|B(t)\right\|^{2}_{L^{2}} +\eta\int^{T}_{0}\left\|\nabla B(t)\right\|^{2}_{L^{2}}dt \\
&\leq \mathcal{E}_{1}(U_T)+\left\|u_{0}\right\|^{2}_{L^{2}} +\left\|B_{0}\right\|^{2}_{L^{2}}.
\end{split}
\]

We now assume that $\nu=\eta>0$. Then \eqref{eq:B+omega} is reduced to
\eqn \label{eq:B+omega2}
\begin{split}
&\partial_{t}(B+\omega)-\nu\Delta(B+\omega)+u\cdot\nabla (B+\omega)-(B+\omega)\cdot\nabla u=0,\\
& \dv(B+\omega)=\dv u=0.
\end{split}
\een
By taking the $L^2$ inner product of \eqref{eq:B+omega2} with $-\Delta(B+\omega)$,  we have 
\begin{align*}
&\frac{1}{2}\frac{d}{dt}\norm{\nabla(B+\omega)}{L^2}^{2}+\nu\norm{\Delta(B+\omega)}{L^2}^{2}\\
&=\int (u\cdot\nabla (B+\omega))\cdot \Delta(B+\omega)-\int ((B+\omega)\cdot\nabla u)\cdot  \Delta(B+\omega)\\
&=-\int \left(\partial_{k}u\cdot\nabla(B+\omega)\right)\cdot \partial_{k}(B+\omega)-\int \left((B+\omega)\cdot\nabla u\right)\cdot \Delta(B+\omega) \\
&\leq C\norm{\nabla u}{L^{3}}\norm{\nabla(B+\omega)}{L^2}\norm{\Delta(B+\omega)}{L^2} \leq \frac{\nu}{2}\norm{\Delta(B+\omega)}{L^2}^{2}+\frac{C}{\nu}\norm{\nabla u}{L^3}^2\norm{\nabla(B+\omega)}{L^2}^{2}
\end{align*}
and so we obtain
\eqn \label{B+omega H1}
\frac{d}{dt}\norm{\nabla(B+\omega)}{L^2}^{2}+\nu\norm{\Delta(B+\omega)}{L^2}^{2} \leq \frac{C}{\nu}\norm{\nabla u}{L^2}\norm{\Delta u}{L^2}\norm{\nabla(B+\omega)}{L^2}^{2}.
\een 
Similarly, the $L^2$ inner product of \eqref{eq:B+omega2} with $\Delta^2(B+\omega)$ gives 
\begin{align*}
\frac{1}{2}\frac{d}{dt}\norm{\Delta(B+\omega)}{L^2}^{2}&+\nu\norm{\nabla\Delta(B+\omega)}{L^2}^{2}=-\int\left[ \Delta u\cdot \nabla(B+\omega)+2\partial_k u\cdot\nabla \partial_k(B+\omega)\right]\cdot \Delta(B+\omega)\\
&-\int\nabla\left((B+\omega)\cdot\nabla u\right):\nabla\Delta(B+\omega)\\
&\leq C\left(\norm{B+\omega}{L^{\infty}}+\norm{\nabla(B+\omega)}{L^3}\right)\norm{\Delta u}{L^2}\norm{\nabla\Delta(B+\omega)}{L^2} \\
&\leq C\norm{\nabla(B+\omega)}{L^2}^{1/2}\norm{\Delta(B+\omega)}{L^2}^{1/2}\norm{\Delta u}{L^2}\norm{\nabla\Delta(B+\omega)}{L^2}\\
&\leq \frac{\nu}{2}\norm{\nabla\Delta(B+\omega)}{L^{2}}^{2}+\frac{C}{\nu}\norm{\Delta u}{L^{2}}^{2}\norm{\nabla(B+\omega)}{L^2}\norm{\Delta(B+\omega)}{L^2}.
\end{align*}
From this, we also obtain 
\eqn \label{B+omega H2}
\frac{d}{dt}\norm{\Delta(B+\omega)}{L^2}^{2}+\nu\norm{\nabla\Delta(B+\omega)}{L^2}^{2}
\leq \frac{C}{\nu}\norm{\Delta u}{L^{2}}^{2}\norm{\nabla(B+\omega)}{L^2}\norm{\Delta(B+\omega)}{L^2}.
\een
So \eqref{B+omega L2}, (\ref{B+omega H1}) and (\ref{B+omega H2}) yield  
\[
\frac{d}{dt}\norm{B+\omega}{H^2}^{2}+\nu\norm{\nabla(B+\omega)}{H^2}^{2} \leq \frac{C}{\nu}\left(\norm{\nabla u}{L^2}^2+\norm{\Delta u}{L^{2}}^{2}\right)\norm{B+\omega}{H^2}^2.
\]
By Gr\"onwall's inequality with \eqref{eq:energy-ineq} and \eqref{eq:bound-2}, we arrive at \eqref{eq:bound-3}.

\subsection{Proof of Corollary \ref{Corollary 1.1}}\label{sec:3.2}
First of all, we rewrite \eqref{eq:B+omega} as follows:
\begin{equation}\label{eq:B+omega3}
\begin{split}
&\partial_{t}(B+\omega)-\frac{\nu+\eta}{2} \Delta (B+\omega)+u\cdot \nabla (B+\omega) -(B+\omega)\cdot \nabla u=\frac{\nu-\eta}{2}\Delta(B+\omega)+(\eta-\nu)\Delta B,\\
& \dv (B+\omega)=0.
\end{split}
\end{equation}
By taking the $L^2$ inner product of \eqref{eq:B+omega3} with $B+\omega$, we have 
\begin{align*}
& \frac{1}{2}\frac{d}{dt}\norm{B+\omega}{L^2}^2+\frac{\nu+\eta}{2}\norm{\nabla(B+\omega)}{L^2}^2 \\
& =\int \left((B+\omega)\cdot\nabla u\right) \cdot (B+\omega)-\frac{\nu-\eta}{2}\norm{\nabla(B+\omega)}{L^2}^2+(\nu-\eta)\int\nabla B:\nabla (B+\omega)\\
&=(\text{I})+(\text{II})+(\text{III}).
\end{align*}
Since $\frac{|\nu-\eta|}{\nu+\eta}\leq \frac{1}{8}$, we bound 
\[
(\text{II})+(\text{III})\leq \frac{\nu+\eta}{8}\norm{\nabla(B+\omega)}{L^2}^2+ C\frac{(\nu-\eta)^2}{\nu+\eta}\norm{\nabla B}{L^2}^2.
\]
We also bound (I) as
\begin{align*}
(\text{I})& \leq \norm{B+\omega}{L^6}\norm{B+\omega}{L^3}\norm{\nabla u}{L^2}\leq C\norm{\nabla(B+\omega)}{L^2}^{3/2}\norm{B+\omega}{L^2}^{1/2}\norm{\nabla u}{L^2}^{1/2}\left(\norm{B+\omega}{L^2}^{1/2}+\norm{B}{L^2}^{1/2}\right)\\
&\leq \frac{\nu+\eta}{16}\norm{B+\omega}{L^2}^{2/3}\norm{\nabla(B+\omega)}{L^2}^{2}+\frac{\nu+\eta}{16}\norm{\nabla(B+\omega)}{L^2}^2 +\frac{C}{(\nu+\eta)^3}(1+\norm{B}{L^2}^2)\norm{\nabla u}{L^2}^2\norm{B+\omega}{L^2}^2.
\end{align*}
Thus, we obtain
\begin{align*}
& \frac{d}{dt}\norm{B+\omega}{L^2}^2+\frac{\nu+\eta}{2}\left(1+\frac{1-\norm{B+\omega}{L^2}^{2/3}}{4}\right)\norm{\nabla(B+\omega)}{L^2}^2 
\\
&\leq \frac{C}{(\nu+\eta)^3}(1+\norm{B}{L^2}^2)\norm{\nabla u}{L^2}^2\norm{B+\omega}{L^2}^2+C\frac{(\nu-\eta)^2}{\nu+\eta}\norm{\nabla B}{L^2}^2.
\end{align*}
By Lemma \ref{Lemma 2.2}, if $(u_0,B_0)$ satisfies 
\[
\begin{split}
&\left(\norm{B_0+\omega_0}{L^2}^{2}+C\frac{(\nu-\eta)^2}{\nu+\eta}\int^t_0\norm{\nabla B(\tau)}{L^2}^2\,d\tau\right) \exp\left[\frac{C}{(\nu+\eta)^3}\int^{t}_{0}(1+\norm{B(\tau)}{L^2}^2)\norm{\nabla u(\tau)}{L^2}^2 d\tau\right]\\
&\leq \left(\norm{B_0+\omega_0}{L^2}^{2}+C\mathcal{E}_0\frac{(\nu-\eta)^2}{(\nu+\eta)^2}\right) \exp\left(\frac{C(\mathcal{E}_0+\mathcal{E}_0^2)}{(\nu+\eta)^{4}}\right)<1,
\end{split}
\]
where we use \eqref{eq:energy-ineq} with the fact {that $\frac{\nu+\eta}{2}$, $\nu$ and $\eta$ are comparable to each other}. Hence, we arrive at
\[
\begin{split}
&\sup_{t\in[0,T^*)}\norm{(B+\omega)(t)}{L^2}^2+\frac{\nu+\eta}{2}\int^{T^*}_{0}\norm{\nabla(B+\omega)(t)}{L^2}^2\,dt \\
&\leq \left(\norm{B_0+\omega_0}{L^2}^{2}+C\mathcal{E}_0\frac{(\nu-\eta)^2}{(\nu+\eta)^2}\right) \exp\left(\frac{C(\mathcal{E}_0+\mathcal{E}_0^2)}{(\nu+\eta)^{4}}\right).
\end{split}
\]

\section{Proof of Theorem \ref{Theorem 1.3} and Theorem \ref{Theorem 1.4}}
To prove Theorem \ref{Theorem 1.3} and Theorem \ref{Theorem 1.4}, we need the following logarithmic Sobolev inequality.

\begin{lemma} \label{Lemma 2.1}\upshape
For $f\in H^{2}(\mathbb{R}^{2})$,
\[
\norm{f}{L^{\infty}}\leq C\norm{\nabla f}{L^2}\left[1+\left(\ln{\frac{\norm{f}{H^2}^2}{\norm{\nabla f}{L^2}^2}}\right)^{1/2}\right].
\]
\end{lemma}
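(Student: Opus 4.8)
The plan is to establish this logarithmic Sobolev inequality (often attributed to Brezis--Gallouet/Brezis--Wainger type estimates in two dimensions) via a Littlewood--Paley or Fourier frequency decomposition, splitting into low, middle, and high frequencies. First I would write, via Fourier inversion, $\|f\|_{L^\infty} \le C\int |\widehat{f}(\xi)|\,d\xi$, and split the integral over the regions $|\xi|\le 1$, $1<|\xi|\le N$, and $|\xi|>N$ for a parameter $N\ge 2$ to be chosen. The low-frequency piece $|\xi|\le1$ is controlled by Cauchy--Schwarz by $C\|f\|_{L^2}$, and in fact by $C\|\nabla f\|_{L^2}$ up to adjusting constants (or one absorbs it into the middle piece by noting $|\xi|\sim 1$ there); more carefully, since we want $\|\nabla f\|_{L^2}$ on the right, I would instead split at $|\xi| \le R$ for a small fixed $R$ and handle $|\xi|\le R$ separately using $\|f\|_{L^2}$ — but since the target has only $\nabla f$, the cleanest route is to bound everything in terms of $\|\nabla f\|_{L^2}$ and $\|f\|_{H^2}$ directly by keeping a weight $|\xi|$ available, as I now describe.

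The key computation: for the middle range, apply Cauchy--Schwarz with the weight $|\xi|^{-1}\cdot|\xi|$,
\[
\int_{1<|\xi|\le N} |\widehat f(\xi)|\,d\xi \le \left(\int_{1<|\xi|\le N}\frac{d\xi}{|\xi|^2}\right)^{1/2}\left(\int |\xi|^2|\widehat f(\xi)|^2\,d\xi\right)^{1/2} \le C(\ln N)^{1/2}\,\|\nabla f\|_{L^2},
\]
since in two dimensions $\int_{1<|\xi|\le N}|\xi|^{-2}\,d\xi = 2\pi\ln N$. For the high range, use the weight $|\xi|^{-2}\cdot|\xi|^2$,
\[
\int_{|\xi|>N}|\widehat f(\xi)|\,d\xi \le \left(\int_{|\xi|>N}\frac{d\xi}{|\xi|^4}\right)^{1/2}\left(\int|\xi|^4|\widehat f(\xi)|^2\,d\xi\right)^{1/2} \le \frac{C}{N}\|f\|_{\dot H^2} \le \frac{C}{N}\|f\|_{H^2}.
\]
For the low range $|\xi|\le 1$, bound $\int_{|\xi|\le1}|\widehat f|\,d\xi$: this needs care since $|\xi|^{-1}$ is not in $L^2$ near the origin in 2D. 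Here I would use $\int_{|\xi|\le 1}|\widehat f|\,d\xi \le (\int_{|\xi|\le1} 1\,d\xi)^{1/2}\|\widehat f\|_{L^2} = C\|f\|_{L^2}$; to convert to $\|\nabla f\|_{L^2}$ one uses that the statement is scale-covariant and the estimate should be invariant, so in fact the honest path is to note $\|f\|_{L^\infty}$ scales like $\|\nabla f\|_{L^2}$ in 2D and the logarithm is the scale-invariant combination — concretely, replace $f$ by its rescaling $f_\lambda(x)=f(\lambda x)$, observe $\|f_\lambda\|_{L^\infty}=\|f\|_{L^\infty}$, $\|\nabla f_\lambda\|_{L^2}=\|\nabla f\|_{L^2}$, $\|f_\lambda\|_{H^2}^2 \to$ a quantity optimized over $\lambda$, so WLOG $\|\nabla f\|_{L^2}=1$; then the low-frequency term is harmless because one may assume (after this normalization and a further translation/modulation argument, or simply by the Gagliardo--Nirenberg inequality $\|f\|_{L^2}\le \|f\|_{L^\infty}$ type bounds are not circular at the level needed) it is dominated.

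Cleaner alternative I would actually adopt: bound $\|f\|_{L^\infty}\le C\|\nabla f\|_{L^2}+ C\int_{|\xi|\ge1}|\widehat f(\xi)|\,d\xi$ by treating $|\xi|\le1$ via $\int_{|\xi|\le 1}|\widehat f|\le \int_{|\xi|\le 1}|\xi|^{-1}|\xi||\widehat f| \le C\|\nabla f\|_{L^2}$ — wait, $|\xi|^{-1}\notin L^2(|\xi|\le1)$ in 2D, so instead split $|\xi|\le1$ once more dyadically and sum, or simply accept $C\|f\|_{L^2}$ and then note $\|f\|_{L^2}\le \|\nabla f\|_{L^2}\cdot(\ldots)$ is false in general but the inequality as stated allows a constant times $\|\nabla f\|_{L^2}$ only, so the genuinely correct treatment is the scaling normalization above. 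Then, after normalizing $\|\nabla f\|_{L^2}=1$, combine to get $\|f\|_{L^\infty}\le C + C(\ln N)^{1/2} + C N^{-1}\|f\|_{H^2}$, and choose $N = 1+\|f\|_{H^2}$ (legitimate since $\|f\|_{H^2}\ge\|\nabla f\|_{L^2}=1$), giving $\|f\|_{L^\infty}\le C + C(\ln(1+\|f\|_{H^2}))^{1/2} \le C(1+(\ln(\|f\|_{H^2}^2/\|\nabla f\|_{L^2}^2))^{1/2})$ after undoing the normalization. The main obstacle is the bookkeeping at low frequencies — making sure the final right-hand side involves only $\|\nabla f\|_{L^2}$ (not $\|f\|_{L^2}$) and only the scale-invariant ratio inside the logarithm; this is resolved precisely by the scaling-invariance reduction, which I would carry out first.
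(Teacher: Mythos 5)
Your overall strategy --- Fourier inversion, a three-region frequency splitting, Cauchy--Schwarz with the weights $|\xi|^{-1}\cdot|\xi|$ and $|\xi|^{-2}\cdot|\xi|^{2}$, and optimization of the cutoff --- is exactly the paper's, and your middle- and high-frequency estimates are correct and identical to those in the paper's proof. The genuine gap is the low-frequency region, which you correctly diagnose but never close. Splitting at the fixed radius $|\xi|\le 1$ forces the bound $C\|f\|_{L^2}$ there, and that term really cannot be absorbed into $C\|\nabla f\|_{L^2}\bigl(1+(\ln(\|f\|_{H^2}^2/\|\nabla f\|_{L^2}^2))^{1/2}\bigr)$: for $f_\lambda(x)=f(x/\lambda)$ with $\lambda\to\infty$ one has $\|f_\lambda\|_{L^2}\sim\lambda$ while $\|\nabla f_\lambda\|_{L^2}$ is fixed and the right-hand side grows only like $(\ln\lambda)^{1/2}$. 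Your proposed repair by scaling can in principle be made rigorous (the left-hand side and $\|\nabla f\|_{L^2}$ are scale invariant, $\|f_\lambda\|_{H^2}$ is minimized at some $\lambda$, and the right-hand side is monotone in $\|f\|_{H^2}$, so it suffices to prove the inequality for the optimally rescaled function and then balance the $\lambda^{-1}\|f\|_{L^2}$ and $\lambda\|\Delta f\|_{L^2}$ contributions), but as written it is only a gesture: after normalizing $\|\nabla f\|_{L^2}=1$ the low-frequency term is still $C\|f\|_{L^2}$, which is not $\le C$, the logarithm is \emph{not} scale invariant as you assert, and the ``further translation/modulation argument'' is never specified. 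As it stands the argument does not close.

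The paper's fix is much simpler and avoids scaling entirely: take the low cutoff to be $R^{-1}$ rather than $1$, i.e.\ split into $|\xi|\le R^{-1}$, $R^{-1}\le|\xi|\le R$, $|\xi|\ge R$ with a single parameter $R\ge 1$. Cauchy--Schwarz on the small ball then gives
\[
\int_{|\xi|\le R^{-1}}|\widehat f(\xi)|\,d\xi\le \|f\|_{L^2}\Bigl(\int_{|\xi|\le R^{-1}}d\xi\Bigr)^{1/2}\le \frac{C}{R}\,\|f\|_{L^2}\le \frac{C}{R}\,\|f\|_{H^2},
\]
which has exactly the same form as the high-frequency term $CR^{-1}\|\Delta f\|_{L^2}$, while the middle annulus still contributes only $C(\ln R)^{1/2}\|\nabla f\|_{L^2}$ (the logarithm merely doubles). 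The single choice $R=\|f\|_{H^2}/\|\nabla f\|_{L^2}\ge 1$ then turns both tail terms into $C\|\nabla f\|_{L^2}$ and yields the stated inequality in one line. Replace your low-frequency discussion with this symmetric splitting and the proof is complete.
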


\begin{proof}
Using $\norm{f}{L^\infty}\leq \norm{\widehat{f}}{L^1}$, we bound $\norm{f}{L^\infty}$ as follows:
\begin{align*}
\norm{f}{L^\infty} &\leq \int_{|\xi|\leq R^{-1}}|\widehat{f}(\xi)|\,d\xi+\int_{R^{-1}\leq |\xi|\leq R}|\widehat{f}(\xi)|\,d\xi+\int_{|\xi|\ge R}|\widehat{f}(\xi)|\,d\xi \\
&\leq \norm{f}{L^2}\Big(\int_{|\xi|\leq R^{-1}}\,d\xi\Big)^{1/2}+\norm{\nabla f}{L^2}\left(\int_{R^{-1}\leq |\xi|\leq R}|\xi|^{-2}\,d\xi\right)^{1/2}+\norm{\Delta f}{L^2}\left(\int_{|\xi|\ge R}|\xi|^{-4}\,d\xi\right)^{1/2}\\
&\leq C\norm{f}{H^2}/R+C\norm{\nabla f}{L^2}(\ln{R})^{\frac{1}{2}}
\end{align*}
for any $R\geq 1$. Lemma \ref{Lemma 2.1} follows by choosing $R=\norm{f}{H^2}/\norm{\nabla f}{L^2}$.
\end{proof}

\subsection{Proof of Theorem \ref{Theorem 1.3}}

We begin with the $2\frac{1}{2}$D electron  MHD:
\eqn \label{2.5 EHM}
\partial_{t}B-\eta \Delta B+\widetilde{\nabla}\times((\widetilde{\nabla}\times B)\times B)=0,\quad \dv B=0,\quad \widetilde{\nabla}=(\partial_1,\partial_2,0).
\een
Using $B=(\partial_2\psi,-\partial_1\psi,B^3)$, (\ref{2.5 EHM}) can be written as 
\begin{subequations} \label{2.5 EHM 2}
\begin{align}
&\partial_{t}\psi-\eta\Delta\psi=-\nabla B^3\cdot\nabla^{\perp}\psi,  \label{2.5 EHM 2 a}\\
& \partial_{t}B^{3}-\eta\Delta B^{3}=\nabla\Delta\psi\cdot\nabla^{\perp}\psi.  \label{2.5 EHM 2 b}
\end{align}
\end{subequations}
We note $(\nabla\times B)^3=\partial_1 B^2-\partial_2 B^1=-\Delta\psi$. To prove Theorem \ref{Theorem 1.3}, we use the following proposition.

\begin{proposition}\label{prop1}\upshape
Let $B\in C([0,T];H^{2})$ be the solution of \eqref{2.5 EHM}. There exists $\epsilon>0$ such that if 
\eqn \label{eq:5.2}
\sup_{t\in[0,T]}\norm{\Delta\psi(t)}{L^2}^2\leq {\epsilon}\eta^2,
\een
$B$ satisfies the following bounds: 
\begin{subequations} \label{eq:bound-B1 n}
\begin{align}
&\sup_{t\in[0,T]}\norm{\nabla B(t)}{L^2}^2+\eta\int^T_0\norm{\Delta B(t)}{L^2}^2\,dt\leq \norm{\nabla B_0}{L^2}^2, \label{eq:bound-B1} \\
&\sup_{t\in[0,T]}\norm{B(t)}{H^2}^2+\eta\int^{T}_{0}\norm{\nabla B(t)}{H^2}^2\,dt \leq \norm{B_0}{H^2}^2 e^{C\eta^{-4}\norm{\nabla B_0}{L^2}^4}= \mathcal{H}_{0}, \label{eq:bound-B1 n a}\\
& \sup_{t\in[0,T]}\norm{\Delta\psi(t)}{L^2}^{2} \leq \norm{\Delta\psi_{0}}{L^2}^{2\exp{(-C\eta^{-2}\norm{\nabla B_0}{L^2}^2)}}\exp\left[\frac{C}{\eta^{2}}(1+\ln \mathcal{H}_{0})\norm{\nabla B_0}{L^2}^2\right]. \label{eq:bound-B1 n b}
\end{align}
\end{subequations}
\end{proposition}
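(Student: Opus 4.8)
The plan is to run energy estimates on \eqref{2.5 EHM} (equivalently on the $(\psi,B^3)$-system \eqref{2.5 EHM 2}) at increasing regularity, using the smallness hypothesis \eqref{eq:5.2}, i.e. $\sup_{[0,T]}\|\Delta\psi\|_{L^2}=\sup_{[0,T]}\|(\nabla\times B)^3\|_{L^2}\le\sqrt{\epsilon}\,\eta$, to absorb the dangerous nonlinear terms into the dissipation. The tools I will use repeatedly are: the splitting $J=\widetilde\nabla\times B=(\nabla^\perp B^3,\,-\Delta\psi)$; the planar identity $\Delta B=-\widetilde\nabla\times J$ (valid because $\widetilde\nabla\cdot B=\dv B=0$ in the $2\tfrac12$D setting); the two divergence-free cancellations $\widetilde\nabla\cdot\widetilde J=\widetilde\nabla\cdot\nabla^\perp B^3=0$ and $\widetilde\nabla\cdot\widetilde B=0$; and the 2D interpolation $\|f\|_{L^4}\le C\|f\|_{L^2}^{1/2}\|\nabla f\|_{L^2}^{1/2}$ together with $\|\nabla^2 g\|_{L^2}=\|\Delta g\|_{L^2}$ in $\mathbb{R}^2$.

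For \eqref{eq:bound-B1}, I would test \eqref{2.5 EHM} with $-\Delta B$, which gives
\[
\frac12\frac{d}{dt}\|\nabla B\|_{L^2}^2+\eta\|\Delta B\|_{L^2}^2=-\int\big((\widetilde J\cdot\widetilde\nabla)B\big)\cdot\Delta B+\int\big((\widetilde B\cdot\widetilde\nabla)J\big)\cdot\Delta B.
\]
Integrating the first integral by parts and using $\widetilde\nabla\cdot\widetilde J=0$ turns it into $\int(\partial_l\widetilde J^k)(\partial_kB^i)(\partial_lB^i)$; its $i=3$ part is of the form $\int\nabla^\perp B^3\cdot\nabla(\tfrac12|\nabla B^3|^2)$ and vanishes, and its $i\in\{1,2\}$ part is bounded by $C\|\Delta B^3\|_{L^2}\|\nabla\widetilde B\|_{L^4}^2\le C\|\Delta\psi\|_{L^2}\|\Delta B\|_{L^2}^2$. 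For the second integral I would substitute $\Delta B=-\widetilde\nabla\times J$, integrate by parts, and use $\widetilde\nabla\cdot\widetilde B=0$ to cancel the top-order piece; what remains is a commutator term $\int\big[(\partial_1\widetilde B\cdot\widetilde\nabla)J^2-(\partial_2\widetilde B\cdot\widetilde\nabla)J^1\big]J^3$, bounded by $C\|\nabla\widetilde B\|_{L^4}\|\nabla\widetilde J\|_{L^2}\|\Delta\psi\|_{L^4}\le C\|\Delta\psi\|_{L^2}\|\Delta B\|_{L^2}^2$. Since $\|\Delta\psi\|_{L^2}\le\sqrt{\epsilon}\,\eta$ by \eqref{eq:5.2}, picking $\epsilon$ small makes the whole right-hand side $\le\frac\eta2\|\Delta B\|_{L^2}^2$, and integrating in time gives \eqref{eq:bound-B1}.

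For \eqref{eq:bound-B1 n a} I would test \eqref{2.5 EHM} with $\Delta^2 B$ and treat the nonlinearity with the same integration-by-parts/cancellation scheme (keeping all factors at the $L^2$/$L^4$ level, since only $H^2$ regularity is available): the contributions carrying a factor $\|\Delta\psi\|_{L^2}$ are absorbed via \eqref{eq:5.2}, and the remaining one is controlled by $\frac{C}{\eta^3}\|\nabla B\|_{L^2}^2\|\Delta B\|_{L^2}^2$. This yields $\frac{d}{dt}\|\Delta B\|_{L^2}^2+\eta\|\nabla\Delta B\|_{L^2}^2\le\frac{C}{\eta^3}\|\nabla B\|_{L^2}^2\|\Delta B\|_{L^2}^2$, and Grönwall together with \eqref{eq:bound-B1} (which gives $\|\nabla B(t)\|_{L^2}^2\le\|\nabla B_0\|_{L^2}^2$ and $\eta\int_0^T\|\Delta B\|_{L^2}^2\,dt\le\|\nabla B_0\|_{L^2}^2$, hence $\int_0^T\frac{C}{\eta^3}\|\nabla B\|_{L^2}^2\|\Delta B\|_{L^2}^2\,dt\le C\eta^{-4}\|\nabla B_0\|_{L^2}^4$) produces the factor $e^{C\eta^{-4}\|\nabla B_0\|_{L^2}^4}$, i.e. $\mathcal{H}_0$; the stated dissipation bound follows by adding the $L^2$- and $H^1$-level inequalities.

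The heart of the matter is \eqref{eq:bound-B1 n b}. Testing \eqref{2.5 EHM 2 a} with $\Delta^2\psi$ gives $\frac12\frac{d}{dt}\|\Delta\psi\|_{L^2}^2+\eta\|\nabla\Delta\psi\|_{L^2}^2=\int\nabla(\nabla B^3\cdot\nabla^\perp\psi):\nabla\Delta\psi$, which I would split into a term $A$ carrying the second derivative on $B^3$ and a term $B$ carrying it on $\psi$. In $A$ I would pull out $\|\nabla^\perp\psi\|_{L^\infty}=\|\nabla\psi\|_{L^\infty}$ and bound it by the logarithmic Sobolev inequality Lemma \ref{Lemma 2.1} applied to $\nabla\psi$, using $\|\nabla^2\psi\|_{L^2}=\|\Delta\psi\|_{L^2}$ and $\|\nabla\psi\|_{H^2}^2\le C\|B\|_{H^2}^2\le C\mathcal{H}_0$ from \eqref{eq:bound-B1 n a}, to obtain $A\le\frac\eta4\|\nabla\Delta\psi\|_{L^2}^2+\frac{C}{\eta}\|\Delta B^3\|_{L^2}^2\|\Delta\psi\|_{L^2}^2\big(1+\ln(\mathcal{H}_0/\|\Delta\psi\|_{L^2}^2)\big)$; in $B$ I would use $\|\nabla^2\psi\|_{L^4}\le C\|\Delta\psi\|_{L^2}^{1/2}\|\nabla\Delta\psi\|_{L^2}^{1/2}$ and $\|\nabla B^3\|_{L^4}\le C\|\nabla B^3\|_{L^2}^{1/2}\|\Delta B^3\|_{L^2}^{1/2}$ (and \eqref{eq:5.2} to gain smallness) to get $B\le\frac\eta4\|\nabla\Delta\psi\|_{L^2}^2+\phi_2(t)\|\Delta\psi\|_{L^2}^2$ with $\phi_2=\frac{C}{\eta^3}\|\nabla B^3\|_{L^2}^2\|\Delta B^3\|_{L^2}^2$. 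Writing $X=\|\Delta\psi\|_{L^2}^2$ and $\phi_1=\frac{C}{\eta}\|\Delta B^3\|_{L^2}^2$, one is left with $X'\le\phi_1 X\big(1+\ln(\mathcal{H}_0/X)\big)+\phi_2 X$. I would first remove $\phi_2$ with the integrating factor $e^{\int_0^t\phi_2}\le e^{C\eta^{-4}\|\nabla B_0\|_{L^2}^4}$ (using \eqref{eq:bound-B1}), then run a logarithmic Grönwall on $W=1+\ln(\mathcal{H}_0/X)$ — which is $\ge 0$ since $X\le\|\nabla B\|_{L^2}^2\le\|\nabla B_0\|_{L^2}^2\le\mathcal{H}_0$ — namely $W'\ge-\phi_1 W$, so $W(t)\ge W(0)e^{-\int_0^t\phi_1}$; since $\int_0^T\phi_1\,dt\le\frac{C}{\eta^2}\|\nabla B_0\|_{L^2}^2$ by \eqref{eq:bound-B1}, exponentiating back yields exactly \eqref{eq:bound-B1 n b}. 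The two places I expect to be the main obstacle are: organizing the nonlinear terms in \eqref{eq:bound-B1} and \eqref{eq:bound-B1 n a} so that every non-absorbable contribution appears in the form $\frac{C}{\eta^3}\|\nabla B\|_{L^2}^2\|\Delta B\|_{L^2}^2$, which is precisely what the planar identity $\Delta B=-\widetilde\nabla\times J$ and the two divergence-free cancellations deliver; and, for \eqref{eq:bound-B1 n b}, keeping the algebraic remainder $\phi_2$ separated from the logarithmic term $\phi_1$ so that only $\phi_1$, with $\int_0^T\phi_1\sim\eta^{-2}\|\nabla B_0\|_{L^2}^2$, enters the exponent of $\|\Delta\psi_0\|_{L^2}$.
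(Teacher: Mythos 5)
Your proposal follows essentially the same route as the paper: an $\dot H^1$ energy estimate in which the two leading Hall contributions cancel after integration by parts (the paper packages this cancellation as the identity \eqref{eq:Delta-psi}), leaving a trilinear remainder of size $C\|\Delta\psi\|_{L^2}\|\Delta B\|_{L^2}^2$ that is absorbed by \eqref{eq:5.2}; an $\dot H^2$ estimate closed by Gr\"onwall using \eqref{eq:bound-B1}; and the bound \eqref{eq:bound-B1 n b} via Lemma \ref{Lemma 2.1} and a double-logarithmic Gr\"onwall (your $W'\ge -\phi_1 W$ is exactly the paper's $\frac{d}{dt}\ln\ln F^{-1}\ge -\frac{C}{\eta}\|\Delta B\|_{L^2}^2$).

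Two small points of divergence. First, your displayed $\dot H^2$ inequality should read $\frac{C}{\eta^3}\|\nabla B\|_{L^2}^2\|\Delta B\|_{L^2}^4$ on the right (Gr\"onwall rate $\frac{C}{\eta^3}\|\nabla B\|_{L^2}^2\|\Delta B\|_{L^2}^2$); your subsequent computation of $\int_0^T\frac{C}{\eta^3}\|\nabla B\|_{L^2}^2\|\Delta B\|_{L^2}^2\,dt\le C\eta^{-4}\|\nabla B_0\|_{L^2}^4$ makes clear this is only a typo. Second, and more substantively, your treatment of the term carrying two derivatives on $\psi$ in the $\Delta\psi$-estimate puts $\|\nabla\Delta\psi\|_{L^2}^{3/2}$ into Young's inequality and produces $\phi_2=\frac{C}{\eta^3}\|\nabla B^3\|_{L^2}^2\|\Delta B^3\|_{L^2}^2$ with $\int_0^T\phi_2\,dt\lesssim\eta^{-4}\|\nabla B_0\|_{L^2}^4$; the resulting integrating factor $e^{C\eta^{-4}\|\nabla B_0\|_{L^2}^4}$ is not literally of the form appearing in \eqref{eq:bound-B1 n b} in all regimes (e.g.\ when $\|B_0\|_{H^2}$ is very small so that $1+\ln\mathcal{H}_0$ does not dominate $\eta^{-2}\|\nabla B_0\|_{L^2}^2$). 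The paper avoids this by estimating that term as $C\|\Delta B^3\|_{L^2}\|\Delta\psi\|_{L^4}^2\le C\|\Delta B^3\|_{L^2}\|\Delta\psi\|_{L^2}\|\nabla\Delta\psi\|_{L^2}$, so only one power of $\|\nabla\Delta\psi\|_{L^2}$ is absorbed and the algebraic remainder is $\frac{C}{\eta}\|\Delta B\|_{L^2}^2\|\Delta\psi\|_{L^2}^2$, which folds into the same coefficient as your $\phi_1$ (using $1\le 1+\ln(\mathcal{H}_0/\|\Delta\psi\|_{L^2}^2)$) and yields \eqref{eq:bound-B1 n b} exactly. Adopting that one interpolation makes your argument coincide with the paper's.
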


\begin{proof}
We first note the energy inequality:
\eqn\label{EMHD energy}
\left\|B(t)\right\|^{2}_{L^{2}}+2\eta\int^{t}_{0}\left\|\nabla B(\tau)\right\|^{2}_{L^{2}}d\tau\leq \left\|B_{0}\right\|^{2}_{L^{2}}.
\een
By taking the $L^2$ inner product of \eqref{2.5 EHM} with $-\Delta B$, we have 
\[
\begin{split}
\frac{1}{2}\frac{d}{dt}\norm{\nabla B}{L^2}^2+\eta\norm{\Delta B}{L^2}^2 &=\int\widetilde{\nabla}\times((\widetilde{\nabla}\times B)\times B)\cdot\Delta B \\
&=-\int\Delta(\nabla B^3\cdot\nabla^{\perp}\psi)\Delta\psi-\int(\nabla\Delta\psi\cdot\nabla^{\perp}\psi)\Delta B^3 =\text{(I)}.
\end{split}
\]
By applying the integration by parts two times, we see that 
\eqn \label{eq:Delta-psi}
\int \Delta(\nabla B^{3}\cdot\nabla^{\perp}\psi)\Delta\psi=-\int(\nabla\Delta \psi\cdot\nabla^{\perp} \psi)\Delta B^3+2\int(\nabla\partial_{k}B^3\cdot\nabla^{\perp}\partial_{k}\psi)\Delta\psi
\een
and thus $\text{(I)}$ is estimated as 
\eqn \label{eq:5.8}
\text{(I)}=-2\int(\nabla\partial_{k}B^3\cdot\nabla^{\perp}\partial_{k}\psi)\Delta\psi\leq C\norm{\Delta B^{3}}{L^{2}}\norm{\Delta \psi}{L^{4}}^{2}\leq C\norm{\Delta\psi}{L^2}\norm{\Delta B}{L^2}^2. 
\een
Therefore, if (\ref{eq:5.2}) holds with $2C\sqrt{\epsilon}\leq 1$, we obtain
\eqn \label{eq:4.8 dd}
\frac{d}{dt}\norm{\nabla B}{L^2}^2+\eta\norm{\Delta B}{L^2}^2\leq 0
\een
which implies \eqref{eq:bound-B1}.  By taking the $L^2$ inner product of (\ref{2.5 EHM}) with $\Delta^2 B$, we are led to 
\begin{equation}\label{eq:II3}
\begin{aligned}
\frac{1}{2}\frac{d}{dt}\norm{\Delta B}{L^2}^2&+\eta\norm{\nabla\Delta B}{L^2}^2 =-\int\Delta(J\times B)\cdot\Delta J \\
&= -\int(2\partial_{k}J\times\partial_{k}B+J\times\Delta B)\cdot\Delta J \leq C\norm{\nabla B}{L^4}\norm{\nabla^2B}{L^4}\norm{\Delta J}{L^2} \\
&\leq C\norm{\nabla B}{L^2}^{1/2}\norm{\Delta B}{L^2}\norm{\nabla\Delta B}{L^2}^{3/2} \leq \frac{\eta}{2}\norm{\nabla\Delta B}{L^2}^2+\frac{C}{\eta^{3}}\norm{\nabla B}{L^2}^2\norm{\Delta B}{L^2}^4
\end{aligned}
\end{equation}
and so we obtain
\eqn \label{eq:4.10 dd}
\frac{d}{dt}\norm{\Delta B}{L^2}^2+\eta\norm{\nabla\Delta B}{L^2}^2\leq \frac{C}{\eta^{3}}\norm{\nabla B}{L^2}^2\norm{\Delta B}{L^2}^2\norm{\Delta B}{L^2}^2.
\een
By Gr\"onwall's inequality and \eqref{eq:bound-B1}, we also derive 
\eqn \label{eq:bound-B2}
\sup_{t\in[0,T]}\norm{\Delta B(t)}{L^2}^2+\eta\int^T_0\norm{\nabla \Delta B(t)}{L^2}^2\,dt \leq \norm{\Delta B_0}{L^2}^2 e^{C\eta^{-4}\norm{\nabla B_0}{L^2}^4}.
\een
By (\ref{EMHD energy}), \eqref{eq:bound-B1}, and \eqref{eq:bound-B2}, (\ref{eq:bound-B1 n a}) follows.

\vspace{1ex}

 We now bound $\Delta \psi$.  By taking the $L^{2}$ inner product of \eqref{2.5 EHM 2 a} with $\Delta^2\psi$ and using \eqref{eq:Delta-psi}, we have
\begin{align*}
\frac{1}{2}\frac{d}{dt}\norm{\Delta\psi}{L^2}^2&+\eta\norm{\nabla\Delta\psi}{L^2}^2=\int(\nabla\Delta\psi\cdot\nabla^{\perp}\psi)\Delta B^3-2\int(\nabla\partial_k B^3\cdot\nabla^{\perp}\partial_k\psi)\Delta\psi\\
&\leq C\norm{\nabla\psi}{L^{\infty}}\norm{\Delta B^{3}}{L^2}\norm{\nabla\Delta\psi}{L^2}+C\norm{\Delta B^3}{L^2}\norm{\Delta\psi}{L^4}^2\\
&\leq C\left(\norm{\nabla\psi}{L^{\infty}}+\norm{\Delta\psi}{L^2}\right)\norm{\Delta B^{3}}{L^2}\norm{\nabla\Delta\psi}{L^2}\\
&\leq \frac{\eta}{2}\norm{\nabla\Delta\psi}{L^2}^2+\frac{C}{\eta}\left(\norm{\nabla\psi}{L^{\infty}}^2+\norm{\Delta\psi}{L^2}^2\right)\norm{\Delta B}{L^2}^2.
\end{align*}
From this, 
\eqn \label{psi with log}
\frac{d}{dt}\norm{\Delta\psi}{L^2}^2+\eta\norm{\nabla\Delta\psi}{L^2}^2\leq \frac{C}{\eta}\left(\norm{\nabla\psi}{L^{\infty}}^2+\norm{\Delta\psi}{L^2}^2\right)\norm{\Delta B}{L^2}^2.
\een
By Lemma \ref{Lemma 2.1} and (\ref{eq:bound-B1 n a}),
\begin{align*}
\frac{d}{dt}\norm{\Delta\psi}{L^2}^2+\eta\norm{\nabla\Delta\psi}{L^2}^2 &\leq \frac{C}{\eta}\left[1+\ln{\left(\frac{\norm{\nabla\psi}{H^2}^2}{\norm{\Delta \psi}{L^2}^2}\right)}\right]\norm{\Delta B}{L^2}^2\norm{\Delta\psi}{L^2}^2 \\
&\leq \frac{C}{\eta}\left(1+\ln{\mathcal{H}_0}+\ln{\norm{\Delta\psi}{L^2}^{-2}}\right)\norm{\Delta B}{L^2}^2\norm{\Delta\psi}{L^2}^2.
\end{align*}
Let  
\[
F(t)=\norm{\Delta\psi(t)}{L^2}^{2} \exp\left[-\frac{C}{\eta}(1+\ln{\mathcal{H}_0})\int^{t}_{0}\norm{\Delta B(\tau)}{L^2}^2\,d\tau\right].
\]
Then $F$ satisfies 
\begin{align*}
\frac{d}{dt}F &= \left(\frac{d}{dt}\norm{\Delta\psi}{L^2}^2-\frac{C}{\eta}(1+\ln{\mathcal{H}_0})\norm{\Delta B}{L^2}^2\norm{\Delta\psi}{L^2}^2\right)\exp\left[-\frac{C}{\eta}(1+\ln{\mathcal{H}_0})\int^{t}_{0}\norm{\Delta B(\tau)}{L^2}^2\,d\tau\right] \\
&\leq \frac{C}{\eta}\ln{\norm{\Delta\psi}{L^2}^{-2}}\norm{\Delta B}{L^2}^2\norm{\Delta\psi}{L^2}^2 \exp\left[-\frac{C}{\eta}(1+\ln{\mathcal{H}_0})\int^{t}_{0}\norm{\Delta B(\tau)}{L^2}^2\,d\tau\right] \\
&\leq \frac{C}{\eta}\norm{\Delta B}{L^2}^2F\ln{F^{-1}}.
\end{align*}
By solving this inequality, we deduce that 
\[
\frac{d}{dt}\left(\ln\ln  F^{-1}\right)\geq -\frac{C}{\eta}\norm{\Delta B}{L^2}^2.
\]
Integrating this in time and using \eqref{eq:bound-B1}, we bound $F$ as 
\[
F(t)\leq \exp\left[\ln F_{0} \exp\left(-\frac{C}{\eta^{2}} \norm{\nabla B_0}{L^2}^2\right)\right]\leq F_{0}^{\exp{(-C\eta^{-2}\norm{\nabla B_0}{L^2}^2)}}
\]
from this we find 
\[
\sup_{t\in[0,T]}\norm{\Delta\psi(t)}{L^2}^{2}\leq \norm{\Delta\psi_{0}}{L^2}^{2\exp{(-C\eta^{-2}\norm{\nabla B_0}{L^2}^2)}}\exp\left[\frac{C}{\eta^{2}}(1+\ln \mathcal{H}_{0})\norm{\nabla B_0}{L^2}^2\right].
\]
This completes the proof of Proposition \ref{prop1}.
\end{proof}

Theorem \ref{Theorem 1.3} follows by Proposition \ref{prop1} and the initial condition
\eqn \label{smallness from log}
\norm{\Delta\psi_{0}}{L^2}^{2\exp{(-C\eta^{-2}\norm{\nabla B_0}{L^2}^2)}}\exp\left[\frac{C}{\eta^{2}}(1+\ln \mathcal{H}_{0})\norm{\nabla B_0}{L^2}^2\right]\leq \epsilon \eta^{2}.
\een

\begin{remark}\upshape \label{remark 4.1}
In the proof of Theorem \ref{Theorem 1.3}. we crucially use Lemma \ref{Lemma 2.1} to derive the smallness condition  (\ref{smallness from log}). If we use $\left\|\nabla \psi\right\|^{2}_{L^{\infty}}\leq C\left\|\nabla \psi\right\|_{L^{2}}\left\|\nabla \Delta \psi\right\|_{L^{2}}$, instead of Lemma \ref{Lemma 2.1}, we can obtain 
\[
\frac{d}{dt}\norm{\Delta\psi}{L^2}^2+\eta\norm{\nabla\Delta\psi}{L^2}^2 \leq \frac{C}{\eta^{2}}\left\|\nabla \psi\right\|_{L^{2}}^2\norm{\Delta B}{L^2}^4 +\frac{C}{\eta}\norm{\Delta B}{L^2}^2\norm{\Delta\psi}{L^2}^2.
\]
Integrating this in time with using (\ref{eq:bound-B1}) and (\ref{eq:bound-B2})
\eqn \label{psi without log}
\begin{split}
&\norm{\Delta \psi(t)}{L^2}^2+\eta\int^{t}_{0}\norm{\nabla\Delta\psi(\tau)}{L^2}^2d\tau \\
&\leq  \left(\norm{\Delta\psi_{0}}{L^2}^2+\frac{C}{\eta^{3}}\left\|B_{0}\right\|_{L^{2}}^2 \left\|\nabla B_{0}\right\|^2_{L^{2}}\norm{\Delta B_0}{L^2}^2 e^{C\eta^{-4}\norm{\nabla B_0}{L^2}^4}\right)e^{C\eta^{-2}\norm{\nabla B_0}{L^2}^2}
\end{split}
\een
for all $t>0$. So, there exists a unique global-in-time solution by Proposition \ref{prop1} when
\[
\left(\norm{\Delta\psi_{0}}{L^2}^2+\frac{C}{\eta^{3}}\left\|B_{0}\right\|_{L^{2}}^2 \left\|\nabla B_{0}\right\|^2_{L^{2}}\norm{\Delta B_0}{L^2}^2 e^{C\eta^{-4}\norm{\nabla B_0}{L^2}^4}\right)e^{C\eta^{-2}\norm{\nabla B_0}{L^2}^2} \leq \epsilon \eta^{2},
\]
but we cannot determine the smallness condition only in terms of $\psi_{0}$.
\end{remark}

\subsection{Proof of Theorem \ref{Theorem 1.4}}
We recall \eqref{H MHD New}:
\begin{subequations}\label{2.5 H MHD New}
\begin{align}
&\partial_{t} u-\nu\Delta u+u\cdot\widetilde{\nabla} u-(\widetilde{\nabla} \times  B)\times B+\widetilde{\nabla} p=0, \label{2.5 H MHD New a} \\
&\partial_{t} B-\eta\Delta B+u\cdot\widetilde{\nabla} B-B\cdot\widetilde{\nabla} u+\widetilde{\nabla} \times ((\widetilde{\nabla} \times  B)\times B)=0, \label{2.5 H MHD New b}\\
&\dv u=\dv  B=0.
\end{align}
\end{subequations}
Using $u=(\partial_2 \phi,-\partial_1\phi,u^3)$ and $B=(\partial_2\psi,-\partial_1\psi,B^3)$, we reformulate \eqref{2.5 H MHD New b} as 
\begin{subequations}\label{2.5-HMHD-alter N}
\begin{align}
&\partial_{t}\psi-\eta\Delta\psi=-\nabla B^3\cdot\nabla^{\perp}\psi+\nabla\phi\cdot\nabla^{\perp}\psi, \label{2.5-HMHD-alter N a}\\
&\partial_{t}B^{3}-\eta\Delta B^{3}=\nabla\Delta\psi\cdot\nabla^{\perp}\psi-\nabla B^{3}\cdot\nabla^{\perp}\phi+\nabla u^{3}\cdot\nabla^{\perp}\psi. 
\end{align} 
\end{subequations}
Theorem \ref{Theorem 1.4} can be proved as Theorem \ref{Theorem 1.3} by using Proposition \ref{prop2} below.

\begin{proposition}\label{prop2}\upshape
Let $u, B\in C([0,T];H^{2}(\mathbb{R}^2))$ be a solution of \eqref{2.5-HMHD-alter N}. Let
\[
\begin{split}
\mathcal{H}_1&=\left(\norm{\nabla B_0}{L^2}^2+\frac{C\mathcal{E}_0\mathcal{E}_2}{\eta\min{(\nu,\eta)}}\right)\exp{\left(\frac{C\mathcal{E}_0}{\nu\eta}\right)},\qquad \mathcal{S}_1= \frac{\mathcal{H}_1}{\eta^2}+\frac{\mathcal{E}_0}{\nu\eta},\\
\mathcal{H}_2&= \left(\norm{B_0}{H^2}^2+\frac{C\mathcal{E}_0\mathcal{E}_2}{\eta\min{(\nu,\eta)}}+\frac{C\mathcal{E}_0\mathcal{E}_2^2}{\eta^4}\right)\exp{\left(\frac{C\mathcal{E}_2}{\eta\min{(\nu,\eta)}}+\frac{C\mathcal{H}_1^2}{\eta^4}\right)}+\norm{u_0}{L^2}^2,
\end{split}
\]
where $\mathcal{E}_0$ and $\mathcal{E}_2$ are defined in \eqref{eq:energy-ineq} and \eqref{eq:2.5D-bound-2}, respectively. There exists $\epsilon>0$ such that if
\eqn \label{smallness proposition 2}
\sup_{t\in[0,T]}\norm{\Delta\psi(t)}{L^2}^2\leq {\epsilon} \eta^2,
\een
$B$ satisfies the following uniform-in-time bounds:
\begin{subequations}
\begin{align}
&\sup_{t\in[0,T]}\norm{\nabla B(t)}{L^2}^2+\eta\int^T_0\norm{\Delta B(t)}{L^2}^2\,dt \leq \mathcal{H}_1, \label{Full H1 bound HMHD} \\
&\sup_{t\in[0,T]}\norm{B(t)}{H^2}^2+\eta\int^{T}_{0}\norm{\nabla B(t)}{H^2}^2\,dt \leq \mathcal{H}_{2}, \label{Full H2 bound HMHD},\\
& \sup_{t\in[0,T]}\norm{\Delta\psi(t)}{L^2}^{2}\leq \norm{\Delta\psi_{0}}{L^2}^{2\exp{(-C\mathcal{S}_1)}}e^{C\mathcal{S}_1(1+\ln \mathcal{H}_{2})}. \label{Full H2 bound psi  HMHD}
\end{align}
\end{subequations}
\end{proposition}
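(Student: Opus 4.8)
My plan is to mirror the three-step argument behind Proposition \ref{prop1}, viewing \eqref{2.5-HMHD-alter N} (equivalently \eqref{2.5 H MHD New}) as a perturbation of the $2\frac{1}{2}$D electron MHD system \eqref{2.5 EHM 2}. The crucial point is that the regularity of $u$ supplied by Theorem \ref{Theorem 1.2} is precisely what is needed to control every $u$-dependent term: from \eqref{eq:energy-ineq} and \eqref{eq:2.5D-bound-2} one has, uniformly on $[0,T]$, $\sup_t(\norm{u}{L^2}^2+\norm{\omega}{L^2}^2)\le \mathcal{E}_0+\mathcal{E}_2$, $\int_0^T\norm{\nabla u}{L^2}^2\,dt\le \mathcal{E}_0/(2\nu)$ and $\int_0^T\norm{\nabla\omega}{L^2}^2\,dt\le \mathcal{E}_2/\min(\nu,\eta)$, together with $\norm{\nabla u}{L^2}\le C\norm{\omega}{L^2}$ from \eqref{curl-Lp}; since all these time integrals are finite independently of $T$, the Gr\"onwall estimates below will be uniform in time. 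As in Proposition \ref{prop1}, the smallness hypothesis \eqref{smallness proposition 2} is used only to absorb the bad part of the Hall term in the $\dot{H}^1$ estimate of $B$.

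First I would establish the $\dot{H}^1$ bound \eqref{Full H1 bound HMHD}. Testing \eqref{2.5 H MHD New b} with $-\Delta B$, the Hall term is handled exactly as in \eqref{eq:5.8}, giving a contribution $\le C\norm{\Delta\psi}{L^2}\norm{\Delta B}{L^2}^2$ which is absorbed into $\tfrac{\eta}{4}\norm{\Delta B}{L^2}^2$ once $\epsilon$ is small; the genuinely new terms $\int (u\cdot\widetilde{\nabla} B)\cdot\Delta B$ and $\int(B\cdot\widetilde{\nabla} u)\cdot\Delta B$ are integrated by parts (using $\dv u=\dv B=0$) and estimated via the 2D inequalities $\norm{f}{L^4}\le C\norm{f}{L^2}^{1/2}\norm{\nabla f}{L^2}^{1/2}$, $\norm{f}{L^\infty}\le C\norm{f}{L^2}^{1/2}\norm{\Delta f}{L^2}^{1/2}$ and Young's inequality, producing a Gr\"onwall coefficient $\lesssim \tfrac{1}{\eta}\norm{\nabla u}{L^2}^2$ and a forcing term whose time integral is $\lesssim \mathcal{E}_0\mathcal{E}_2/(\eta\min(\nu,\eta))$; Gr\"onwall's inequality then gives \eqref{Full H1 bound HMHD} with $\mathcal{H}_1$. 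Testing \eqref{2.5 H MHD New b} with $\Delta^2 B$, the Hall term is treated as in \eqref{eq:II3} (yielding the Gr\"onwall coefficient $\tfrac{C}{\eta^3}\norm{\nabla B}{L^2}^2\norm{\Delta B}{L^2}^2$), the $u$-terms are estimated as before, and Gr\"onwall with the $\dot{H}^1$ bound --- i.e.\ $\sup_t\norm{\nabla B}{L^2}^2\le \mathcal{H}_1$ and $\int_0^T\norm{\Delta B}{L^2}^2\,dt\le \mathcal{H}_1/\eta$ --- together with \eqref{eq:energy-ineq} yields \eqref{Full H2 bound HMHD} with the exponent $\exp\big(C\mathcal{H}_1^2/\eta^4+C\mathcal{E}_2/(\eta\min(\nu,\eta))\big)$ and $\mathcal{H}_2$.

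The main obstacle is the estimate for $\Delta\psi=-(\nabla\times B)^3$. Testing \eqref{2.5-HMHD-alter N a} with $\Delta^2\psi$ and using the identity \eqref{eq:Delta-psi}, the part coming from $-\nabla B^3\cdot\nabla^\perp\psi$ reproduces the right-hand side of \eqref{psi with log}, while the new term $\int(\nabla\phi\cdot\nabla^\perp\psi)\Delta^2\psi$ I would integrate by parts twice and then expand $\Delta(\nabla\phi\cdot\nabla^\perp\psi)=\nabla\Delta\phi\cdot\nabla^\perp\psi+2\nabla\partial_k\phi\cdot\nabla^\perp\partial_k\psi+\nabla\phi\cdot\nabla^\perp\Delta\psi$. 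The top-order piece contributes $\int(\nabla\phi\cdot\nabla^\perp\Delta\psi)\Delta\psi=0$ by the transport structure ($\dv\nabla^\perp\phi=0$); the first piece, after one further integration by parts, becomes $-\int\Delta\phi\,(\nabla^\perp\psi\cdot\nabla\Delta\psi)$, which by $\norm{\Delta\phi}{L^2}=\norm{\omega^3}{L^2}\le C\norm{\nabla u}{L^2}$ and Young's inequality is $\le \tfrac{\eta}{8}\norm{\nabla\Delta\psi}{L^2}^2+\tfrac{C}{\eta}\norm{\nabla u}{L^2}^2\norm{\nabla\psi}{L^\infty}^2$; and the second piece is $\le \tfrac{\eta}{8}\norm{\nabla\Delta\psi}{L^2}^2+\tfrac{C}{\eta^3}\norm{u}{L^2}^2\norm{\nabla u}{L^2}^2\norm{\Delta\psi}{L^2}^2$ by the 2D $L^4$ interpolation. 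The delicate point is precisely to organize these integrations by parts so that \emph{only} the low-order quantities $\norm{\Delta B}{L^2}$, $\norm{\nabla u}{L^2}$ and $\norm{u}{L^2}$ --- all with $T$-uniform time integrals coming from \eqref{eq:energy-ineq}, \eqref{eq:2.5D-bound-2} and the $\dot{H}^1$, $\dot{H}^2$ bounds --- survive, with no norm of $u$ carrying a non-$L^2$-in-time integral left over. Then Lemma \ref{Lemma 2.1} applied to $\nabla\psi$, together with $\norm{\nabla\psi}{H^2}^2\le C\norm{B}{H^2}^2\le C\mathcal{H}_2$, gives a differential inequality
\[
\frac{d}{dt}\norm{\Delta\psi}{L^2}^2+\eta\norm{\nabla\Delta\psi}{L^2}^2\le \frac{C}{\eta}\Big(\norm{\Delta B}{L^2}^2+\norm{\nabla u}{L^2}^2+\tfrac{1}{\eta^2}\norm{u}{L^2}^2\norm{\nabla u}{L^2}^2\Big)\big(1+\ln\mathcal{H}_2+\ln\norm{\Delta\psi}{L^2}^{-2}\big)\norm{\Delta\psi}{L^2}^2,
\]
whose coefficient has time integral $\le C\mathcal{S}_1$. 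Finally I would run the nonlinear Gr\"onwall argument from the proof of Proposition \ref{prop1} verbatim --- setting $F(t)=\norm{\Delta\psi(t)}{L^2}^2\exp[-\tfrac{C}{\eta}(1+\ln\mathcal{H}_2)\int_0^t(\cdots)\,d\tau]$, deducing $\tfrac{d}{dt}(\ln\ln F^{-1})\ge -\tfrac{C}{\eta}(\cdots)$ and integrating in time --- to reach \eqref{Full H2 bound psi  HMHD}.

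With Proposition \ref{prop2} in hand, Theorem \ref{Theorem 1.4} follows exactly as Theorem \ref{Theorem 1.3} follows from Proposition \ref{prop1}: local well-posedness in $H^2(\mathbb{R}^2)$ gives a solution on a maximal interval $[0,T^*)$ obeying the bounds of Theorem \ref{Theorem 1.2}, and on any subinterval on which $\sup\norm{\Delta\psi}{L^2}^2\le \epsilon\eta^2$ Proposition \ref{prop2} yields $\sup\norm{B}{H^2}^2\le \mathcal{H}_2$ and the bound \eqref{Full H2 bound psi  HMHD}; since the hypothesis $\norm{(\nabla\times B_0)^3}{L^2}^{2\exp(-C\mathcal{S}_1)}e^{C\mathcal{S}_1(1+\ln\mathcal{H}_2)}\le \epsilon\eta^2$ forces the right-hand side of \eqref{Full H2 bound psi  HMHD} to stay below $\epsilon\eta^2$, a continuation argument closes the bootstrap and gives $T^*=\infty$.
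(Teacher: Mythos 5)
Your proposal follows the paper's proof essentially step for step: the $\dot H^1$ and $\dot H^2$ bounds for $B$ with the Hall contributions handled exactly as in \eqref{eq:5.8} and \eqref{eq:II3} and the $u$-terms controlled through \eqref{eq:energy-ineq} and \eqref{eq:2.5D-bound-2}, followed by the $\Delta\psi$ estimate via Lemma \ref{Lemma 2.1} and the double-logarithmic Gr\"onwall argument with $F(t)=\norm{\Delta\psi(t)}{L^2}^2\exp[-\tfrac{C}{\eta}(1+\ln\mathcal{H}_2)\int_0^t(\cdots)]$. The continuation argument closing Theorem \ref{Theorem 1.4} is also the paper's.

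The one place you deviate is the mixed-derivative piece of the new term, $2\int(\nabla\partial_k\phi\cdot\nabla^{\perp}\partial_k\psi)\Delta\psi$, which you claim is bounded by $\tfrac{\eta}{8}\norm{\nabla\Delta\psi}{L^2}^2+\tfrac{C}{\eta^{3}}\norm{u}{L^2}^2\norm{\nabla u}{L^2}^2\norm{\Delta\psi}{L^2}^2$ ``by the 2D $L^4$ interpolation.'' This does not follow from $\norm{f}{L^4}^2\leq C\norm{f}{L^2}\norm{\nabla f}{L^2}$: applying it to $\nabla^2\phi$ produces $\norm{\nabla^3\phi}{L^2}\sim\norm{\nabla\omega}{L^2}$, whose time integral is only controlled quadratically and would leave a factor growing in $T$; obtaining $\norm{u}{L^2}^{1/2}\norm{\nabla u}{L^2}^{1/2}$ instead would require a genuine $\dot H^{-1/2}$--$\dot H^{1/2}$ product/duality estimate that you do not supply, and even granting it the Gr\"onwall coefficient acquires an extra $\eta^{-3}\norm{u}{L^2}^2\norm{\nabla u}{L^2}^2$ whose integral is $\mathcal{E}_0^2/(\nu\eta^3)$, so you would not recover \eqref{Full H2 bound psi  HMHD} with the stated $\mathcal{S}_1$. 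The fix is the paper's (symmetric) placement of the $L^4$ norms on the two $\psi$-factors: $2\int(\nabla\partial_k\phi\cdot\nabla^{\perp}\partial_k\psi)\Delta\psi\leq C\norm{\Delta\phi}{L^2}\norm{\Delta\psi}{L^4}^2\leq C\norm{\nabla u}{L^2}\norm{\Delta\psi}{L^2}\norm{\nabla\Delta\psi}{L^2}$, which after Young's inequality keeps the Gr\"onwall coefficient as $\tfrac{C}{\eta}\norm{\nabla u}{L^2}^2$, whose time integral is already accounted for in $\mathcal{S}_1$. With that single correction your argument coincides with the paper's.
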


\begin{proof}
By taking the $L^2$ inner product of \eqref{2.5 H MHD New b} with $-\Delta B$, we have
\[
\begin{split}
\frac{1}{2}\frac{d}{dt}\norm{\nabla B}{L^2}^2+\eta\norm{\Delta B}{L^2}^2&= \int(u\cdot\widetilde{\nabla} B-B\cdot\widetilde{\nabla} u )\cdot\Delta B+\int\widetilde{\nabla}\times((\widetilde{\nabla}\times B)\times B)\cdot\Delta B\\
&=\text{(I)+(II)}.
\end{split}
\]
We bound (II) as \eqref{eq:5.8} and we bound
\begin{align*}
\text{(I)}&=-\int(\partial_k u\cdot\widetilde{\nabla} B)\cdot\partial_k B-\int(B\cdot\widetilde{\nabla} u)\cdot\Delta B \leq C\norm{\nabla u}{L^2}\norm{\nabla B}{L^4}^2+C\norm{\nabla u}{L^4}\norm{B}{L^{4}}\norm{\Delta B}{L^2} \\
&\leq C\norm{\nabla u}{L^2}\norm{\nabla B}{L^2}\norm{\Delta B}{L^2}+C\norm{\Delta u}{L^2}\norm{B}{L^2}\norm{\Delta B}{L^2} \\
&\leq \frac{\eta}{4}\norm{\Delta B}{L^2}^2+\frac{C}{\eta}\norm{\nabla u}{L^2}^2\norm{\nabla B}{L^2}^2+\frac{C}{\eta}\norm{B}{L^2}^2\norm{\Delta u}{L^2}^2.
\end{align*}
Hence, if $4C\sqrt{\epsilon}\leq 1$ in \eqref{smallness proposition 2}, we obtain
\[
\frac{d}{dt}\norm{\nabla B}{L^2}^2+\eta\norm{\Delta B}{L^2}^2\leq \frac{C}{\eta}\norm{\nabla u}{L^2}^2\norm{\nabla B}{L^2}^2+\frac{C}{\eta}\norm{B}{L^2}^2\norm{\nabla \omega}{L^2}^2.
\]
By Gr\"onwall's inequality with \eqref{eq:energy-ineq} and \eqref{eq:2.5D-bound-2}, we derive \eqref{Full H1 bound HMHD} as follows:
\[
\begin{split}
& \norm{\nabla B(t)}{L^2}^2+\eta\int^{t}_{0}\norm{\Delta B(\tau)}{L^2}^2\,d\tau \\
& \leq \left[\norm{\nabla B_{0}}{L^2}^2+\frac{C}{\eta}\int^{t}_{0}\norm{B(\tau)}{L^2}^2\norm{\nabla \omega(\tau)}{L^2}^2\,d\tau\right] \exp\left[\frac{C}{\eta}\int^{t}_{0}\norm{\nabla u(\tau)}{L^2}^2\,d\tau\right] \leq \mathcal{H}_{1}.
\end{split}
\]
By taking $L^2$ inner product of \eqref{2.5 H MHD New b} with $\Delta^2 B$, we have
\[
\frac{1}{2}\frac{d}{dt}\norm{\Delta B}{L^2}^2+\eta\norm{\nabla\Delta B}{L^2}^2 =-\int(u\cdot\widetilde{\nabla} B-B\cdot\widetilde{\nabla} u)\cdot\Delta^2 B-\int\widetilde{\nabla}\times((\widetilde{\nabla}\times B)\times B))\cdot\Delta^2 B =\text{(III) +(IV)}.
\]
From \eqref{eq:II3}, we already bound (IV) as 
\[
\text{(IV)}\leq \frac{\eta}{4}\norm{\nabla\Delta B}{L^2}^2+\frac{C}{\eta^3}\norm{\nabla B}{L^2}^2\norm{\Delta B}{L^2}^4,
\]
and we bound
\begin{align*}
\text{(III)} & =-\int(\Delta u\cdot\widetilde{\nabla} B+2\partial_k u\cdot\widetilde{\nabla} \partial_k B)\cdot\Delta B -\int(\partial_k B\cdot\widetilde{\nabla} u+B\cdot\widetilde{\nabla}\partial_k u)\cdot\partial_k\Delta B\\
&\leq C\norm{\Delta u}{L^2}\left(\norm{\nabla B}{L^{\infty}}\norm{\Delta B}{L^2}+\norm{B}{L^\infty}\norm{\nabla\Delta B}{L^2}\right)+C\norm{\nabla u}{L^2}\left(\norm{\Delta B}{L^4}^2+\norm{\nabla B}{L^\infty}\norm{\nabla\Delta B}{L^2}\right) \\
&\leq C\norm{\Delta u}{L^2}\norm{B}{L^2}^{1/2}\norm{\Delta B}{L^2}^{1/2}\norm{\nabla\Delta B}{L^2}+C\norm{\nabla u}{L^2}\norm{\nabla B}{L^2}^{1/2}\norm{\nabla\Delta B}{L^2}^{3/2} \\
&\leq \frac{\eta}{4}\norm{\nabla\Delta B}{L^2}^2+\frac{C}{\eta}\left(\norm{B}{L^2}^2\norm{\Delta u}{L^2}^2+\norm{\Delta u}{L^2}^2\norm{\Delta B}{L^2}^2\right)+\frac{C}{\eta^3}\norm{\nabla u}{L^2}^4\norm{\nabla B}{L^2}^2.
\end{align*}
From these two bounds, we derive
\[
\begin{split}
\frac{d}{dt}\norm{\Delta B}{L^2}^2+\eta\norm{\nabla\Delta B}{L^2}^2 &\leq \frac{C}{\eta}\norm{\Delta u}{L^2}^2\norm{\Delta B}{L^2}^2+\frac{C}{\eta^3}\norm{\nabla B}{L^2}^2\norm{\Delta B}{L^2}^4\\
&+\frac{C}{\eta}\norm{B}{L^2}^2\norm{\Delta u}{L^2}^2+\frac{C}{\eta^3}\norm{\nabla u}{L^2}^4\norm{\nabla B}{L^2}^2.
\end{split}
\]
By Gr\"onwall's inequality and using \eqref{eq:energy-ineq}, \eqref{eq:2.5D-bound-2} and \eqref{Full H1 bound HMHD}, we get 
\eqn \label{Full H2 bound}
\begin{split}
&\norm{\Delta B(t)}{L^2}^2+\eta\int^t_0\norm{\nabla\Delta B(\tau)}{L^2}^2\,d\tau\\
& \leq \left(\norm{\Delta B_0}{L^2}^2 +\frac{C}{\eta}\int^{t}_{0} \norm{B(\tau)}{L^2}^2\norm{\Delta u(\tau)}{L^2}^2d\tau +\frac{C}{\eta^{3}}\int^{t}_{0}\norm{\nabla u(\tau)}{L^2}^4\norm{\nabla B(\tau)}{L^2}^2 d\tau\right)\\
&\times  \exp\left[\frac{C}{\eta}\int^{t}_{0}\norm{\Delta u(\tau)}{L^2}^2d\tau +\frac{C}{\eta^3}\int^{t}_{0}\norm{\nabla B(\tau)}{L^2}^2\norm{\Delta B(\tau)}{L^2}^2 d\tau\right]\\
& \leq \left(\norm{\Delta B_0}{L^2}^2+\frac{C\mathcal{E}_0\mathcal{E}_2}{\eta\min{(\nu,\eta)}}+\frac{C\mathcal{E}_0\mathcal{E}_2^2}{\eta^4}\right)\exp\left[\frac{C\mathcal{E}_2}{\eta\min{(\nu,\eta)}}+\frac{C\mathcal{H}_1^2}{\eta^4}\right].
\end{split}
\een

By \eqref{eq:energy-ineq}, \eqref{Full H1 bound HMHD} and \eqref{Full H2 bound}, we arrive at \eqref{Full H2 bound HMHD}.

\vspace{1ex}

We now bound $\Delta \psi$. By taking $L^2$ inner product of \eqref{2.5-HMHD-alter N a} with $\Delta^2\psi$, we have
\eqn \label{eq:4.18}
\frac{1}{2}\frac{d}{dt}\norm{\Delta\psi}{L^2}^2+\eta\norm{\nabla\Delta\psi}{L^2}^2=-\int (\nabla B^3\cdot\nabla^{\perp}\psi) \Delta^2\psi+\int (\nabla \phi \cdot\nabla^{\perp}\psi) \Delta^{2}\psi.
\een
We already bound the first term on the right-hand side of (\ref{eq:4.18}) in the proof of Proposition \ref{prop1}: 
\[
-\int (\nabla B^3\cdot\nabla^{\perp}\psi) \Delta^2\psi \leq \frac{\eta}{4}\norm{\nabla\Delta\psi}{L^2}^2 +\frac{C}{\eta}\left(\norm{\nabla\psi}{L^\infty}^2+\norm{\Delta\psi}{L^2}^2\right)\norm{\Delta B^3}{L^2}^2.
\]
Similarly, we bound the second term on the right-hand side of (\ref{eq:4.18}) as follows:
\[
\int (\nabla \phi \cdot\nabla^{\perp}\psi) \Delta^{2}\psi \leq \frac{\eta}{4}\norm{\nabla\Delta\psi}{L^2}^2 +\frac{C}{\eta}\left(\norm{\nabla\psi}{L^\infty}^2+\norm{\Delta\psi}{L^2}^2\right)\norm{\Delta \phi}{L^2}^2.
\]
Combining these two bounds, we have
\[
\frac{d}{dt}\norm{\Delta\psi}{L^2}^2+\eta\norm{\nabla\Delta\psi}{L^2}^2\leq \frac{C}{\eta}\left(\norm{\nabla\psi}{L^\infty}^2+\norm{\Delta\psi}{L^2}^2\right)\left(\norm{\Delta B}{L^2}^2+\norm{\nabla u}{L^2}^2\right).
\]
By Lemma \ref{Lemma 2.1} and (\ref{Full H2 bound HMHD}),
\begin{align*}
\frac{d}{dt}\norm{\Delta\psi}{L^2}^2+\eta\norm{\nabla\Delta\psi}{L^2}^2 &\leq \frac{C}{\eta}\left[1+\ln{\left(\frac{\norm{\nabla\psi}{H^2}^2}{\norm{\Delta \psi}{L^2}^2}\right)}\right]\left(\norm{\Delta B}{L^2}^2+\norm{\nabla u}{L^2}^2\right)\norm{\Delta\psi}{L^2}^2 \\
&\leq \frac{C}{\eta}\left(1+\ln \mathcal{H}_{2}+\ln{\norm{\Delta\psi}{L^2}^{-2}}\right)\left(\norm{\Delta B}{L^2}^2+\norm{\nabla u}{L^2}^2\right)\norm{\Delta\psi}{L^2}^2.
\end{align*}
By setting 
\eqn \label{F HMHD}
F(t)=\norm{\Delta\psi(t)}{L^2}^{2}\exp\left[-\frac{C}{\eta}(1+\ln \mathcal{H}_{2})\int^{t}_{0}\left(\norm{\Delta B(\tau)}{L^2}^2+\norm{\nabla u(\tau)}{L^2}^2\right)\,d\tau\right]
\een
which can be written as 
\[
\frac{d}{dt}F \leq \frac{C}{\eta}\left(\norm{\Delta B}{L^2}^2+\norm{\nabla u}{L^2}^2\right)F\ln{F^{-1}}.
\]
Hence we derive
\[
\frac{d}{dt}\left(\ln \ln F^{-1}\right)\geq -\frac{C}{\eta}\left(\norm{\Delta B}{L^2}^2+\norm{\nabla u}{L^2}^2\right).
\]
Since 
\[
\frac{1}{\eta}\int^t_0\left(\norm{\Delta B(\tau)}{L^2}^2+\norm{\nabla u(\tau)}{L^2}^2\right)\,d\tau\leq \frac{\mathcal{H}_1}{\eta^2}+\frac{\mathcal{E}_0}{\nu\eta}= \mathcal{S}_1
\]
from \eqref{Full H1 bound HMHD} and \eqref{eq:energy-ineq}, $F$ is bounded as 
\[
F(t)\leq \exp{\left[\ln{F_0}e^{-C\mathcal{S}_1}\right]}=F_0^{\exp{(-C\mathcal{S}_1)}}
\]
and so (\ref{F HMHD}) gives 
\[
\sup_{t\in[0,T]}\norm{\Delta\psi(t)}{L^2}^{2}\leq \norm{\Delta\psi_{0}}{L^2}^{2\exp{(-C\mathcal{S}_1)}}e^{C\mathcal{S}_1(1+\ln \mathcal{H}_{2})}.
\]
This completes the proof of Proposition \ref{prop2}.
\end{proof}

\section{Decay rate of $B+\omega$}\label{sec:5}
We now find a weak solution with $B+\omega$ decaying in time when $\nu=\eta>0$. Since $(u_0,B_0,\omega_0)\in L^{2}$ with \eqref{eq:cor-cond2}, the following uniform-in-time bound holds for all $t>0$ by Corollary \ref{Corollary 1.2}:
\eqn \label{B plus omega smallness}
\begin{split}
\norm{(B+\omega)(t)}{L^2}^2+\nu\int^{t}_{0}\norm{\nabla(B+\omega)(\tau)}{L^2}^2\,d\tau \leq \norm{B_0+\omega_0}{L^2}^{2} \exp\left(\frac{C}{\nu^{4}}(\mathcal{E}_0+\mathcal{E}_0^2)\right)=C_{0}.
\end{split}
\een
Therefore, we can find a global weak solution of (\ref{H MHD}) with the energy inequality \eqref{eq:energy-ineq} and \eqref{B plus omega smallness}. Moreover, we can derive decay rates of $\norm{(B+\omega)(t)}{L^2}$ using the refined Fourier splitting method developed in \cite{Bae Jung Shin}.

\begin{theorem} \upshape \label{Theorem 5.1}
Let $\nu=\eta>0$ and let $(u_0,B_0,\omega_0)\in L^{2}(\mathbb{R}^{3})$ satisfy $\dv u_0=\dv  B_0=0$ and \eqref{eq:cor-cond}. There exists a global weak solution $(u,B,\omega)\in L^\infty([0,\infty);L^2(\mathbb{R}^3))\cap L^2([0,\infty);\dot{H}^1(\mathbb{R}^3))$ satisfying \eqref{eq:energy-ineq} and \eqref{B plus omega smallness} for almost every $t\in[0,\infty)$. Furthermore, if
\eqn \label{low frequency B omega}
\sup_{|\xi|\leq 1}|\xi|^{\sigma}|\widehat{B_0+\omega_0}(\xi)| \leq C
\een
for $\sigma\in [-1,1]$, then $B+\omega$ has the following decay rates:
\begin{equation}\label{eq:decay}
\norm{(B+\omega)(t)}{L^2}^2\leq C(1+t)^{-\frac{3}{2}+\sigma}.
\end{equation}
\end{theorem}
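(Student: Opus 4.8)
The plan is to run a (refined) Fourier–splitting argument for $W:=B+\omega$. When $\nu=\eta$, $W$ solves the forced heat equation \eqref{eq:B+omega2}, i.e. $\partial_tW-\nu\Delta W=-u\cdot\nabla W+W\cdot\nabla u=:N$ with $\dv W=\dv u=0$ and no pressure term; since $N_j=\partial_i(W_iu_j-u_iW_j)$, the transform of $N$ obeys the crucial pointwise bound $|\widehat N(\xi,t)|\le C|\xi|\,\norm{u(t)}{L^2}\norm{W(t)}{L^2}$. First I would record the existence of the weak solution: it is produced by the standard Galerkin/compactness scheme for the Hall system, and the a priori estimates \eqref{eq:energy-ineq} and \eqref{B plus omega smallness} (the latter valid because \eqref{eq:cor-cond} is assumed) pass to the limit; in particular the solution inherits $\int_0^\infty\norm{\nabla\omega(t)}{L^2}^2\,dt<\infty$ and hence $\int_0^\infty\norm{u(t)}{\text{BMO}}^2\,dt\le M<\infty$, exactly as in the estimate following Corollary \ref{Corollary 1.1}. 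Testing \eqref{eq:B+omega2} against $W$ and estimating the nonlinearity by \eqref{Hardy 1}--\eqref{BMO 1} as in the proof of Theorem \ref{Theorem 1.1} yields
\[
\frac{d}{dt}\norm{W(t)}{L^2}^2+\nu\norm{\nabla W(t)}{L^2}^2\le\frac{C}{\nu}\norm{u(t)}{\text{BMO}}^2\norm{W(t)}{L^2}^2 .
\]

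Next I would carry out the splitting. Fix $k$ with $\nu k\ge 3$ and set $\rho(t)^2=k/(1+t)$, so $\rho(t)\le 1$ for $t\ge k$ while for $t\le k$ the bound \eqref{eq:decay} is immediate from \eqref{B plus omega smallness}. From $\nu\norm{\nabla W}{L^2}^2\ge\nu\rho^2\big(\norm{W}{L^2}^2-\int_{|\xi|\le\rho}|\widehat W|^2\,d\xi\big)$ and multiplication by the integrating factor $\mu(t):=\exp\!\big(\int_0^t(\nu\rho(s)^2-\tfrac{C}{\nu}\norm{u(s)}{\text{BMO}}^2)\,ds\big)$, which satisfies $c(1+t)^{\nu k}\le\mu(t)\le C(1+t)^{\nu k}$ since $\int_0^\infty\norm{u}{\text{BMO}}^2\le M$, integration in time gives
\[
\norm{W(t)}{L^2}^2\le C(1+t)^{-\nu k}\norm{W_0}{L^2}^2+C(1+t)^{-\nu k}\int_0^t(1+s)^{\nu k-1}\!\int_{|\xi|\le\rho(s)}\!|\widehat W(\xi,s)|^2\,d\xi\,ds .
\]
To bound the inner integral I use the Duhamel identity $\widehat W(\xi,s)=e^{-\nu|\xi|^2s}\widehat{W_0}(\xi)+\int_0^se^{-\nu|\xi|^2(s-\tau)}\widehat N(\xi,\tau)\,d\tau$ (valid for a.e.\ $\xi$ from the scalar ODE for $\widehat W(\xi,\cdot)$), the low-frequency hypothesis \eqref{low frequency B omega} — which for $|\xi|\le 1$ gives $|\widehat{W_0}(\xi)|\le C|\xi|^{-\sigma}$ and hence, since $\sigma<\tfrac32$, $\int_{|\xi|\le\rho(s)}|\widehat{W_0}|^2\le C\rho(s)^{3-2\sigma}\le C(1+s)^{-3/2+\sigma}$ — together with $|\widehat N(\xi,\tau)|\le C|\xi|\,\mathcal{E}_0^{1/2}\norm{W(\tau)}{L^2}$ and $e^{-\nu|\xi|^2(s-\tau)}\le 1$, obtaining
\[
\int_{|\xi|\le\rho(s)}|\widehat W(\xi,s)|^2\,d\xi\le C(1+s)^{-3/2+\sigma}+C(1+s)^{-5/2}\Big(\int_0^s\norm{W(\tau)}{L^2}\,d\tau\Big)^2 .
\]

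Finally, the bootstrap — the ``refined'' ingredient — closes everything. Starting from the crude bound $\norm{W}{L^2}^2\le C_0$ (from \eqref{B plus omega smallness}), the last two displays give $\norm{W(t)}{L^2}^2\le C(1+t)^{-1/2}$; and feeding an estimate $\norm{W(\tau)}{L^2}^2\le C(1+\tau)^{-a}$ into $\int_0^s\norm{W(\tau)}{L^2}\,d\tau\lesssim(1+s)^{\max(1-a/2,\,0)}$ upgrades the decay to the new exponent $a'=\min(\tfrac32-\sigma,\ \tfrac12+a)$. Since $\tfrac32-\sigma\le\tfrac52$ for $\sigma\ge-1$, this iteration stabilizes after at most a handful of steps at $a^\ast=\tfrac32-\sigma$, i.e.\ $\norm{W(t)}{L^2}^2\le C(1+t)^{-3/2+\sigma}$, which is \eqref{eq:decay}; logarithmic factors produced at the threshold exponent $a=2$ are reabsorbed at the next step when $\sigma>-1$, and the endpoint $\sigma=-1$ is treated separately (e.g.\ by an $\varepsilon$-loss argument, or by also using decay of $\norm{u}{L^2}$). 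The main obstacle is precisely making this bootstrap genuinely close — checking that at each stage the nonlinear contribution $C(1+s)^{-5/2}\big(\int_0^s\norm{W(\tau)}{L^2}\,d\tau\big)^2$ decays strictly faster than the target $(1+s)^{-3/2+\sigma}$ once the current rate is inserted, so that $a'>a$ strictly until the ceiling $\tfrac32-\sigma$ is reached; everything else (the Galerkin construction, the weak form of the Duhamel identity and the energy inequality, and the elementary power counting) is routine.
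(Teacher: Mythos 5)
Your proposal is correct and follows essentially the same route as the paper: the differential inequality $\frac{d}{dt}\norm{B+\omega}{L^2}^2+\nu\norm{\nabla(B+\omega)}{L^2}^2\leq \frac{C}{\nu}\norm{u}{\text{BMO}}^2\norm{B+\omega}{L^2}^2$ with $\int_0^\infty\norm{u}{\text{BMO}}^2\,dt<\infty$, a Duhamel bound on the low frequencies of $\widehat{B+\omega}$ using the divergence structure of the nonlinearity, a bootstrap that raises the decay exponent in steps of $\tfrac12$ up to $\tfrac32-\sigma$, and a separate treatment of the endpoint $\sigma=-1$. The only differences are presentational: the paper absorbs the BMO term by passing to $\Gamma=(B+\omega)\exp[-\lambda\int_0^t\norm{u}{\text{BMO}}^2]$ and invokes the pointwise-Fourier splitting lemma of Bae--Jung--Shin as a black box, whereas you fold that exponential into the Fourier-splitting integrating factor and bound the integrated low-frequency energy $\int_{|\xi|\leq\rho}|\widehat{B+\omega}|^2\,d\xi$ directly, which is an equivalent, self-contained implementation of the same method.
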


To prove Theorem \ref{Theorem 5.1}, we use the following Fourier splitting type lemma.

\begin{lemma}\cite[Lemma 2.1]{Bae Jung Shin} \upshape \label{Lemma 5.1}
Let $f$ be a smooth function satisfying 
\eqn \label{condition to decay}
\frac{d}{dt}\|f(t)\|_{L^{2}}^2 + \nu\|\nabla f(t)\|_{L^{2}}^2 \le 0
\een
for all $t>0$. Suppose there exists a positive constant $C_{\ast}>0$ and $\sigma < \frac 32$ such that
\[
\sup_{0\leq t<\infty}\sup_{|\xi|\leq 1}|\xi|^\sigma \left|\widehat f(t,\xi)\right|  \le C_{\ast}.
\]
Then, for all $t\geq N-1$, where $N-1$ is a non-negative constant with $N>\frac{3}{2}-\sigma$, 
\[
\|f(t)\|_{L^{2}}^2  \le C(1+t)^{-\frac{3}{2}+\sigma}
\]
\end{lemma}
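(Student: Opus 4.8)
The plan is to run Schonbek's Fourier splitting method in the refined form that exploits the low-frequency hypothesis directly. First I would rewrite the dissipation by Plancherel, $\|\nabla f\|_{L^2}^2 = \int |\xi|^2 |\hat f(t,\xi)|^2\,d\xi$, and split frequency space into the ball $\{|\xi|\le R(t)\}$ and its complement, where $R(t)$ is a radius to be chosen. On the complement $|\xi|^2 \ge R(t)^2$, so
\[
\nu\|\nabla f\|_{L^2}^2 \ge \nu R(t)^2\Big(\|f\|_{L^2}^2 - \int_{|\xi|\le R(t)} |\hat f|^2\,d\xi\Big),
\]
and inserting this into the hypothesis \eqref{condition to decay} gives the pointwise-in-time differential inequality
\[
\frac{d}{dt}\|f\|_{L^2}^2 + \nu R(t)^2\|f\|_{L^2}^2 \le \nu R(t)^2 \int_{|\xi|\le R(t)} |\hat f(t,\xi)|^2\,d\xi.
\]

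Next I would estimate the low-frequency integral using the uniform bound $|\hat f(t,\xi)|\le C_* |\xi|^{-\sigma}$ for $|\xi|\le 1$. Taking $R(t)\le 1$ and passing to polar coordinates in $\mathbb{R}^3$,
\[
\int_{|\xi|\le R(t)} |\hat f|^2\,d\xi \le C_*^2 \int_{|\xi|\le R(t)} |\xi|^{-2\sigma}\,d\xi = C\,C_*^2 \int_0^{R(t)} r^{2-2\sigma}\,dr = \frac{C\,C_*^2}{3-2\sigma}\,R(t)^{3-2\sigma},
\]
where the hypothesis $\sigma<\tfrac32$ is exactly what makes the radial integral converge at the origin. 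Substituting back yields the autonomous-looking inequality $\frac{d}{dt}\|f\|_{L^2}^2 + \nu R(t)^2\|f\|_{L^2}^2 \le C\nu R(t)^{5-2\sigma}$.

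I would then calibrate $R(t)$ so that $\nu R(t)^2 = N/(1+t)$; the constraint $R(t)\le 1$ determines the admissible range of $t$ (which, after the normalization built into $N$, is the stated $t\ge N-1$), while the right-hand side becomes $C(1+t)^{-(5-2\sigma)/2}$. Multiplying by the integrating factor $(1+t)^N$ and integrating from $t_0=N-1$ to $t$,
\[
(1+t)^N\|f(t)\|_{L^2}^2 \le (1+t_0)^N\|f(t_0)\|_{L^2}^2 + C\int_{t_0}^{t} (1+s)^{\,N-(5-2\sigma)/2}\,ds.
\]
Here the condition $N>\tfrac32-\sigma$ is precisely what keeps the integrand exponent $N-(5-2\sigma)/2 = N-\tfrac52+\sigma$ above $-1$, so that the time integral grows like $(1+t)^{N-3/2+\sigma}$; dividing through by $(1+t)^N$ gives the claimed rate $\|f(t)\|_{L^2}^2 \le C(1+t)^{-3/2+\sigma}$, the contribution of the initial term being $O\big((1+t)^{-N}\big)$, which is subdominant since $N>\tfrac32-\sigma$.

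The argument is otherwise routine; the one point demanding care is the simultaneous bookkeeping of the thresholds on $\sigma$ and $N$. Integrability of the low-frequency piece forces $\sigma<\tfrac32$, and independently the positivity of the post-integration exponent forces $N>\tfrac32-\sigma$; both must be reconciled with the constraint $R(t)\le 1$, which is what pins down the admissible time window $t\ge N-1$ and couples the free parameter $N$ to the decay exponent. Keeping these three constraints mutually consistent—rather than any single estimate—is the main thing to get right.
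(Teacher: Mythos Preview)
Your argument is correct and is precisely the refined Fourier splitting method one expects here. Note that the paper does not supply its own proof of this lemma---it is quoted verbatim from \cite[Lemma 2.1]{Bae Jung Shin}---so there is no in-paper argument to compare against; your write-up is essentially the standard proof behind that citation.
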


In the proof of Theorem \ref{Theorem 5.1}, we also use the following two inequalities. For all $x>0$ and $p>0$
\begin{equation}\label{heat_inf_est}
|x^p e^{-ax^2}| \le  |x^p e^{-ax^2}| \big|_{x = \sqrt{\frac{p}{2a}}} = \left(\frac{p}{2a} \right)^{\frac p2} e^{-\frac p2}.
\end{equation} 
For $0<\alpha,\beta<1$ with $\alpha+\beta=1$
\eqn \label{Beta inequlaity}
\int^{t}_{0}(t-\tau)^{-\alpha}\tau^{-\beta}d\tau=\int_0^1 (1-\theta)^{-\alpha}\theta^{-\beta}\,d\theta = \mathcal{B}\left(\alpha,\beta\right),
\een
where $\mathcal{B}(\cdot, \cdot)$ is the beta function.

\subsection{Proof of Theorem \ref{Theorem 5.1}}
Although we have a uniform bound (\ref{B plus omega smallness}), $B+\omega$ does not satisfies (\ref{condition to decay}). To overcome this difficulty, we use 
\[
\Gamma(t,x)=(B+\omega)(t,x)\exp\left[-\lambda\int^{t}_{0}\norm{u(\tau)}{\text{BMO}}^2\,d\tau\right],
\]
where $\lambda>0$ will be chosen later. Then, from \eqref{eq:B+omega2}, we derive the equation of $\Gamma$: 
\[
\partial_t \Gamma+\lambda\norm{u(t)}{\text{BMO}}^2\Gamma-\nu\Delta\Gamma+(u\cdot\nabla)\Gamma=(\Gamma\cdot\nabla)u.
\]
By using (\ref{Hardy 1}) and (\ref{BMO 1}), 
\begin{align*}
\frac{1}{2}\frac{d}{dt}\norm{\Gamma}{L^2}^2 &+\lambda\norm{u}{\text{BMO}}^2\norm{\Gamma}{L^2}^2+\nu\norm{\nabla\Gamma}{L^2}^2  =\int \left(\Gamma\cdot\nabla u\right)\cdot \Gamma =-\int(\Gamma\cdot\nabla\Gamma)\cdot u \\
&\leq \norm{u}{\text{BMO}}\norm{\Gamma\cdot\nabla\Gamma}{\mathcal{H}}\leq C\norm{u}{\text{BMO}}\norm{\Gamma}{L^2}\norm{\nabla\Gamma}{L^2}\leq \frac{\nu}{2}\norm{\nabla\Gamma}{L^2}^2+\frac{C}{\nu}\norm{u}{\text{BMO}}^2\norm{\Gamma}{L^2}^2 .
\end{align*}
By setting $\lambda=C\nu^{-1}$, we obtain 
\[
\frac{d}{dt}\norm{\Gamma}{L^2}^2+\nu\norm{\nabla\Gamma}{L^2}^2\leq0.
\]
To show the decay rates of $B+\omega$ using $\Gamma$, we bound the exponential term in $\Gamma$. Since 
\[
\begin{split}
\int^{t}_{0}\norm{u(\tau)}{\text{BMO}}^2\,d\tau &\leq C \int^{t}_{0}\norm{\nabla u(\tau)}{L^2}\norm{\nabla \omega(\tau)}{L^2}\,d\tau \\
&\leq \frac{C}{\nu}\left(C_{0}+\left\|u_{0}\right\|^{2}_{L^{2}} +\left\|B_{0}\right\|^{2}_{L^{2}}\right)^{\frac{1}{2}} \left(\left\|u_{0}\right\|^{2}_{L^{2}} +\left\|B_{0}\right\|^{2}_{L^{2}}\right)^{\frac{1}{2}},
\end{split}
\]
where $C_{0}$ is defined in (\ref{B plus omega smallness}), we find  
\[
\exp\left[\frac{C}{\nu}\int^{t}_{0}\norm{u(\tau)}{\text{BMO}}^2\,d\tau\right]\leq \exp \left[\frac{C}{\nu^{2}}\left(C_{1}+\left\|u_{0}\right\|^{2}_{L^{2}} +\left\|B_{0}\right\|^{2}_{L^{2}}\right)^{\frac{1}{2}} \left(\left\|u_{0}\right\|^{2}_{L^{2}} +\left\|B_{0}\right\|^{2}_{L^{2}}\right)^{\frac{1}{2}}\right]=C_{\ast}
\]
from which we have the point-wise bound of $\Gamma$ and $B+\omega$:
\[
|\Gamma(t,x)|\leq |(B+\omega)(t,x)| \leq C_{\ast}|\Gamma(t,x)|
\]
and thus we have 
\eqn \label{Gamma B omega L2}
\norm{\Gamma(t)}{L^2}\leq \norm{(B+\omega)(t)}{L^2} \leq C_{\ast}\norm{\Gamma(t)}{L^2} .
\een
Moreover, since the exponential term in $\Gamma$ does not depend on the spatial variables, 
\eqn \label{Gamma B omega}
\sup_{|\xi|\leq 1}|\xi|^{\sigma}|\widehat{\Gamma}(t,\xi)|\leq \sup_{|\xi|\leq 1}|\xi|^{\sigma}|\widehat{B+\omega}(t,\xi)|\leq C_{\ast}\sup_{|\xi|\leq 1}|\xi|^{\sigma}|\widehat{\Gamma}(t,\xi)|.
\een

To derive decay rates of $B+\omega$, we express the equation of $B+\omega$ using the Fourier transform:
\[
\widehat{B+\omega}(t,\xi)= e^{-\nu t|\xi|^2}\widehat{B_0+\omega_0}(\xi)+\int^{t}_{0}e^{-\nu(t-\tau)|\xi|^{2}}i\xi\cdot\mathcal{F}\left(-u\otimes(B+\omega)+(B+\omega)\otimes u\right)(\tau,\xi)\,d\tau.
\]
From now on, we suppress the $\nu$-dependance on the bounds of solutions for simplicity. 

\vspace{1ex}

\noindent
$\blacktriangleright$ Suppose (\ref{low frequency B omega}) holds with $\sigma=1$. Then for $|\xi|\leq 1$,
\begin{align*}
|\xi||\widehat{B+\omega}(t,\xi)| &\leq |\xi||\widehat{B_0+\omega_0}(\xi)|+\int^{t}_{0}e^{-\nu(t-\tau)|\xi|^{2}}|\xi|^2|\mathcal{F}\left(-u\otimes(B+\omega)+(B+\omega)\otimes u\right)(\tau,\xi)|\,d\tau \\
&\leq |\xi||\widehat{B_0+\omega_0}(\xi)|+C\sup_{\tau\in[0,t]}\norm{u(\tau)}{L^2}\norm{(B+\omega)(\tau)}{L^2}\leq C.
\end{align*}
Hence, by Lemma \ref{Lemma 5.1} with \eqref{Gamma B omega L2} and (\ref{Gamma B omega}),  we derive 
\eqn \label{eq:decay1}
\norm{\Gamma(t)}{L^2}^2\leq C(1+t)^{-\frac{1}{2}}, \quad \norm{(B+\omega)(t)}{L^2}^2\leq C(1+t)^{-\frac{1}{2}}.
\een

\noindent
$\blacktriangleright$ We now handle the case $\sigma\in (-1,1)$. We note that  when $\sigma_1\geq\sigma_2$,
\begin{equation} \label{low frequency embedding}
\sup_{|\xi|\leq 1}|\xi|^{\sigma_1}|\widehat{B_0+\omega_0}(\xi)| \leq \sup_{|\xi|\leq 1}|\xi|^{\sigma_2}|\widehat{B_0+\omega_0}(\xi)|.
\end{equation}
Hence, if (\ref{low frequency B omega}) holds for $\sigma\in(-1,1)$, $\norm{(B+\omega)(t)}{L^2}$ decays as \eqref{eq:decay1}. By (\ref{heat_inf_est}), for $|\xi|\leq1$,
\begin{align*}
|\xi|^{\sigma}|\widehat{B+\omega}(t,\xi)| &\leq |\xi|^{\sigma}|\widehat{B_0+\omega_0}(\xi)|+\int^{t}_{0}e^{-\nu(t-\tau)|\xi|^{2}}|\xi|^{1+\sigma} \norm{u(\tau)}{L^2}\norm{(B+\omega)(\tau)}{L^2}\,d\tau \\
&\leq |\xi|^{\sigma}|\widehat{B_0+\omega_0}(\xi)|+C\int^{t}_{0} (t-\tau)^{-\frac{1+\sigma}{2}}(1+\tau)^{-\frac{1}{4}}\,d\tau.
\end{align*}
When $\sigma\in [\frac{1}{2},1)$, the time integral part is estimated by using (\ref{Beta inequlaity})
\[
\int_0^t (t-\tau)^{-\frac{1+\sigma}{2}}(1+\tau)^{-\frac 14}\,d\tau\le \int_0^t (t-\tau)^{-\frac{1+\sigma}{2}}(1+\tau)^{-\frac {1-\sigma}{2}}\,d\tau\le \mathcal{B}\left(\frac{1-\sigma}{2}, \frac{1+\sigma}{2} \right).
\]
So, Lemma \ref{Lemma 5.1} with \eqref{Gamma B omega L2} and \eqref{Gamma B omega} implies \eqref{eq:decay} for $\sigma \in [\frac{1}{2},1)$. Next, suppose \eqref{low frequency B omega} holds for $\sigma\in[0,\frac{1}{2})$. Then, from \eqref{low frequency embedding} and \eqref{eq:decay} with $\sigma=\frac{1}{2}$, 
\[
\norm{(B+\omega)(t)}{L^2}^2\leq C(1+t)^{-1}.
\]
Following the same computation above, we obtain
\[
|\xi|^{\sigma}|\widehat{B+\omega}(t,\xi)|\leq |\xi|^{\sigma}|\widehat{B_0+\omega_0}(\xi)|+C\int^{t}_{0}(t-\tau)^{-\frac{1+\sigma}{2}}(1+\tau)^{-\frac{1}{2}}\,d\tau\leq C
\]
when $\sigma\in [0,\frac{1}{2})$ from which \eqref{eq:decay} holds. Repeating this process to $\sigma\in [-\frac{1}{2},0)$ and $\sigma\in(-1,-\frac{1}{2})$, we conclude that \eqref{eq:decay} holds when $\sigma\in(-1,1)$.

\vspace{1ex}

\noindent
$\blacktriangleright$ We finally deal with the case  $\sigma=-1$. Then, from \eqref{low frequency embedding} and \eqref{eq:decay} for $\sigma\in(-1,-\frac{1}{2})$, we have
\[
\norm{(B+\omega)(t)}{L^2}^2\leq C(1+t)^{-2(1+\kappa)}
\]
for any $\kappa\in (0,\frac{1}{4})$. Using this, for $|\xi|\leq1$,
\begin{align*}
|\xi|^{-1}|\widehat{B+\omega}(t,\xi)| &\leq |\xi|^{-1}|\widehat{B_0+\omega_0}(\xi)|+\int^{t}_{0}e^{-\nu(t-\tau)|\xi|^{2}}\norm{u(\tau)}{L^2}\norm{(B+\omega)(\tau)}{L^2}\,d\tau \\
&\leq |\xi|^{-1}|\widehat{B_0+\omega_0}(\xi)|+C\int^{t}_{0} (1+\tau)^{-1-\kappa}\,d\tau \leq C.
\end{align*}
By Lemma \ref{Lemma 5.1} with \eqref{Gamma B omega L2} and (\ref{Gamma B omega}), we complete the proof of Theorem \ref{Theorem 5.1}.

\begin{remark}\upshape
Theorem \ref{Theorem 5.1} implies that if $B_0+\omega_0$ satisfies  \eqref{low frequency B omega}, $\norm{(B+\omega)(t)}{L^2}$ decays as \eqref{eq:decay} even though $(u,B,\omega)$ does not decay in time. In contrast, \cite[Theorem 1.3]{Bae Jung Shin} indicates that if 
\[
\sup_{|\xi|\leq 1}|\xi|^{\sigma_1}|\widehat{u}_0(\xi)|\leq C,\quad \sup_{|\xi|\leq 1}|\xi|^{\sigma_2}|\widehat{B}_0(\xi)|\leq C
\]
for $\sigma_1\in[-1,1]$ and $\sigma_2\in[-1,0]$, then 
$(u,B,\omega)$ decay as follows:
\[
\norm{u(t)}{L^2}^2\leq C(1+t)^{-\frac{3}{2}+\sigma_1},\quad \norm{B(t)}{L^2}^2\leq C(1+t)^{-\frac{3}{2}+\sigma_2},\quad \norm{\omega(t)}{L^2}^2\leq C(1+t)^{-\frac{5}{2}+\sigma_1}.
\]
\end{remark}

\section*{Acknowledgments}

H. Bae was supported by the National Research Foundation of Korea (NRF) grant funded by the Korea government (MSIT) (2022R1A4A1032094 and RS-2024-00341870). 

K. Kang was supported by the National Research Foundation of Korea(NRF) grant funded by the Korea government (MSIT) (RS-2024-00336346 and RS-2024-00406821). 

\section*{Data Availability}

Data sharing is not applicable to this article as no datasets were created or analyzed in this study.

\section*{Declarations}

\textbf{Conflict of interest} The authors declare that there is no conflict of interest.

\end{document}